\newcommand{\tr}{{\rm tr}\,}
\newcommand{\bmat}{\left[ \begin{array}}
\newcommand{\emat}{\end{array} \right]}
\newcommand{\ignore}[1]{}
\newtheorem{lemmaA}{Lemma}
\newtheorem{lemmaB}{Lemma}
\theoremstyle{definition}
\newtheorem*{unprop}{Proposition}
\newtheorem*{unconjecture}{Conjecture}
\theoremstyle{remark}
\numberwithin{equation}{section}
\begin{document}

\title{Hypergeometric Functions I}

\author{Ian G. Macdonald}

\address{56 High Street, Steventon, Oxfordshire OX13 6RS, England} 

\date{\today}


\maketitle

\tableofcontents

\section*{Foreword}
This is the typewritten version of a handwritten manuscript which was
completed by Ian G. Macdonald in 1987 or 1988. The manuscript is a very
informal working paper, never intended for formal publication.
Nevertheless, copies of the manuscript have circulated widely, giving rise
to quite a few citations in the subsequent 25 years. Therefore it seems
justified to make the manuscript available for the whole mathematical
community. The author kindly gave his permission that a typewritten version be posted on arXiv.
These notes were typeset verbatim by Tierney Genoar and Plamen Koev, supported by the San Jose State University Planning Council and National Science Foundation Grant DMS-1016086.
The manuscript is followed by ``Hypergeometric functions II ($q$-analogues).''
\newpage

\section{}
Hypergeometric functions $_pF_q$ on the space $\Sigma_n$ of real $n\times n$ 
symmetric matrices were introduced by Herz \cite{Herz55}. His definition was 
inductive; he started from
$$
_0F_0(s)=\exp(\tr(s)), \quad (s\in \Sigma_n)
$$
and used a Laplace transform (resp.\ inverse Laplace transform) to pass from 
$_pF_q$ to $_{p+1}F_q$ (resp.\ to $_pF_{q+1}$). Subsequently, Constantine \cite{Constantine63} showed that $_pF_q$ could be expanded naturally as a series of zonal 
polynomials, and we shall take this series as our definition.

Let $C_\lambda$ ($\lambda$ a partition of length $\le n$) denote the zonal 
polynomial indexed by $\lambda$, normalized so that
\begin{equation}
\sum_{\lambda\vdash m} C_\lambda=p_1^m
\label{eq_1.1}
\end{equation}
for all $m\ge 0$. Since the coefficient of $p_1^m$ in 
$J_\lambda=J_\lambda(x;2) (=Z_\lambda(x))$ is 1 for all $\lambda\vdash m$, 
we have
$$
\langle J_\lambda,C_\lambda \rangle_2=
\langle J_\lambda,p_1^m \rangle_2=
\langle p_1^m,p_1^m \rangle_2=2^mm!
$$
so that
\begin{equation}
C_\lambda=2^mm! J_\lambda^*,
\label{eq_1.2}
\end{equation}
where ($J_\lambda^*$) is the basis of $\Lambda$ dual to $(J_\lambda)$, i.e., 
$J_\lambda^*=J_\lambda/|J_\lambda|_2^2$.

If $\lambda=(\lambda_1,\ldots,\lambda_n)$ is a partition of length $\le 
n$, define
\begin{equation}
(a)_\lambda=\prod_{i=1}^n (a-\tfrac{1}{2}(i-1))_{\lambda_i}
\end{equation}
and if $\underline{a}=(a_1,\ldots,a_p)$, define
\begin{equation}
(\underline{a})_\lambda=(a_1)_\lambda\cdots (a_p)_\lambda.
\end{equation}

With this notation established, we define
\begin{eqnarray}
_pF_q(\underline{a};\underline{b};s)&=&\sum_\lambda 
\frac{(\underline{a})_\lambda}{(\underline{b})_\lambda}\cdot
\frac{C_\lambda(s)}{|\lambda|!}
\label{eq_1.5}
\\
_pF_q(\underline{a};\underline{b};s,t)&=&\sum_\lambda 
\frac{(\underline{a})_\lambda}{(\underline{b})_\lambda}\cdot
\frac{C_\lambda(s)C_\lambda(t)}{C_\lambda(1_n)|\lambda|!}.
\label{eq_1.6}
\end{eqnarray}
Here $\underline{a}=(a_1,\ldots,a_p)$ and 
$\underline{b}=(b_1,\ldots,b_q)$ are sequences of lengths $p, q$, 
respectively, and $s,t\in \Sigma_n$.

The functions $C_\lambda$ on $\Sigma_n$ are invariant under the action of 
$K=O(n)$, i.e., 
$$
C_\lambda(s)=C_\lambda(ksk')
$$
for $s\in\Sigma_n$ and $k\in K$.

If $s,t\in \Sigma_n^+$, the cone of positive definite matrices in 
$\Sigma_n$, $s$ and $t$ have unique positive square roots, $s^{1/2}, 
t^{1/2}$ (reduce to diagonal form and take the positive square roots of 
the eigenvalues), and we define
$$
C_\lambda(st)=C_\lambda(s^{1/2}ts^{1/2})=C_\lambda(t^{1/2}st^{1/2}).
$$
The doubling principle for zonal spherical functions then gives
\begin{equation}
\int_K C_\lambda(sktk') \mathrm{d} k = 
\frac{C_\lambda(s)C_\lambda(t)}{C_\lambda(1_n)},
\label{eq_1.7}
\end{equation}
where d$k$ is normalized Haar measure on $K=O(n)$.

From \eqref{eq_1.7} and the definitions \eqref{eq_1.5}, \eqref{eq_1.6} it 
follows that 
\begin{equation}
\int_K \mbox{}_pF_q(\underline{a};\underline{b};sktk') \mathrm{d}k =
\mbox{}_pF_q(\underline{a};\underline{b};s,t).
\label{eq_1.8}
\end{equation}

\underline{Remark}. In view of \eqref{eq_1.2} we can rewrite the 
definitions \eqref{eq_1.5}, \eqref{eq_1.6} in terms of $J_\lambda^*$:
\begin{align}
_pF_q(\underline{a};\underline{b};s)&=\sum_\lambda 
\frac{(\underline{a})_\lambda}{(\underline{b})_\lambda} 2^{|\lambda|}J^*_\lambda(s),
\label{eq_1.5p}
\tag{\ref{eq_1.5}$'$}
\\
_pF_q(\underline{a};\underline{b};s,t)&=\sum_\lambda 
\frac{(\underline{a})_\lambda}{(\underline{b})_\lambda} 2^{|\lambda|}
\frac{J^*_\lambda(s)J^*_\lambda(t)}{J^*_\lambda(1_n)}.
\label{eq_1.6p}
\tag{\ref{eq_1.6}$'$}
\end{align}

Notice also that 
\begin{equation}
_pF_q(\underline{a};\underline{b};s,1)=
\mbox{}_pF_q(\underline{a};\underline{b};s).
\label{eq_1.9}
\end{equation}
\newpage

\section{Particular cases}
\begin{equation}
_0F_0(s)=\exp(\tr s), \quad (s\in \Sigma_n).
\label{eq_2.1}
\end{equation}
\begin{proof}
From the definition and \eqref{eq_1.1} we have
$$
_0F_0(s)=\sum_\lambda \frac{C_\lambda(s)}{|\lambda|!}=
\sum_{m\ge 0} \frac{({\rm trace}\; s)^m}{m!}=e^{{\rm trace}(s)}.
$$
\end{proof}

\begin{equation}
_1F_0(a;s)=|1-s|^{-a},
\label{eq_2.2}
\end{equation}
where $|1-s|=\det(1-s)$.
\begin{proof}
From the definition \eqref{eq_1.5} we have
\begin{eqnarray*}
_1F_0(a;s) &=& \sum_\lambda\frac{(a)_\lambda C_\lambda(s)}{|\lambda|!} \\
&=& \sum_\lambda 2^{|\lambda|}(a)_\lambda J_\lambda^*(s)
\end{eqnarray*}
by \eqref{eq_1.2}. Now if $\varepsilon_X$ is the specialization $p_r 
\mapsto X$ (all $r\ge 1$) (so that $\varepsilon_n(f)=f(1_n)$) we have
\begin{eqnarray*}
\varepsilon_X(J_\lambda) & = & 
\prod_{(i,j)\in\lambda} (X+2(j-1)-(i-1))
\\
&=&2^{|\lambda|}\prod_{(i,j)\in\lambda} (\tfrac{1}{2}(X-i+1)+(j-1))
\\
&=&2^{|\lambda|}\prod_{i=1}^n 
(\tfrac{1}{2}X-\tfrac{1}{2}(i-1))_{\lambda_i},
\end{eqnarray*}
i.e.,
\begin{equation}
\varepsilon_X(J_\lambda)=2^{|\lambda|}(\tfrac{1}{2}X)_\lambda
\label{eq_2.3}
\end{equation}
and therefore (if $x={\rm diag}(x_1,\ldots,x_n)$ is a diagonal matrix)
\begin{eqnarray*}
_1F_0(a;x) 
&=& \varepsilon_{2a}^{(y)}\sum_\lambda J_\lambda(y) J_\lambda^*(x)
\\
&=& \varepsilon_{2a}^{(y)}\prod_{i,j=1}^n(1-x_iy_j)^{-1/2}
\\
&=& \prod_{i=1}^n(1-x_i)^{-a}
\\
&=& |1-x|^{-a}.
\end{eqnarray*}
\end{proof}

\begin{equation}
_0F_1(\tfrac{1}{2}r;s)=\int_K \exp \tr(2xk) \mathrm{d}k. \quad (s=xx')
\label{eq_2.4}
\end{equation}

\begin{proof}
We have
$$
(\tr(2xk))^m = \sum_{\mu\vdash m} \chi_{(1^m)}^\mu s_\mu(2xk)
$$
so that
\begin{eqnarray*}
\int_K \exp(\tr(2xk))\mathrm{d}k 
&=& \sum_{m\ge 0} \frac{1}{m!}
\sum_{\mu\vdash m} \chi_{(1^m)}^\mu 2^{|\mu|} \int_K s_\mu(xk) \mathrm{d}k
\\
&=& \sum_{m\ge 0} \frac{2^{2m}}{(2m)!}\sum_{\lambda\vdash m} 
\chi_{(1^{2m})}^{2\lambda} \Omega_\lambda(x),
\end{eqnarray*}
where
$\Omega_\lambda(x)=J_\lambda(xx')/J_\lambda(1_n)$ is the normalized 
spherical function (on $G/K$).

Now 
$$
\frac{1}{(2m)!} \chi_{(1^{2m})}^{2\lambda} = \frac{1}{h(2\lambda)},
$$
where $h(2\lambda)$ is the product of the hook lengths of $2\lambda$, and
$$
h(2\lambda)=|J_\lambda|_2^2.
$$
Hence, if $s=xx'$, we have
$$
\int_K \exp(\tr 2xk) \mathrm{d}k = \sum_\lambda 
\frac{2^{2|\lambda|}J_\lambda^*(s)}{J_\lambda(1_n)}
=\sum_\lambda
\frac{2^{|\lambda|}J_\lambda^*(s)}{(\frac{1}{2}n)_\lambda}
$$
by \eqref{eq_2.3}. By \eqref{eq_1.5p} this last sum is equal to 
$_0F_1(\frac{1}{2}n;s)$.
\end{proof}

We remark that $_0F_1(b;s)$ is essentially a generalized Bessel function 
(Herz \cite{Herz55}).

When $n=1$, \eqref{eq_2.3} gives 
$$
_0F_1(\tfrac{1}{2};x^2)=\sum_{r\ge 0} \frac{x^{2r}}{(\tfrac{1}{2})_rr!}
=\sum_{r\ge 0} \frac{(2x)^{2r}}{(2r)!}={\rm ch}\, 2x
$$
and (since $K=O(1)=\{\pm 1\}$)
$$
\int_K e^{2xk} \mathrm{d}k = \frac{1}{2}(e^{2x}+e^{-2x}) = {\rm ch}\, 2x.
$$

From \eqref{eq_1.8} and \eqref{eq_2.1}, \eqref{eq_2.2} we have
\begin{eqnarray}
_0F_0(s,t)&=&\int_K \exp \tr(sktk') \mathrm{d}k,
\label{eq_2.5}
\\
_1F_0(a;s,t)&=&\int_K |1-sktk'|^{-a} \mathrm{d}k.
\label{eq_2.6}
\end{eqnarray}

Hence
\begin{eqnarray*}
_0F_0(s,1+t)&=&\int_K \exp \tr (sk(1+t)k')\mathrm{d}k
\\
&=&
e^{\tr(s)}\int_K \exp \tr (sktk')\mathrm{d}k,
\end{eqnarray*}
i.e., 
\begin{equation}
_0F_0(s,1+t)=e^{\tr(s)}\mbox{}_0F_0(s,t).
\label{eq_2.7}
\end{equation}

Again,
\begin{eqnarray*}
_1F_0(a;s,1+t)&=&\int_K |1-sk(1+t)k'|^{-a} \mathrm{d}k
\\
&=&
|1-s|^{-a}\int_K \left|1-\frac{s}{1-s}ktk'\right|^{-a} \mathrm{d}k
\end{eqnarray*}
so that 
\begin{equation}
_1F_0(a;s,1+t)=\mbox{}_1F_0(a;s)\mbox{ } _1F_0(a; \tfrac{s}{1-s},t).
\label{eq_2.8}
\end{equation}
\newpage

\section{Integral formulae}

As before, let $\Sigma_n^+$ (or just $\Sigma^+$) denote the cone of 
positive definite $n\times n$ real symmetric matrices. We take as measure 
on $\Sigma^+$
\begin{equation}
\mathrm{d}s = c_n \prod_{i\le j}\mathrm{d}s_{ij},
\label{eq_3.1}
\end{equation}
where $s=(s_{ij})_{1\le i,j\le n}$ and 
$c_n=\pi^{-n(n-1)/4}$. (This constant is built into the measure $\mathrm{d}s$ in 
order to prevent it appearing everywhere else.) Also define
\begin{equation}
\Gamma_n(a) = \prod_{i=1}^n \Gamma(a-\tfrac{1}{2}(i-1)).
\label{eq_3.2}
\end{equation}

The basic integral formula, from which all else follows, is then
\begin{equation}
\int_{\Sigma^+} e^{-\tr(st)} |s|^{a-p} \mathrm{d}s = |t|^{-a} \Gamma_n(a),
\label{eq_3.3}
\end{equation}
where $p=\frac{1}{2}(n+1)$ and $|s|=\det(s)$.

\begin{proof}
We first reduce to the case $t=1_n$. Let 
$$
\mathrm{d}^*s = |s|^{-p} \mathrm{d}s,
$$
then $\mathrm{d}^*s$ is a $G$-invariant measure on $\Sigma$, where $G={\rm 
GL}_n(\mathbb{R})$. That is to say, we have
$$
\mathrm{d}^*(xsx')=\mathrm{d}^*x \quad (x\in G, s\in \Sigma).
$$

Take $x=t^{1/2}$, the positive square root of $t$. Then the left hand side 
of \eqref{eq_3.3} is 
\begin{eqnarray*}
\int_{\Sigma^+} e^{-\tr(xsx')} |s|^a \mathrm{d}^*s
&=&
\int_{\Sigma^+} e^{-\tr s} |x^{-1}s(x^{-1})'|^a \mathrm{d}^*s \\
&=&|t|^{-a}\int_{\Sigma^+} e^{-\tr(s)}|s|^{a-p} \mathrm{d}s.
\end{eqnarray*}
We have to evaluate this integral. For this purpose we invoke rational 
reduction of quadratic forms to write $s=y'y$ (for almost all $s\in 
\Sigma^+$) with $y\in G$ upper triangular, say $y=ux$ where $u=(u_{ij})$ 
is upper unitriangular and 
$x={\rm diag}(x_1,\ldots,x_n)$ has positive entries $x_i$. Let
$$
\mathrm{d}x=\prod_{i=1}^n \mathrm{d}x_i, \quad  \mathrm{d}u = \prod_{i<j} \mathrm{d}u_{ij};
$$
then the decomposition $s=(ux)'(ux)$ gives
$$
\mathrm{d}^* s = 2^n c_n |x|^{-1} \mathrm{d}x\mathrm{d}u
$$
so that
$$
|s|^{a-p} \mathrm{d}s= 2^n c_n |x|^{2a-1} \mathrm{d}x\mathrm{d}u.
$$

We have 
$$
{\rm trace}(s)=\sum_{k\le i} x_i^2 u_{ki}^2
$$
and hence
$$
\int_{\Sigma^+}
e^{-\tr(s)}|s|^{a-p}\mathrm{d}s = 
2^nc_n \int \exp(-\sum x_i^2u_{ki}^2)\cdot |x|^{2a-1}\mathrm{d}x\mathrm{d}u,
$$
wherein each $u_{ij}$ (resp.\ $x_i$) is integrated over $\mathbb R$
(resp.\ $\mathbb R^+$).

Integrate first over $U$: Since
$$
\int_{-\infty}^\infty e^{-x^2u^2} \mathrm{d}u = \pi^{1/2} x^{-1}
$$
we obtain (since $c_n=\pi^{-n(n-1)/4}$)
\begin{eqnarray*}
2^n\left(\int_0^\infty \right)^n e^{-\sum x_i^2}
\prod_{k<i} x_i^{-1} \prod_{i=1}^n x_i^{2a-1} \mathrm{d}x
&=&
\prod_{i=1}^n \left(2\int_0^\infty e^{-x_i^2}x_i^{2a-i} \mathrm{d}x_i \right)
\\
&=&
\prod_{i=1}^n \left(\int_0^\infty e^{-x}x^{a-\frac{1}{2}(i-1)} 
\frac{\mathrm{d}x}{x} \right)
\\
&=&
\prod_{i=1}^n \Gamma(a-\tfrac{1}{2}(i-1)) = \Gamma_n(a).
\end{eqnarray*}
\end{proof}

Next, replace $t$ by $1_n-t$ in \eqref{eq_3.3}, where $0<t<1_n$ (for the 
partial order on $\Sigma$ defined by 
$$
s\le t\quad\mbox{ iff $t-s$ is positive semidefinite)}.
$$

We have then
\begin{equation}
\int_{\Sigma^+} e^{-\tr(s(1-t))} |s|^{a-p} \mathrm{d}s = |1-t|^{-a} \Gamma_n(a).
\label{eq_3.4}
\end{equation}
But, on the other hand,
\begin{eqnarray*}
\int_{\Sigma^+} e^{-\tr(s(1-t))} |s|^{a-p} \mathrm{d}s &=&
\int_{\Sigma^+} e^{-\tr(s)} |s|^{a-p} \left(\int_K e^{\tr(ksk't)} \mathrm{d}k 
\right)\mathrm{d}s 
\\
&&\mbox{(by replacing $s$ by $ksk'$ and integrating over $K$)}
\\
&=&\int_{\Sigma^+} e^{-\tr(s)} |s|^{a-p} \left(\int_K 
\sum_\lambda\frac{C_\lambda(ksk't)}{|\lambda|!} \mathrm{d}k 
\right)\mathrm{d}s 
\\
&=&\int_{\Sigma^+} e^{-\tr(s)} |s|^{a-p} \left(\sum_\lambda 
\Omega_\lambda(s) \frac{C_\lambda(t)}{|\lambda|!} 
\right)\mathrm{d}s.
\end{eqnarray*}
Moreover, we have
$$
(1-t)^{-a} = \sum_\lambda (a)_\lambda \frac{C_\lambda(t)}{|\lambda|!}
$$
so that from \eqref{eq_3.4} and these calculations we obtain
$$
\int_{\Sigma^+} e^{-\tr(s)} |s|^{a-p} \Omega_\lambda(s) 
\mathrm{d}s = (a)_\lambda \Gamma_n(a), 
$$
or, if we define
\begin{equation}
\Gamma_n(a;\lambda) = \prod_{i=1}^n \Gamma(a+\lambda_i-\tfrac{1}{2}(i-1)),
= (a)_\lambda \Gamma_n(a)
\label{eq_3.5}
\end{equation}
we have proved that
\begin{equation}
\int_{\Sigma^+}e^{-\tr(s)}|s|^{a-p}\Omega_\lambda(s)\mathrm{d}s = \Gamma_n(a;\lambda)
\label{eq_3.6}
\end{equation}
for any partition $\lambda$ of length $\le n$.

More generally, we have
\begin{equation}
\int_{\Sigma^+} e^{-\tr(st)} |s|^{a-p} \Omega_\lambda(s) \mathrm{d}s
=|t|^{-a} \Gamma_n(a;\lambda) \Omega_\lambda(t^{-1}).
\label{eq_3.7}
\end{equation}

\begin{proof}
Replace $s$ by $t^{1/2}st^{1/2}$ as in the proof of \eqref{eq_3.3}, \&
then use the doubling principle \eqref{eq_1.7} to replace 
$\Omega_\lambda(st^{-1})$ in the integrand by 
$\Omega_\lambda(s)\Omega_\lambda(t^{-1})$.
\end{proof}

An equivalent version of \eqref{eq_3.7} is 
\begin{equation}
\int_{\Sigma^+} e^{-\tr s}|s|^{a-p} \Omega_\lambda(st) \mathrm{d}s = 
\Gamma_n(a;\lambda) \Omega_\lambda(t).
\label{eq_3.7p}
\tag{\ref{eq_3.7}$'$}
\end{equation}

\begin{proof}
Replace $(s,t)$ in \eqref{eq_3.7} by $(st,t^{-1})$.
\end{proof}

\eqref{eq_3.7}: Laplace transform of $|s|^{a-p}\Omega_\lambda(s)$ is 
$|t|^{-a} \Gamma_n(a;\lambda) \Omega_\lambda(t^{-1})$.

\subsection*{Laplace transform}
The Laplace transform $Lf$ of a function $f$ on $\Sigma^+$ is defined by
\begin{equation}
(Lf)(t)=\int_{\Sigma^+} e^{-\tr(st)} f(s) \mathrm{d}s.
\label{eq_3.8}
\end{equation}

With this notation, \eqref{eq_3.7} can be restated as follows.
\begin{flalign}
\mbox{If }f(s)=|s|^{a-p}\Omega_\lambda(s),\mbox{ then }&&
\label{eq_3.7pp}
\tag{\ref{eq_3.7}$''$}
\end{flalign}
$$
(Lf)(t)=|t|^{-a} \Gamma_n(a;\lambda)\Omega_\lambda(t^{-1}).
$$

\begin{flalign}
\mbox{Let}&&
\label{eq_3.9}
\end{flalign}
$$
f(t)=\int_0^t f_1(s)f_2(t-s) \mathrm{d}s\quad (t\in \Sigma^+)
$$
where the integration is over $[0,t]=\{s: 0<s<t\}$ in $\Sigma_n^+$. Then 
$$
Lf=(Lf_1)(Lf_2)
$$
(convolution theorem).

\begin{proof}
By definition,
$$
(Lf)(u)=\int_{\Sigma^+} e^{-\tr(tu)} \left(
\int_0^t f_1(s) f_2(t-s) \mathrm{d}s
\right)\mathrm{d}t.
$$
Put $t=s+s_1$, then this becomes (Fubini)
$$
\left(\int_{\Sigma^+} e^{-\tr(su)}f_1(s)\mathrm{d}s\right)
\left(\int_{\Sigma^+} e^{-\tr(s_1u)}f_2(s_1)\mathrm{d}s_1\right),
$$
since $\tr(tu)=\tr(s+s_1)u=\tr(su)+\tr(s_1u)$, and $\mathrm{d}s\mathrm{d}t=\mathrm{d}s\mathrm{d}s_1$.
\end{proof}

As an application of \eqref{eq_3.9}, let us take 
$$
f_1(s)=|s|^{a-p}\Omega_\lambda(s), \quad f_2(s)=|s|^{b-p}.
$$

Then by \eqref{eq_3.7pp} we have
\begin{eqnarray*}
(Lf_1)(t) & =& \Gamma_n(a;\lambda) |t|^{-a} \Omega_\lambda(t^{-1}),
\\
(Lf_2)(t) & =& \Gamma_n(b) |t|^{-b},
\end{eqnarray*}
and
\begin{eqnarray*}
f(t)    & = & \int _0^t \Omega_\lambda(s)|s|^{a-p} |t-s|^{b-p} \mathrm{d}s,
\\
(Lf)(t) & = & \Gamma_n(a;\lambda)\Gamma_n(b)|t|^{-(a+b)} \Omega_\lambda(t^{-1}),
\end{eqnarray*}
which by \eqref{eq_3.7pp} is the Laplace transform of
$$
\frac{\Gamma_n(a;\lambda)\Gamma_n(b)}{\Gamma_n(a+b;\lambda)}
\Omega_\lambda(t).
$$

Setting $t=1_n$, we obtain (by uniqueness of Laplace transform)

\begin{equation} 
\int_0^{1_n}
\Omega_\lambda(s)|s|^{a-p}|1-s|^{b-p} \mathrm{d}s
=
\frac{\Gamma_n(a;\lambda)\Gamma_n(b)}{\Gamma_n(a+b;\lambda)}
=
B_n(a,b)(a)_\lambda/(a+b)_\lambda,
\label{eq_3.10} 
\end{equation}
where
\begin{equation} 
B_n(a,b)=
\frac{\Gamma_n(a)\Gamma_n(b)}{\Gamma_n(a+b)}
=\int_0^{1_n} |s|^{a-p} |1-s|^{b-p} \mathrm{d}s.
\label{eq_3.11} 
\end{equation}

The interval $[0,1_n]$ in $\Sigma^+$ is $K$-stable, because $s$ and $ksk'$ 
have the same eigenvalues. From \eqref{eq_3.10} we deduce that
\begin{equation} 
\int_0^{1_n} \Omega_\lambda(st)|s|^{a-p} |1-s|^{b-p} \mathrm{d}s
= B_n(a,b)\frac{(a)_\lambda}{(a+b)_\lambda} \Omega_\lambda(t).
\label{eq_3.12} 
\end{equation}
\begin{proof}
Replace $s$ by $ksk'$ ($k\in K$) in the integrand, and then integrate over 
$K$ \& use the doubling principle \eqref{eq_1.7}. 
\end{proof}

We now apply these formulas to hypergeometric functions.

Let
\begin{alignat*}{2}
\underline{a} & =   (a_1,\ldots,a_p), &\qquad
\underline{a}^+  & =   (a_1,\ldots,a_p,a),
\\
\underline{b} & =  (b_1,\ldots,b_q), & \qquad
\underline{b}^+ & = (b_1,\ldots,b_q,b).
\end{alignat*}

\begin{equation} 
\int_{\Sigma^+} e^{-\tr(s)} \mbox{} _pF_q (\underline{a};\underline{b};
st)|s|^{a-p} \mathrm{d}s
=\Gamma_n(a) \, _{p+1}F_q(\underline{a}^+;\underline{b};t).
\label{eq_3.13} 
\end{equation}
\begin{proof}
This proof follows from \eqref{eq_3.7p} if we replace $\Omega_\lambda$ 
there by $J_\lambda^*$ on either side.
\end{proof}

Equivalently,
\begin{flalign} 
\mbox{The function }f(s)=|s|^{a-p} 
\mbox{}_pF_q(\underline{a};\underline{b};s) &&
\label{eq_3.13p} 
\tag{\ref{eq_3.13}$'$}
\end{flalign}
has Laplace transform $Lf(t)$, where
$$
(Lf)(t^{-1}) = 
\Gamma_n(a)|t|^a\,_{p+1}F_q(\underline{a}^+;\underline{b};t).
$$

Next, from \eqref{eq_3.12} we have\footnote{See Additional observation at the end of this section.}
\begin{equation} 
\int_0^{1_n} \mbox{} _pF_q (\underline{a};\underline{b};
st)|s|^{a-p} |1-s|^{b-a-p} \mathrm{d}s 
=\frac{\Gamma_n(a)\Gamma_n(b-a)}{\Gamma_n(b)}
\,_{p+1}F_{q+1}(\underline{a}^+;\underline{b}^+;t).
\label{eq_3.14} 
\end{equation}

A particular case of \eqref{eq_3.14} is ($p=1, q=0$):
\begin{equation}
_2F_1(a,b;c;t)=\frac{\Gamma_n(c)}{\Gamma_n(a)\Gamma_n(c-a)}
\int_0^{1_n} \frac{|s|^{a-p} |1-s|^{c-a-p} }{|1-st|^b}\mathrm{d}s
\label{eq_3.15} 
\end{equation}
(Herz \cite[ (2.12)]{Herz55}). Another particular case is
\begin{equation}
_1F_1(a;b;t)=\frac{\Gamma_n(b)}{\Gamma_n(a)\Gamma_n(b-a)}
\int_0^{1_n} e^{\tr(st)}|s|^{a-p} |1-s|^{b-a-p} \mathrm{d}s.
\label{eq_3.16} 
\end{equation}

In \eqref{eq_3.15}, replace $s$ by $1-s$; we obtain
$$
_2F_1(a,b;c;t)=\frac{\Gamma_n(c)}{\Gamma_n(a)\Gamma_n(c-a)}
\int_0^{1_n} \frac{|s|^{c-a-p} |1-s|^{a-p} }{|1-t+st|^b}\mathrm{d}s
=|1-t|^{-b}\mbox{}
_2F_1(c-a,b;c;-t(1-t)^{-1}).
$$

If we now reiterate this, with $b,c-a$ taking the roles of $a,b$ and 
observe that $|1+t(1-t)^{-1}|=|1-t|^{-1}$, we obtain Euler's relation
\begin{equation}
_2F_1(a,b;c;t)=
|1-t|^{c-a-b}\mbox{}_2F_1(c-a,c-b;c;t).
\label{eq_3.17}
\end{equation}

Again, by replacing $s$ by $1-s$ in \eqref{eq_3.16}, we obtain (Kummer)

\begin{equation}
_1F_1(a;b;t)=
e^{\tr(t)}\mbox{}_1F_1(b-a;b;-t).
\label{eq_3.18} 
\end{equation}
\bigskip

\subsection*{Additional observation}

Inverse Laplace transform
$$_pF_{q+1}(\underline{a},\underline{b}^+;s) = \frac{\Gamma_n(b)}{(2\pi)^ni^{n(n+1)/2}}\int_{\mathrm{Re}(t)=x_0>0}e^{\mathrm{tr}(t)}\mbox{}_pF_q(\underline{a};\underline{b};t^{-1}s)'|t|^{-b}\mathrm{d}t.$$
$t\in\Sigma^+ \oplus i\Sigma$
\newpage

\section{Gauss \& Saalschutz summation}

If we set $t=1_n$ in \eqref{eq_3.15} we obtain
$$
_2F_1(a,b;c;1_n)=\frac{\Gamma_n(c)}{\Gamma_n(a)\Gamma_n(c-a)}
\int_0^{1_n} |s|^{a-p} |1-s|^{c-a-b-p} \mathrm{d}s =
\frac{\Gamma_n(c)}{\Gamma_n(a)\Gamma_n(c-a)}\cdot
\frac{\Gamma_n(a)\Gamma_n(c-a-b)}{\Gamma_n(c-b)}
$$
by \eqref{eq_3.11}. Hence
\begin{flalign} 
\mbox{(Gauss)} \quad
_2F_1(a,b;c;1_n)=
\frac{\Gamma_n(c)\Gamma_n(c-a-b)}
{\Gamma_n(c-a)\Gamma_n(c-b)}. &&
\label{eq_4.1} 
\end{flalign}

Next, from \eqref{eq_3.17} in the form
$$
_2F_1(c-a,c-b;c;t)=|1-t|^{a+b-c} \mbox{}_2F_1(a,b;c;t)
$$
we obtain
$$
\sum_\lambda \frac{(c-a)_\lambda (c-b)_\lambda}{(c)_\lambda} J_\lambda^*
=\sum_{\mu,\nu} 
\frac{(a)_\mu (b)_\mu}{(c)_\mu} (c-a-b)_\nu J_\mu^* J_\nu^*
$$
and hence
\begin{equation} 
 \frac{(c-a)_\lambda (c-b)_\lambda}{(c)_\lambda}
=\sum_{\mu,\nu} 
\frac{(a)_\mu (b)_\mu}{(c)_\mu} (c-a-b)_\nu \langle J_{\lambda/\mu}, 
J_\nu^* 
\rangle
\label{eq_4.2} 
\end{equation}
since $\langle J_\lambda, J_\mu^*J_\nu^* \rangle = \langle 
J_{\lambda/\mu}, J_\nu^* 
\rangle$.

Suppose in particular that $\lambda = (N^n), \, N\ge 0$. For a partition 
$\mu\subset (N^n)$, define $\hat \mu$ to be the complement of $\mu$ in 
$(N^n)$, i.e., 
\begin{equation}
\hat \mu_i=N-\mu_{n+1-i}. 
\label{eq_4.3} 
\end{equation}

We have then
\begin{equation} 
(a)_{\hat \mu} (-a-N+p)_\mu = (-1)^{|\mu|} (a)_{(N^n)}.
\label{eq_4.4} 
\end{equation}
\begin{proof}
\begin{align*}
(a)_{\hat \mu} 
& = \prod_{i=1}^n (a-\tfrac{1}{2}(i-1))_{\hat \mu_i} \\
& = \prod_{j=1}^n (a-\tfrac{1}{2}(n-j))_{N-\mu_j} \\
& = (a)_{(N^n)}\prod_{j=1}^n 
\prod_{i=1}^{\mu_j}(a-\tfrac{1}{2}(n-j)+N-i)^{-1} \\
& = (-1)^{|\mu|}(a)_{(N^n)}\prod_{j=1}^n 
\prod_{i=1}^{\mu_j}(-a-N+\tfrac{1}{2}(n-j)+i)^{-1} \\
& = (-1)^{|\mu|}(a)_{(N^n)}/ (-a-N+p)_\mu.
\end{align*}
\end{proof}

\begin{flalign}
\mbox{We have }P_{\hat\mu}(x) = |x|^N P_\mu(x^{-1}). &&
\label{eq_4.5} 
\end{flalign}
\begin{proof}
Both sides have $x^{\hat \mu}$ as leading term, and
$$
\left\langle 
|x|^N P_\mu(x^{-1}),
|x|^N P_\nu(x^{-1})
 \right\rangle_2' = 
\left\langle P_\mu,P_\nu\right\rangle_2' ,
$$
which is 0 if $\mu\ne\nu$.
\end{proof}

Let $f_{\nu\hat\mu}^{(N^n)}$ denote the coefficient of $P_{(N^n)}$ in 
$P_\nu P_{\hat\mu}$. Then
\begin{flalign} 
\mbox{(i)} \quad& f_{\nu\hat\mu}^{(N^n)} = 0, \mbox{ if }\nu\ne \mu;&& 
\label{eq_4.6}
\\
\mbox{(ii)} \quad & f_{\mu\hat\mu}^{(N^n)} = |P_\mu|'^2 / |1|'^2 = 
P_\mu(1_n)/Q_{\hat\mu}(1_{n+1}). &&
\nonumber
\end{flalign}
\begin{proof}
We have
$$
\left\langle P_\mu,P_\nu\right\rangle' = 
\left\langle P_\nu\bar P_\mu,1\right\rangle' = 
\left\langle P_\nu P_{\hat\mu},|x|^N\right\rangle' = 
\sum_\lambda f_{\nu\hat\mu}^\lambda \langle
P_\lambda,|x|^N\rangle' = f_{\nu\hat\mu}^{(N^n)} 
\langle 1,1 \rangle'
$$
by orthogonality, since $|x|^N = P_{(N^n)}$ (in $n$ variables).

This proves (i) and (ii) since (for zonal polynomials) the conjecture
$$
|P_\mu|'^2 = |1|'^2 P_\mu(1_n)/Q_\mu(1_{n+1})
$$
is known to be true, and $Q_\mu(1_{n+1})=Q_{\hat \mu}(1_{n+1})$.
\end{proof}

We now return to \eqref{eq_4.2}, with $\lambda=(N^n)$:
$$
 \frac{(c-a)_{(N^n)} (c-b)_{(N^n)}}{(c)_{(N^n)}}
=\sum_{\mu,\nu} 
\frac{(a)_\mu (b)_\mu}{(c)_\mu} (c-a-b)_\nu
 \langle J_{(N^n)/\mu}, 
J_\nu^* 
\rangle.
$$

Now
\begin{align*}
 J_{(N^n)/\mu} &=h'_{(N^n)/\mu} Q_{(N^n)/\mu}
\\
&=h'_{(N^n)/\mu} \sum_\nu f_{\mu\nu}^{(N^n)}
Q_\nu
\\
&=h'_{(N^n)/\mu} \frac{P_\mu(1_n)}{Q_{\hat\mu}(1_{n+1})}Q_{\hat\mu}
\quad\quad\mbox{by \eqref{eq_4.6}},
\end{align*}
hence $\langle J_{(N^n)/\mu}, J_\nu^* \rangle=0$ if $\nu\ne\hat \mu$ and
is equal to 
$$
\frac{h'_{(N^n)}}{h_\mu' h_{\hat\mu}'}\cdot 
\frac{P_\mu(1_n)}{Q_{\hat\mu}(1_{n+1})}
$$
if $\nu=\hat\mu$. So we obtain
$$
 \frac{(c-a)_{(N^n)} (c-b)_{(N^n)}}{(c)_{(N^n)}}
=\sum_{\mu\subset (N^n)} 
\frac{(a)_\mu (b)_\mu}{(c)_\mu} 
(c-a-b)_{\hat\mu}
\frac{h'_{(N^n)}}{h_\mu' h_{\hat\mu}'} \cdot
\frac{P_\mu(1_n)}{Q_{\hat\mu}(1_{n+1})}.
$$
By \eqref{eq_4.4} we have
$$
(c-a-b)_{\hat\mu} = \frac{(-1)^{|\hat\mu|}(c-a-b)_{(N^n)}}
{(a+b-c-N+p)_\mu}
$$
and
$$
(-N)_\mu = (-1)^{|\mu|} (p)_{(N^n)}/(p)_{\hat\mu}.
$$
Hence
$$
(c-a-b)_{\hat\mu}
\frac{h'_{(N^n)}}{h_\mu' h_{\hat\mu}'} \cdot
\frac{P_\mu(1_n)}{Q_{\hat\mu}(1_{n+1})}
=
\frac{(-N)_\mu}{(a+b-c-N+p)_\mu}
(c-a-b)_{(N^n)}
\frac{h'_{(N^n)}}{(p)_{(N^n)}}\cdot
\frac{(p)_{\hat\mu}}{J_{\hat\mu}(1_{n+1})}
J_\mu^*(1_n).
$$

But $h'_{(N^n)}=2^{Nn} (p)_{(N^n)}$, and $J_{\hat\mu}(1_{n+1})=
2^{|\hat\mu|}(p)_{\hat\mu}$, so that we finally obtain
$$
\frac{(-N)_\mu}{(a+b-c-N+p)_\mu}
(c-a-b)_{(N^n)}
2^{|\mu|}
J_\mu^*(1_n)
$$
and therefore
\begin{eqnarray*}
\frac
{(c-a)_{(N^n)}(c-b)_{(N^n)}}
{(c)_{(N^n)}(c-a-b)_{(N^n)}}
&=& \sum_\mu
\frac{(a)_\mu (b)_\mu (-N)_\mu}{(c)_\mu (d)_\mu} 2^{|\mu|}J_\mu^*(1_n) \\
&=& _3F_2(a,b,-N;c,d;1),
\end{eqnarray*}
where $d=a+b-c-N+p$, i.e.,
$$
a+b-N+p=c+d.
$$

So, changing the notation, we have established the analogue of Saalschutz's 
summation
\begin{flalign}
\mbox{Let }& \underline{a}=(a_1,a_2,a_3), \, \underline{b}=(b_1,b_2), 
\mbox{ where}&&
\label{eq4.7}\\
\mbox{(i)} &\quad \sum a_i+p = \sum b_i \quad (\mbox{or say 
}|\underline{a}|+p=|\underline{b}|)
\nonumber 
\\
\mbox{(ii)}& \quad\mbox{one of the $a_i$ is a negative integer}.
\nonumber
\end{flalign}

Then 
$$
_3F_2(\underline{a};\underline{b};1_n)=\prod_I 
\Gamma_n(b_1-a_{I})^{(-1)^{|I|+1}},
$$
where the product is taken over the 8 subsets $I$ of $\{1,2,3\}$, and 
$a_I=\sum_{i\in I}a_i$.
\newpage

\section{Integral formulae II}
\label{sec_5}

Let $X$ (resp.\ $X^+$) denote the set of diagonal matrices 
$x={\rm diag}(x_1,x_2,\ldots,x_n)$ with all $x_i\ge 0$ (resp.\ with $x_1\ge 
x_2\ge \cdots \ge x_n\ge 0$).

Each $s\in\Sigma^+$ is of the form $kxk'$ ($k\in K, x\in X^+$): not 
uniquely, since we may replace $k$ by $k\varepsilon$, where $\varepsilon$ 
is a diagonal matrix of $\pm 1$'s (so that here are $2^n$ possibilities 
for $\varepsilon$). Corresponding to the decomposition $s=kxk'$ we have the 
integral formula (cf.\ Farrell \cite{Farrell85}, p.\ 75: recall that 
$\mathrm{d}s=\pi^{-n(n-1)/4} \prod_{i\le j} \mathrm{d}s_{ij}$)

\begin{equation} 
\int_{\Sigma^+} f(s)\mathrm{d}s = \frac{1}{c_n'}\int_X\left( 
\int_K f(kxk')\mathrm{d}k
\right) |\Delta(x)|\mathrm{d}x
\label{eq_5.1} 
\end{equation}
where $\mathrm{d}x=\prod_{i=1}^n \mathrm{d}x_i$, and

\begin{equation} 
\Delta(x)=\prod_{i<j}(x_i-x_j)
\label{eq_5.2} 
\end{equation}
and
\begin{equation} 
c_n'=\frac{n!}{\pi^{n/2}}\Gamma_n(\tfrac{n}{2})=\prod_{i=1}^n
\frac{(i/2)!}{(1/2)!}.
\label{eq_5.3} 
\end{equation}
In particular, when $f$ is $K$-invariant, \eqref{eq_5.1} takes the simpler 
form
\begin{equation}
\int_{\Sigma^+} f(s)\mathrm{d}s = \frac{1}{c_n'}\int_X
 f(x)
 |\Delta(x)|\mathrm{d}x.
\label{eq_5.4} 
\end{equation}

From \eqref{eq_3.3} we obtain, since
$$
\int_K e^{-\tr(kxk'y)}\mathrm{d}k = \mbox{}_0F_0(-x;y)
$$
\begin{equation}
\int_X \mbox{}_0F_0(-x;y)|x|^{a-p}|\Delta(x)| \mathrm{d}x = c_n'|y|^{-a} 
\Gamma_n(a) 
\label{eq_5.5} 
\end{equation}
(but be warned that this integral cannot be evaluated term by term).

In particular, when $y=1_n$ we obtain
\begin{equation}
\int_X e^{-\tr(x)}|x|^{a-p}|\Delta(x)| \mathrm{d}x = c_n' 
\Gamma_n(a).
\label{eq_5.6} 
\end{equation}

Next \eqref{eq_3.6} gives in the same way
\begin{equation}
\int_X e^{-\tr(x)}\Omega_\lambda(x)|x|^{a-p}|\Delta(x)| \mathrm{d}x = c_n' 
\Gamma_n(a;\lambda),
\label{eq_5.7} 
\end{equation}
which reduces to \eqref{eq_5.6} when $\lambda=0$. Likewise, from 
\eqref{eq_3.7},
\begin{equation}
\int_X \mbox{}_0F_0(-x;y)\Omega_\lambda(x)|x|^{a-p}|\Delta(x)| \mathrm{d}x = 
c_n'|y|^{-a} \Gamma_n(a;\lambda) \Omega_\lambda(y^{-1}). 
\label{eq_5.8} 
\end{equation}
(Again this integral cannot be evaluated term by term.)

Next, if $f$ is a $K$-invariant function on $\Sigma^+$, its Laplace 
transform $Lf$ is also $K$-invariant, since
\begin{align*}
(Lf)(t)
&=\int_{\Sigma^+} e^{-\tr(st)}f(s) \mathrm{d}s\\
&=\int_{\Sigma^+} e^{-\tr(ksk't)}f(s) \mathrm{d}s\\
&=(Lf)(k'tk).
\end{align*}

We have
\begin{equation}
(Lf)(y) = c_n'^{-1} \int_X \mbox{}_0F_0(-x;y) f(x)|\Delta(x)|\mathrm{d}x. 
\label{eq_5.9} 
\end{equation}

Next, \eqref{eq_3.10} gives
\begin{equation}
\left(\int_0^1\right)^n\Omega_\lambda(x) |x|^{a-p} |1-x|^{b-p}
|\Delta(x)|\mathrm{d}x =
c_n'\frac{\Gamma_n(a;\lambda)\Gamma_n(b)}{\Gamma_n(a+b;\lambda)}, 
\label{eq_5.10} 
\end{equation}
which is Kadell's generalization of Selberg's integral with parameter 
$k=\frac{1}{2}$.

Finally, from \eqref{eq_3.13} and \eqref{eq_3.14}, bearing in mind 
\eqref{eq_1.8}, we obtain
\begin{align}
\int_X e^{-\tr(x)} F(\underline{a};\underline{b};x;y)|x|^{a-p}|\Delta(x)| 
\mathrm{d}x 
&= 
c_n' \Gamma_n(a) F(\underline{a}^+;\underline{b};y), 
\label{eq_5.11} 
\\
\left(\int_0^1\right)^n
F(\underline{a};\underline{b};x;y)|x|^{a-p} |1-x|^{b-a-p}|\Delta(x)| 
\mathrm{d}x 
&= 
c_n' \frac{\Gamma_n(a)\Gamma_n(b-a)}{\Gamma_n(b)} 
F(\underline{a}^+;\underline{b}^+;y), 
\label{eq_5.12} 
\end{align}
with the notation of \S3 for $\underline{a}, 
\underline{a}^+, \underline{b}, 
\underline{b}^+$.
\newpage

\section{Hypergeometric functions with parameter $\alpha$}

Since the zonal polynomials $Z_\lambda$ are Jack's polynomials 
$J_\lambda(x;\alpha)$ with parameter $\alpha=2$, it is clear how to 
generalize the definitions. 

Throughout, we shall set
\begin{equation}
k=\alpha^{-1}. 
\label{eq_6.1} 
\end{equation}

Define
\begin{equation}
(a)_\lambda = (a;\alpha)_\lambda = \prod_{i\ge 1}(a-k(i-1))_{\lambda_i}
\label{eq_6.2} 
\end{equation}
and then
\begin{equation}
(\underline{a})_\lambda = (\underline{a};\alpha)_\lambda
=(a_1)_\lambda \cdots (a_p)_\lambda
\label{eq_6.3} 
\end{equation}
for $\underline{a}=(a_1,\ldots,a_p)$.

With this notation established, we define
\begin{align}
_pF_q(\underline{a};\underline{b};x;\alpha)
&=\sum_\lambda\frac{(\underline{a})_\lambda}{(\underline{b})_\lambda}
\alpha^{|\lambda|}J_\lambda^*(x;\alpha)
\label{eq_6.4}
\\
_pF_q(\underline{a};\underline{b};x,y;\alpha)
&=\sum_\lambda\frac{(\underline{a})_\lambda}{(\underline{b})_\lambda}
\alpha^{|\lambda|}
\frac{J_\lambda^*(x;\alpha)J_\lambda^*(y;\alpha)}
     {J_\lambda^*(1_n;\alpha)},
\label{eq_6.5}
\end{align}
where $p$, $q$ are the lenghts of the sequences $\underline{a}$, 
$\underline{b}$, respectively.

Notice that
\begin{equation} 
_pF_q(\underline{a};\underline{b};x,1;\alpha)=
\mbox{}_pF_q(\underline{a};\underline{b};x;\alpha).
\label{eq_6.6} 
\end{equation}

As before (\S2) we have
\begin{equation}
_0F_0(x;\alpha)=e^{p_1(x)}.
\label{eq_6.7} 
\end{equation}
For the same reason as before: the coefficient of $p_1^m$ in 
$J_\lambda(x;\alpha)$ is 1 for all $\lambda\vdash m$, so that
$\langle p_1^m,J_\lambda \rangle_\alpha = \alpha^m m!$ and therefore
$$
p_1^m = \sum_{\lambda\vdash m} 
\langle p_1^m,J_\lambda \rangle J_\lambda^* = \alpha^m m!
\sum_{\lambda\vdash m} J_\lambda^*.
$$

\begin{equation}
_1F_0(a;x;\alpha) = |1-x|^{-a}. 
\label{eq_6.8} 
\end{equation}
The proof is the same as that of \eqref{eq_2.2}, using
\begin{equation}
\varepsilon_X(J_\lambda)=\alpha^{|\lambda|} (kX)_\lambda  
\label{eq_6.9} 
\end{equation}
in place of \eqref{eq_2.3}. We find then that
$$
_1F_0(a;x;\alpha)=\varepsilon_{a\alpha}^{(y)} 
\prod_{i,j=1}^n(1-x_iy_j)^{-1/\alpha}
=\prod_{i=1}^n(1-x_i)^{-a}.
$$

The formula \eqref{eq_2.4} has no counterpart for general $\alpha$ since 
there is no $K$ over which to integrate.

\begin{equation}
_1F_0(kn;x,y;\alpha) = \Pi(x,y;\alpha) = \prod_{i,j}(1-x_iy_j)^{-k}. 
\label{eq_6.10} 
\end{equation}

\begin{proof}
We have
\begin{align*}
_1F_0 
&= \sum_\lambda \alpha^{|\lambda|}(kn)_\lambda
\frac{J_\lambda(x)}{J_\lambda(1_n)}J_\lambda^*(y)
\\
&= \sum_\lambda J_\lambda(x)J_\lambda^*(y) \quad \mbox{by \eqref{eq_6.9}}
\\
&=\Pi(x,y;\alpha).
\end{align*}
\end{proof}

Let $\Pi''(x,y;\alpha)$ be the defining series for the scalar product
$$
\langle f,g \rangle_\alpha'' = 
\langle f,g \rangle_\alpha' /
\langle 1,1 \rangle_\alpha'.
$$

Conjecturally,
\begin{equation} 
\langle P_\lambda,P_\lambda\rangle_\alpha''
=\frac{\varepsilon_n(P_\lambda)}{\varepsilon_{n-1+\alpha}(Q_\lambda)},
\tag{6.11C}
\label{eq_6.11C} 
\end{equation}
from which it would follow that
\begin{align*}
\Pi''(x,y;\alpha) 
&=\sum_\lambda \frac{P_\lambda(x)P_\lambda(y)}{\varepsilon_n(P_\lambda)}
\varepsilon_{n-1+\alpha}(Q_\lambda)
\\
&=\sum_\lambda 
\Omega_\lambda(x) 
J^*_\lambda(y)
\varepsilon_{n-1+\alpha}(J_\lambda)
\\
&=\sum_\lambda 
(k(n-1)+1)_\lambda \alpha^{|\lambda|}
\Omega_\lambda(x) 
J^*_\lambda(y),
\end{align*}
i.e., that
\begin{equation} 
\Pi''(x,y;\alpha) = \mbox{} _1F_0(p;x,y;\alpha),
\tag{6.12C}
\label{eq_6.12C} 
\end{equation}
where
\setcounter{equation}{12}
\begin{equation} 
\label{eq_6.13} 
p=k(n-1)+1.
\end{equation}

\underline{Remark}. When $n=1$ these hypergeometric series agree with the 
classical ones, and do not depend on $\alpha$. For by \ref{eq_6.7} we 
have
$e^x=\sum_{r\ge 0} \alpha^r J_{(r)}^*(x;\alpha)$, giving
$\alpha^r J_{(r)}^* = x^r/r!$ for each $r\ge 0$.

\underline{Question}. In the case $\alpha=2$ (also $\alpha=1,\frac{1}{2}$ 
probably) we have \eqref{eq_2.7}
$$
_0F_0(1+x,y) = \int_K e^{{\rm trace}((1+x)kyk')}\mathrm{d}k =
e^{{\rm trace}(y)} \mbox{}_0F_0(x,y),
$$
from which it follows that
$$
\Omega_\lambda(1+x) = \sum_\mu \Omega_\mu(x)
\langle J_{\lambda/\mu}, e^{p_1} \rangle_\alpha \cdot \alpha 
^{-|\lambda-\mu|}. 
$$
In other words, if we write
\begin{align}
\binom{\lambda}{\mu} 
&= \mbox{coefficient of $p_1^m$ in $J_{\lambda/
\mu} \quad\quad(\lambda\supset\mu, |\lambda-\mu|=m)$} 
\label{eq_6.14} 
\\
&=\varepsilon(J_{\lambda/\mu}),\;\mbox{ where 
$\varepsilon(p_r)=\delta_{1r}$}\quad\quad(\mbox{so 
}\varepsilon(J_\lambda)=1),
\nonumber
\end{align}
then
\begin{equation} 
\Omega_\lambda(1+x)=\sum_{\mu\subset\lambda}\binom{\lambda}{\mu}\Omega_\mu(x).
\label{eq_6.15} 
\end{equation}
Is this true for all values of $\alpha$? It is a sort of substitute for the doubling principle.

We come now to the analogues of the integral formulas of \S\ref{sec_5}.

Let 
\begin{equation}
c_n'(\alpha)=\prod_{i=1}^n \frac{(ik)!}{k!} = n! \prod_{i=1}^n \frac{\Gamma(ik)}{\Gamma(k)} 
\label{eq_6.16} 
\end{equation}
and define
\begin{align*}
\Gamma_n(a;\alpha)&=\prod_{i=1}^n \Gamma(a-k(i-1))
\\
\Gamma_n(a;\lambda;\alpha)&=\prod_{i=1}^n \Gamma(a+\lambda_i-k(i-1))
\end{align*}
($\lambda$ a partition of length $\leq n$).

Then we have, in generalization of \eqref{eq_5.6}, \eqref{eq_5.7}, \eqref{eq_5.10}:--
\begin{align}
\int_{\mathbb{R}^n_+} e^{-\tr(x)}|x|^{a-p} |\Delta(x)|^{2k} \mathrm{d}x &= c_n'(\alpha) \Gamma_n(a;\alpha),
\label{eq_6.17}
\\
\int_{\mathbb{R}^n_+} e^{-\tr(x)}\Omega_\lambda(x;\alpha)|x|^{a-p} |\Delta(x)|^{2k} \mathrm{d}x &= c_n'(\alpha) \Gamma_n(a;\lambda;\alpha),
\label{eq_6.18}
\\
\int_{[0,1]^n} \Omega_\lambda(x;\alpha)|x|^{a-p}|1-x|^{b-p} |\Delta(x)|^{2k} \mathrm{d}x &= c_n'(\alpha) \frac{
\Gamma_n(a;\lambda;\alpha) \Gamma_n(b;\alpha)}{\Gamma_n(a+b;\lambda;\alpha)}.
\label{eq_6.19}
\end{align}
\eqref{eq_6.19} is Kadell's extension of Selberg's integral. To deduce \eqref{eq_6.18} from \eqref{eq_6.19}, put $x_i=y_i/N$, where $N=b-p$. We obtain 
$$\left(\int_0^N \right)^n \Omega_\lambda(y;\alpha) |y|^{a-p} \left| 1-\tfrac{y}{N}\right|^N
 |\Delta(y)|^{2k} \mathrm{d}y=N^c \cdot c_n'(\alpha) \Gamma_n(a;\lambda;\alpha)
 \prod_{i=1}^n \frac{\Gamma(N+p-k(i-1))}{\Gamma(N+a+\lambda_i+p-k(i-1))},
$$
where
$$
c=|\lambda|+n(a-p)+kn(n-1)+n = |\lambda|+na = \sum_{i=1}^n (a+\lambda_i).
$$
Now let $N\rightarrow\infty$; then
$$
\left| 1-\tfrac{y}{N}\right|^N = \prod_{i=1}^n \left(1-\tfrac{y_i}{N} \right)^N\rightarrow \prod e^{-y_i}
=e^{-\tr(y)}
$$
and
$$
\frac{\Gamma(N+u)}{\Gamma(N+v)}\sim N^{u-v} \quad\mbox{as }N\rightarrow\infty
$$
(any $u,v$). In the limit, therefore, we get \eqref{eq_6.18}. Finally \eqref{eq_6.17} is the case $\lambda=0$ of \eqref{eq_6.18}.

The integral formulas \eqref{eq_6.18}, \eqref{eq_6.19} lead directly to the following formulas for the hypergeometric functions:\footnote{notation: d$\mu(x) = c_n'(\alpha)^{-1}|\Delta(x)|^{2k}$d$x$}
\begin{equation}
\int_{\mathbb{R}^n_+}e^{-\tr(x)} \mbox{} _pF_q(\underline{a};\underline{b};x,y;\alpha)
|x|^{a-p} |\Delta(x)|^{2k} \mathrm{d}x 
=
c_n'(\alpha) \Gamma_n(a;\alpha) \,_{p+1}F_q(\underline{a}^+;\underline{b};y;\alpha),
\label{eq_6.20}
\end{equation}
\begin{multline}
\left(\int_0^1 \right)^n
\mbox{}
_pF_q(\underline{a};\underline{b};x,y;\alpha)
|x|^{a-p} |1-x|^{b-a-p}|\Delta(x)|^{2k} \mathrm{d}x 
\\
=
c_n'(\alpha) 
\frac{\Gamma_n(a;\alpha)\Gamma_n(b-a;\alpha)}{\Gamma_n(b;\alpha)}
 \,_{p+1}F_{q+1}(\underline{a}^+;\underline{b}^+;y;\alpha),
\label{eq_6.21}
\end{multline}
where as before
\begin{alignat*}{2}
\underline{a} & =   (a_1,\ldots,a_p), &\qquad
\underline{a}^+  & =   (a_1,\ldots,a_p,a),
\\
\underline{b} & =  (b_1,\ldots,b_q), & \qquad
\underline{b}^+ & = (b_1,\ldots,b_q,b).
\end{alignat*}
\bigskip

\underline{Question}: We may ask whether \eqref{eq_2.8} holds for all $\alpha$, i.e., whether
$$
_1F_0(a;1+x,y;\alpha) =|1-y|^{-a} \mbox{} _1F_0(a;x,\tfrac{y}{1-y};\alpha).
$$
Spelt out, this is
$$
\sum_\lambda (a)_\lambda \alpha^{|\lambda|} J_\lambda^*(1+x;\alpha)
\Omega_\lambda(y;\alpha)
=|1-y|^{-a}\sum_\mu(a)_\mu \alpha^{|\mu|} J_\mu^*(x;\alpha) \Omega_\mu(\tfrac{y}{1-y};\alpha).
$$

If we equate the coefficients of $\Omega_\lambda(y;\alpha)$ on either side we obtain a (purported) polynomial identity in $a$. So there is no loss of generality in taking $a=-N$, where $N$ is an integer $\ge \lambda_1$, so that $\lambda\subset (N^n)$. Then the assertion is 
\begin{flalign} 
(-N)_\lambda \alpha^{|\lambda|}J_\lambda^*(1+x;\alpha)
\label{eq_6.22} 
=\mbox{coefficient of }\Omega_\lambda(y;\alpha)\mbox{ in }&&
\end{flalign}
$$
|1-y|^N \sum_\mu
(-N)_\mu \alpha^{|\mu|} J_\mu^*(x;\alpha) \Omega_\mu(\tfrac{y}{1-y};\alpha).
$$

Now we have (see \eqref{eq_4.5})
$$
P_{\hat\mu}(x;\alpha)=|x|^NP_\mu(x^{-1};\alpha)
$$
and hence
$$
P_{\hat\mu}(1_n)=P_\mu(1_n)
$$
so that
\begin{equation} 
\Omega_{\hat\mu}(x;\alpha)=|x|^N\Omega_\mu(x^{-1};\alpha).
\label{eq_6.23} 
\end{equation}

In what follows we shall assume (a) that \eqref{eq_6.15} is true, i.e., 
\begin{equation}
\Omega_\lambda(1+x;\alpha)=
\sum_{\mu\subset\lambda} \binom{\lambda}{\mu} \Omega_\mu(x;\alpha), 
\label{eq_6.24} 
\end{equation}
where $\binom{\lambda}{\mu}$ is the coefficient of $p_1^{|\lambda-\mu|}$ in $J_{\lambda/\mu}$, and that
\begin{equation}
\langle P_\lambda,P_\lambda \rangle''  = \varepsilon_n(P_\lambda)/\varepsilon_{n-1+\alpha}(Q_\lambda),
\label{eq_6.25} 
\end{equation}
where $\langle f,g \rangle''=\langle f,g \rangle'/\langle 1,1 \rangle'$.

Then we have by \eqref{eq_6.23}
\begin{align*}
|1-y|^N \Omega_\mu(\tfrac{y}{1-y};\alpha) 
&=|y|^N \Omega_{\hat\mu}(y^{-1}-1;\alpha)
\\
&=(-1)^{|\hat\mu|}|y|^N \Omega_{\hat\mu}(1-y^{-1};\alpha)
\\
&=(-1)^{|\hat\mu|}|y|^N \sum_{\nu\supset\mu}(-1)^{|\hat\nu|}\binom{\hat\mu}{\hat\nu}\Omega_{\nu}(y^{-1};\alpha)
\\
&=\sum_{\nu\supset\mu}(-1)^{|\nu-\mu|}\binom{\hat\mu}{\hat\nu}\Omega_{\nu}(y;\alpha)
\end{align*}
by \eqref{eq_6.24} and \eqref{eq_6.23} again.

Hence we obtain from \eqref{eq_6.22}
$$
(-N)_\lambda \alpha^{|\lambda|} J_\lambda^*(1+x;\alpha)
=\sum_{\mu\subset\lambda} (-1)^{|\lambda-\mu|} (-N)_\mu \alpha^{|\mu|}
\binom{\hat\mu}{\hat\lambda} J_\mu^*(x;\alpha),
$$
which we want to compare with \eqref{eq_6.24}. First of all, by \eqref{eq_4.4}
(valid for all $\alpha$) we have
$$
(-N)_\lambda = (-1)^{|\lambda|} (p)_{(N^n)}/(p)_{\hat\lambda}
$$
and likewise with $\mu$ in place of $\lambda$; also\footnote{See Additional observation 1 at the end of this section.}
$$
\alpha^{|\hat\lambda|} (p)_{\hat\lambda} = \varepsilon_{n-1+\alpha}(J_{\hat\lambda}).
$$

So we obtain
\begin{equation}
\frac{J_\lambda^*(1+x;\alpha)}{\varepsilon_{n-1+\alpha}(J_{\hat\lambda})} = 
\sum_{\mu\subset\lambda} \binom{\hat\mu}{\hat\lambda} 
\frac{J_\mu^*(x;\alpha)}{\varepsilon_{n-1+\alpha}(J_{\hat\mu})}.
\label{eq_6.26} 
\end{equation}
Comparison of this with \eqref{eq_6.24} requires us to establish
\begin{equation}
\frac{\varepsilon_n(J_{\lambda}^*)}{\varepsilon_{n-1+\alpha}(J_{\hat\lambda})} 
\binom{\lambda}{\mu}
=
\frac{\varepsilon_n(J_{\mu}^*)}{\varepsilon_{n-1+\alpha}(J_{\hat\mu})}.  
\binom{\hat\mu}{\hat\lambda} 
\label{eq_6.27} 
\end{equation}

To prove this we proceed as follows.
\begin{flalign}
\mbox{If }\mu\subset\lambda\mbox{ then }&&
\label{eq_6.28} 
\end{flalign}
$$Q_{\lambda/\mu} \langle P_\lambda,P_\lambda \rangle'
=Q_{\hat\mu/\hat\lambda} \langle P_\mu,P_\mu \rangle'.$$
\begin{proof}
Let $f_{\mu\nu}^\lambda$ as usual denote the coefficient of $P_\lambda$ in 
$P_\mu P_\nu$. Then by orthogonality we have
\begin{align*}
f_{\mu\nu}^\lambda 
&=
\frac
{\langle P_\lambda, P_\mu P_\nu\rangle'}
{\langle P_\lambda, P_\lambda\rangle'}
\\
&=
\frac
{\langle P_{\hat\mu}, P_{\hat\lambda}P_\nu\rangle'}
{\langle P_\lambda, P_\lambda\rangle'}
\end{align*}
by \eqref{eq_4.5}. Hence
\begin{align*}
\langle P_\lambda, P_\lambda\rangle'
f_{\mu\nu}^\lambda 
&=
\langle P_{\hat\mu}, P_{\hat\mu}\rangle'
f_{\hat\lambda\nu}^{\hat\mu} 
\\
&=
\langle P_{\mu}, P_{\mu}\rangle'
f_{\hat\lambda\nu}^{\hat\mu}.
\end{align*}

Since $Q_{\lambda/\mu} = \sum_\nu f_{\mu\nu}^\lambda Q_\nu$, the result follows.\footnote{See Additional observation 2 at the end of this section.}
\end{proof}

From \eqref{eq_6.28} and \eqref{eq_6.25} we have
\begin{align*}
\frac{\varepsilon_n(J_\lambda^*)}{\varepsilon_{n-1+\alpha}(J_{\hat\lambda})}
J_{\lambda/\mu}
&=
\frac{\langle P_\lambda,P_\lambda \rangle ''}{h_{\hat\lambda}' h_\mu'} Q_{\lambda/\mu}
\\
&=
\frac{\langle P_{\hat\mu},P_{\hat\mu} \rangle ''}{h_{\hat\lambda}' h_\mu'} Q_{\hat\mu/\hat\lambda}
\\
&=
\frac{\varepsilon_n(J_\mu^*)}{\varepsilon_{n+1-\alpha}(J_{\hat\mu})}
J_{\hat\mu/\hat\lambda}
\end{align*}
from which \eqref{eq_6.27} follows. Hence
\begin{flalign}
\mbox{If \eqref{eq_6.24} and \eqref{eq_6.25} hold, then} &&
\label{eq_6.29} 
\end{flalign}
$$
_1F_0(a;1+x,y;\alpha) =|1-y|^{-a} \mbox{} _1F_0(a;x,\tfrac{y}{1-y};\alpha)
$$
for all $a,x,y,\alpha$. $\Box$
\bigskip

\noindent\underline{Remark}: The conjecture \eqref{eq_6.25} (from Ch.\ VI \S9) can be reformulated as follows:
\begin{equation} 
\langle |1-x|^{-p}, \mbox{} _1F_0(nk;x,y;\alpha) \rangle'' = |1-y|^{-nk}.
\label{eq_6.25p}
\tag{6.25$'$} 
\end{equation}

For 
\begin{align*}
|1-x|^{-p} 
&=
\sum_\lambda \alpha^{|\lambda|} (p)_\lambda J_\lambda^*(x) 
\\
&=
\sum_\lambda \varepsilon_{n-1+\alpha} (J_\lambda) J_\lambda^*(x) 
\\
&=
\sum_\lambda \varepsilon_{n-1+\alpha} (Q_\lambda) P_\lambda(x) 
\end{align*}
and
\begin{align*}
_1F_0(nk;x,y;\alpha) &= \sum_\mu \alpha^{|\mu|} (nk)_\mu
\frac{J_\lambda(x)}{J_\lambda(1_n)} J_\lambda^* (y)
\\
&=
\sum_\mu P_\lambda(x) Q_\lambda(y)
\end{align*}
so that the left hand side of \eqref{eq_6.25p} is equal to
\begin{align*}
\sum_\lambda\varepsilon_{n-1+\alpha}(Q_\lambda)\langle P_\lambda,P_\lambda\rangle''
Q_\lambda(y) &=
\sum_\lambda \varepsilon_n(P_\lambda) Q_\lambda(y) \quad \mbox{ by \eqref{eq_6.25}}\\
&=|1-y|^{-nk}. \quad\Box
\end{align*}

Note that 
$$
_1F_0(nk;x,y;\alpha) = \prod_{i,j} (1-x_iy_j)^{-k} = |1-x\otimes y|^{-k}.
$$
If we write
$$
f(t)=\prod \left(\frac{1-y_i}{1-ty_i} \right)^k,
$$
then \eqref{eq_6.25p} takes the form
$$
\langle |1-x|^{-p},f(x_1)\cdots f(x_n)\rangle'' = 1.
$$

\subsection*{Additional observation 1}

The "formal degree" is
\begin{eqnarray*}
d_{\lambda}(\alpha) &=& \langle\Omega_{\lambda},\Omega_{\lambda}\rangle''^{\,-1} \\
&=& \varepsilon_n(P_{\lambda})\varepsilon_{n-1+\alpha}(Q_{\lambda}) \\
&=& \prod_{i<j}\frac{(\xi_i-\xi_j)_k(\xi_i-\xi_j+1-k)_k}{(k(j-1))_k(k(j-1)+1-k)_k}
\end{eqnarray*}
where $\xi=\lambda+k\delta$,   $\delta = (n-1,\ldots,1,0)$

So 
$$d_{\lambda}(\alpha) = \prod_{i\neq j}\frac{(\xi_i-\xi_j)_k}{(k\delta_i-k\delta_j)_k}.$$
This is another version of \eqref{eq_6.25}.
\bigskip

\subsection*{Additional observation 2}

From \eqref{eq_6.28} we have
$$\frac{\langle P_{\lambda},P_{\lambda}\rangle'}{\langle P_{\mu},P_{\mu}\rangle'} = \frac{Q_{\hat{\mu}/\hat{\lambda}}}{Q_{\lambda/\mu}} =  \frac{\phi_{\hat{\mu}/\hat{\lambda}}}{\phi_{\lambda/\mu}}$$
if $|\lambda - \mu| = 1$.

This \underline{must} lead to another proof of the conjecture \eqref{eq_6.25}, by induction on $|\lambda|$.
\newpage

\section*{Duality}

The formula is
$$
\omega_\alpha \,\mbox{} _pF_q(\underline{a}; \underline{b};x;\alpha)=
\mbox{}_pF_q(-\alpha\underline{a};-\alpha\underline{b}; (-1)^{p-q} \alpha^{q-p+1}x;\alpha^{-1}).
$$
\begin{proof}
We have
$$
_pF_q(\underline{a};\underline{b};x;\alpha)=\sum_\lambda \frac{(\underline{a})_\lambda}
{(\underline{b})_\lambda} \alpha^{|\lambda|} J_\lambda^*(x;\alpha).
$$
Since
$$
\omega_\alpha P_\lambda(x;\alpha) = Q_{\lambda'}(x;\alpha^{-1})
$$
and $J_\lambda^*(x;\alpha)=P_\lambda(x;\alpha)/h'_\lambda(\alpha)$, it follows that
\begin{align*}
\omega_\alpha J_\lambda^*(x;\alpha) & = Q_{\lambda'}(x;\alpha^{-1})/h'_\lambda(\alpha)
\\
&=\alpha^{-|\lambda|} Q_{\lambda'}(x;\alpha^{-1})/h_{\lambda'}(\alpha^{-1})
\\
&=\alpha^{-|\lambda|} J_{\lambda'}^*(x;\alpha^{-1}).
\end{align*} 
Next,
\begin{align*}
(a;\alpha)_\lambda &= \prod_{i=1}^n (a-(i-1)\alpha^{-1})_{\lambda_i}
\\
&=\prod_{(i,j)\in\lambda} (a-(i-1)\alpha^{-1} + j-1)
\\
&=(-\alpha^{-1})^{|\lambda|}\prod_{(i,j)\in\lambda'} (-a\alpha-(i-1)\alpha + j-1)
\\
&=(-\alpha^{-1})^{|\lambda|} (-a\alpha;\alpha^{-1})_{\lambda'}
\end{align*}

Hence
\begin{align*}
\omega_\alpha\; \mbox{} _pF_q(\underline{a};\underline{b};x;\alpha) &=
\sum_\lambda \frac{(-\alpha \underline{a};\alpha^{-1})_{\lambda'}}
                                  {(-\alpha \underline{b};\alpha^{-1})_{\lambda'}}
                                  (-\alpha^{-1})^{(p-q)|\lambda|} J_{\lambda'}^* (x;\alpha^{-1})
\\
&= \mbox{} _pF_q(-\alpha \underline{a};-\alpha \underline{b}; (-1)^{p-q} \alpha^{q-p+1} x; \alpha^{-1}).
\end{align*}
\end{proof}

\newpage

\section{Gauss \& Saalschutz again}
From \eqref{eq_6.21} we have
\begin{equation}
_2F_1(a,b;c;y;\alpha) = 
\frac{\Gamma_n(c;\alpha)}{c_n'(\alpha) \Gamma_n(a;\alpha) \Gamma_n(c-a;\alpha)}
\left(\int_0^1 \right)^n \mbox{} _1F_0(b;x,y;\alpha) |x|^{a-p} |1-x|^{c-a-p} 
|\Delta(x)|^{2k} \mathrm{d}x.
\label{eq_7.1} 
\end{equation}
If we interchange $x$ and $1-x$ in the integrand, and use \eqref{eq_6.29}, we shall obtain, just as in 
\S3,
\begin{equation} 
_2F_1(a,b;c;y;\alpha) = |1-y|^{-b} \mbox{}_2F_1(c-a,b;c;-y(1-y)^{-1};\alpha). 
\label{eq_7.2} 
\end{equation}

Reiterating then with $b,c-a$ in place of $a,b$ will give Euler's relation
\begin{equation} 
\label{eq_7.3}
_2F_1(a,b;c;y;\alpha)=|1-y|^{c-a-b}\,_2F_1(c-a,c-b;c;y;\alpha). 
\end{equation}
Setting $y=1$ in \eqref{eq_7.1} gives
\begin{flalign}
\label{eq_7.4}
\mbox{(Gauss)}\quad\quad _2F_1(a,b;c;1_n,\alpha) = \frac{\Gamma_n(c,\alpha)\Gamma_n(c-a-b,\alpha)}{\Gamma_n(c-a;\alpha)\Gamma_n(c-b;\alpha)}&&
\end{flalign}
and \eqref{eq_7.3} (which of course assumes \eqref{eq_6.25}) will then give, just as in \S4, 
\begin{flalign}
\label{eq_7.5}
\mbox{(Saalschutz)} \quad\quad \mbox{Let  } \underbar{$a$}= (a_1, a_2, a_3), \underbar{$b$}=(b_1, b_2)&&
\end{flalign}
where 
\begin{itemize}
\item[(i)] $\left(\sum_1^3 a_i\right)+p=\sum b_i$
\item[(ii)] one of the $a_i$ is a negative integer.
\end{itemize}
Then
$$_3F_2(\underbar{$a$}; \underbar{$b$};1_n;\alpha)=\prod_I \Gamma_n(b_1 -a_I; \alpha)^{(-1)|I|+1},$$
where the product is taken over the 8 subsets $I$ of $\{1,2,3\}$, and $a_I=\sum_{i\in I}a_i$.
\newpage

\section{Bessel functions \& Hankel transform}
We define (following Herz) the Bessel function of order $a$ by

\begin{eqnarray} 
\label{eq_8.1}
A_a(x)
&=&
A_a^{(n)}(x;\alpha)=\Gamma_n(a+p;\alpha)^{-1} \mbox{}_0F_1(a+p;-x;\alpha)
\\ 
&=&
\sum_\lambda \frac{(-\alpha)^{|\lambda|}J_\lambda ^*(x;\alpha)}{\Gamma_n (a+p;\lambda;\alpha)},
\nonumber
\end{eqnarray}
and the Bessel kernel by

\begin{eqnarray} 
\label{eq_8.2} 
A_a(x,y)
&=&
A_a^{(n)}(x;y;\alpha)
\\
&=&
\Gamma_n(a+p;\alpha)^{-1} \mbox{}_0F_1(a+p;-x,y;\alpha)  
\nonumber
\\
&=&
\sum_\lambda \frac{(-\alpha)^{|\lambda|}\Omega_\lambda(x;\alpha)J_\lambda^*(y;\alpha)}{\Gamma_n(a+p;\lambda;\alpha)}
\nonumber 
\end{eqnarray}

\noindent
\underline{Notation} : Let
$$
\mathrm{d}\mu(x)=c_n'(\alpha)^{-1}|\Delta(x)|^{2k}\mathrm{d}x$$
 $$e(x,y)=\mbox{}_0F_0(x,y;\alpha)$$
so that $e(1,y)=e^{\tr (y)}$ and

\begin{equation} 
\label{eq_8.3}
e(x,y)e(1,y)=e(x+1,y).
\end{equation}
Assume conjecture \eqref{conjC}.
\begin{equation}
\int e(-x,y)|x|^a \Omega_{\lambda}(x)\mathrm{d}\mu(x)=\Gamma_n (a+p;\lambda)|y|^{-a-p} \Omega_{\lambda}(y^{-1})
\label{conjC}
\tag{C}
\end{equation}
(here and in future we shall write $\int$ for
$(\int^{\infty}_0)^n$). 

Then we have
\begin{equation} 
\label{eq_8.4}
\int e(-x,y)A_a(x,z)|x|^a\mathrm{d}\mu(x)=|y|^{-a-p}e(-y^{-1},z)
\end{equation}
from \eqref{conjC} and the definition \eqref{eq_8.2}, by integrating term by term.

Also from \eqref{eq_6.21} we have
\begin{equation} 
\label{eq_8.5}
\frac{1}{\Gamma_n(b;\alpha)} \left(\int^1_0\right)^n A_a(x,y;\alpha)|x|^a|1-x|^{b-p}\mathrm{d}\mu(x)=A_{a+b}(y)
\end{equation}
-- a sort of addition theorem.

Likewise, for the function $_1F_1$ we have
\begin{equation} 
\label{eq_8.6}
\int e^{-\tr x}\mbox{} _1F_1(a;b;x,y;\alpha) |x|^{b-p} |1-x|^{a-b-p}\mathrm{d}\mu(x)
=B_n(b,a-b;\alpha)|1-y|^{-a},
\end{equation} 
where $B_n$ is the beta-function, and

\begin{equation}
_1F_1(a;b;y;\alpha)=
B_n(a,b-a;\alpha)^{-1} 
\left(\int^1_0\right)^ne(x,y)|x|^{a-p}|1-x|^{b-a-p}\mathrm{d}\mu(x).
\label{eq_8.7}
\end{equation}
Replacing $x$ by $1-x$ and using \eqref{eq_8.3}, we obtain Kummer's relation

\begin{equation} 
\label{eq_8.8}
_1F_1(a;b;y;\alpha)=e^{\tr y}\mbox{}_1F_1(b-a;b;-y;\alpha)
\end{equation}
Again following Herz, we define the \underline{Laguerre function}

\begin{equation} 
\label{eq_8.9}
L^{(a)}_b(x;\alpha)=\frac{\Gamma_n(a+b+p;\alpha)}{\Gamma_n(a+p;\alpha)}\mbox{}_1F_1(-b;a+p;x;\alpha)
\end{equation}
or equivalently, by \eqref{eq_8.8}

\begin{eqnarray} 
\label{eq_8.10}
L^{(a)}_b(x;\alpha)
&= &
\frac{\Gamma_n(a+b+p;\alpha)}{\Gamma_n(a+p;\alpha)}e^{\tr (x)}  \mbox{}_1F_1(a+b+p;a+p;-x;\alpha)
\\
&= &
e^{\tr (x)}\sum_\lambda \frac{\Gamma_n(a+b+p;\lambda)}{\Gamma_n(a+p;\lambda)}(-\alpha)^{|\lambda|}J^*_\lambda(x;\alpha),
\nonumber
\end{eqnarray}
\eqref{eq_6.20}, \eqref{eq_6.21} applied to the right hand side of \eqref{eq_8.10} give respectively

\begin{equation} 
\label{eq_8.11}
L^{(a)}_b(y;\alpha)=e^{\tr y}\int e^{-\tr x} A_a(x,y)|x|^{a+b}\mathrm{d}\mu(x),
\end{equation}
a formula that we shall generalize later, and

\begin{equation} 
\label{eq_8.12}
L^{(a)}_b(y;\alpha)=\Gamma_n(-b,\alpha)^{-1} e^{\tr y}\left(\int^1_0\right)^n e(-x,y)|x|^{a+b}|1-x|^{-b-p}\mathrm{d}\mu(x).
\end{equation}

\subsection*{Hankel transform}
The Hankel transform of index $a$ is defined by

\begin{equation} 
\label{eq_8.13}
(H_af)(y)=\int A_a(x,y)|x|^a f(x)\mathrm{d}\mu(x).
\end{equation}
If we define 
$$e_y(x)=e(-x,y)$$
then \eqref{eq_8.4} says that

\begin{equation} 
\label{eq_8.14}
H_a(e_y)=|y|^{-a-p}e_{y^{-1}}.
\end{equation}
From this it follows that $H^2_a=1$ on the linear space $U$ spanned by the functions $e_y, y>0$. The space $U$ is a subspace of
$$L_a^2=\left\{f : \int|f(x)|^2|x|^a\mathrm{d}\mu(x)<\infty\right\}$$
which carries a scalar product
$$\langle f,g \rangle_a=\int f(x)g(x)|x|^a\mathrm{d}\mu(x)$$
(if we are dealing with real-valued functions). Presumably $e_y\in L^2_a$ for $y>0$, but this remains to be proved, like the conjecture (C) for arbitrary $\alpha$. 

Let us compute the scalar product 
\begin{eqnarray*}
\langle e_y,e_z\rangle _a
&=&
\int e(-x,y)e(-x,z)|x|^a\mathrm{d}\mu(x)
\\
&=&
\int e(-x,y)\left(\sum_{\lambda}(-\alpha)^{|\lambda|}\Omega_{\lambda}(x)J^*_{\lambda}(z)\right)|x|^a \mathrm{d}\mu(x).
\end{eqnarray*}
On integrating term by term and using (C) we obtain
$$|y|^{-a-p}\Gamma_n(a+p)\sum_{\lambda}(-\alpha)^{|\lambda|}(a+p)_{\lambda}\Omega_{\lambda}(y^{-1})J^*_{\lambda}(z)$$
so that

\begin{eqnarray}
\label{eq_8.15}
\langle e_y,e_z\rangle _a
&=&
|y|^{-(a+p)}\Gamma_n (a+p)\cdot \mbox{} _1F_0(a+p;-y^{-1},z)
\\
&=&
|z|^{-(a+p)}\Gamma_n (a+p)\cdot \mbox{} _1F_0(a+p;-y,z^{-1})
\nonumber
\end{eqnarray}
on interchange of $y$ and $z$. From this and \eqref{eq_8.14} we have

\begin{equation} 
\label{eq_8.16}
\langle H_ae_y, H_ae_z \rangle _a=\langle e_y, e_z\rangle _a
\end{equation}
so that $H_a$ is an isometry on $U$. Moreover, if we assume (why not?) that the Laplace transform is injective, then $\langle f,e_y\rangle _a=0$ for all $y>0$ implies that $f=0$ in $L^2_a$, and therefore $U$ is dense in the Hilbert space $L^2_a$. So finally $H_a$ extends to a self-adjoint involution on $L^2_a$.

For $f\in L^2_a$, let $\phi_f$ denote the Laplace transform of $|x|^a f(x)$: that is to say,
$$ \phi_f(y)=\int e(-x,y)|x|^af(x) \mathrm{d}\mu (x)=\langle e_y,f\rangle _a $$

\begin{flalign} 
\mbox{Suppose that $\phi_g(y)$ is an absolutely convergent integral for all $y>0$.}&& 
\label{eq_8.17}
\end{flalign}
\begin{enumerate}
\item[(i)] If $g=H_af$ then
\begin{equation}
 \phi_g(y)=|y|^{-a-p}\phi_f(y^{-1}).
\tag{8.17.1}
\label{eq_8.17.1}
\end{equation}
\item[(ii)] Conversely, if \eqref{eq_8.17.1} holds, then $g\in L^2_a$ and $g=H_af$.
\end{enumerate}

\begin{proof}
Let $g_1=H_af$, then
\begin{eqnarray*}
\phi_{g_1}(y)
&=&
\langle e_y,g_1\rangle _a=\langle H_ae_y,f\rangle _a \mbox{ \quad by \eqref{eq_8.16}}
\\
&=&
|y|^{-a-p}\langle e_{y^{-1}},f\rangle _a \mbox{ \quad by \eqref{eq_8.14}}
\\
&=&
|y|^{-a-p}\phi_f(y^{-1}).
\end{eqnarray*}
This proves (i). In case (ii) we have $\phi_g=\phi_{g_1}$, whence $\langle e_y,g\rangle _a=\langle e_y,g_1\rangle _a$ for all $y>0$; hence $g=g_1$ on $L_a^2$.
\end{proof}

\subsection*{Laguerre polynomials}
 As an example of \eqref{eq_8.17}, let us take
\begin{eqnarray*}
f(x)
&=&
e^{-\tr (x)}J^*_{\lambda}(x;\alpha) \\
g(y)
&=&
(H_af)(y)=e^{-\tr (y)}L_{\lambda}^{(a)}(y) \mbox{, say.}
\end{eqnarray*}
We shall see presently that the $L_{\lambda}^{(a)}$ are symmetric polynomials, called Laguerre polynomials (they depend on $\alpha$ as well as $a$).

We have
\begin{eqnarray*}
\phi_f(y)
&=&
\int e(-x,y)e^{-\tr x} |x|^a J^*_{\lambda} (x;\alpha)\mathrm{d}\mu(x)\\
&=&
\int e(-x, 1+y) |x|^a J^*_{\lambda}(x;\alpha)\mathrm{d}\mu(x)\\
&=&
\Gamma_n(a+p;\lambda)|1+y|^{-a-p}J^*_{\lambda}((1+y)^{-1};\alpha)
\end{eqnarray*}
if \eqref{conjC} is true. By \eqref{eq_8.17} this is equal to
\begin{eqnarray*}
|y|^{-a-p}\phi_g(y^{-1})&=&|y|^{-a-p} \int e(-x,y^{-1})e^{-\tr x}L^{(a)}_{\lambda}(x)|x|^a \mathrm{d}\mu(x)
\\
& =&|y|^{-a-p} \int e(-x, 1+y^{-1})|x|^a L^{(a)}_{\lambda}(x)\mathrm{d}\mu(x)
\end{eqnarray*}
so that we have
$$
\int e(-x,1+y^{-1})
|x|^a L^{(a)}_{\lambda} (x)\mathrm{d}\mu(x)
= \Gamma_n(a+p;\lambda)\left|\frac{y}{1+y}\right|^{a+p}J^*_{\lambda}((1+y)^{-1};\alpha)
$$
Put $z=(1+y^{-1})^{-1}=y/(1+y)$, so that $(1+y)^{-1}=1-z$ ; then we have

\begin{equation}
\int e(-x,z^{-1})|x|^{a}L^{(a)}_{\lambda}(x)\mathrm{d}\mu(x)=\Gamma_n(a+p;\lambda)|z|^{a+p}J^*_{\lambda}(1-z), 
\label{eq_8.18}
\end{equation}
i.e., the Laplace transform of $|x|^{a}L^{(a)}_{\lambda}(x)$ is
$$\Gamma_n(a+p;\lambda)|z|^{-a-p}J^*_{\lambda}(1-z^{-1}).$$

But we have proved that
$$\Omega_{\lambda}(1-z)=\sum_{\mu \subset \lambda} (-1)^{|\mu|} \varepsilon (J_{\lambda/\mu})\Omega_{\mu}(z)$$
so that we have

\begin{equation} 
J_{\lambda}^*(1-z)=\sum_{\mu \subset \lambda} (-1)^{|\mu|} a^{(n)}_{\lambda/\mu} J^*_{\mu}(z)
\label{eq_8.19}
\end{equation}
with
\begin{eqnarray}
\label{eq_8.20}
a^{(n)}_{\lambda/\mu}
&=& \varepsilon(J_{\lambda/\mu})J_{\lambda}^*(1_n)/J^*_{\mu}(1_n) \\
&=& \varepsilon(P_{\lambda/\mu})Q_{\lambda}(1_n)/Q_{\mu}(1_n). 
\nonumber
\end{eqnarray}
It follows that
\begin{eqnarray*}
\Gamma_n(a+p;\lambda)^{-1} \int e(-x,z^{-1})|x|^a L^{(a)}_{\lambda}(x)\mathrm{d}x &=& |z|^{a+p} \sum_{\mu \subset \lambda} (-1)^{|\mu|}a_{\lambda/\mu}J^*_{\mu}(z) \\
&=& \sum_{\mu \subset \lambda} \frac{(-1)^{|\mu|}}{\Gamma_n(a+p;\mu)} a_{\lambda/\mu} \int e(-x,z^{-1})|x|^a J^*_{\mu}(x)\mathrm{d}\mu(x)
\end{eqnarray*}
and therefore, on the assumption that the Laplace transform is injective, we should have

\begin{equation}
L^{(a)}_{\lambda}(x;\alpha)=\sum_{\mu \subset \lambda} (-1)^{|\mu|}a^{(n)}_{\lambda/\mu}(\alpha)\frac{(a+p;\alpha)_\lambda}{(a+p;\alpha)_\mu}J^*_{\mu}(x;\alpha)
\label{eq_8.21}
\end{equation}
a symmetric (but not homogeneous) polynomial in $x_1,\ldots,x_n$. 

\noindent \underline{Remark}. From the definition
$$
L^{(a)}_{\lambda}(y)=e^{\tr (y)} \int e^{-\tr (x)} A_a(x,y)|x|^aJ_{\lambda}^*(x;\alpha)\mathrm{d}\mu(x)
$$
and \eqref{eq_8.11} it follows that (up to a scalar factor) $L_b^{(a)}$ is the same as $L^{(a)}_{(b,\ldots,b)}$.

Next, we have
\begin{equation}
e(-x^{-1},y)=|x|^{a+p}e^{-\tr y} \sum_{\lambda} \alpha^{|\lambda|}\Omega_{\lambda}(1-x)L^{(a)}_{\lambda}(y)
\label{eq_8.22}
\end{equation}

\begin{proof} 
The right hand side of \eqref{eq_8.22} is the Hankel transform (with respect to $y$) of the series
\begin{eqnarray*}
&&|x|^{a+p}e^{-\tr z} \sum_{\lambda} \alpha^{|\lambda|}\Omega_{\lambda}(1-x)J^*_{\lambda}(z) \\
&=&|x|^{a+p}e^{-\tr z} e(1-x,z)=|x|^{a+p}e(-x,z) \\
&=&|x|^{a+p}e_x(z).
\end{eqnarray*}
The result now follows from \eqref{eq_8.14}.
\end{proof}

\begin{flalign}
\label{eq_8.23}
\mbox{The functions}&& 
\end{flalign}
$$f_{\lambda}(x)=e^{-\tr x} L_{\lambda}^{(a)}(2x)$$
are eigenfunctions of the Hankel transform $H_a$ (the eigenvalue being $(-1)^{|\lambda|}$).

\begin{proof} We have
\begin{eqnarray*}
\phi_{f_{\lambda}}(y) &=& \int e(-x,y)e^{-\tr (x)}|x|^a L^{(a)}_{\lambda}(2x)\mathrm{d}\mu(x) \\
&=& \int e(-x,1+y)|x|^aL^{(a)}_{\lambda}(2x)\mathrm{d}\mu(x) \\
&=& 2^{-n(a+p)} \int e\left(-x, \frac{1}{2}(1+y)\right)|x|^aL^{(a)}_{\lambda}(x)\mathrm{d}\mu(x)
\end{eqnarray*}
(since $|x|^{-p}\mathrm{d}\mu(x)$ is translation-invariant). \\
Put $\frac{1}{2}(1+y)=z^{-1}$, so that $1-z=1-\frac{2}{1+y}=\frac{y-1}{y+1}$; then from \eqref{eq_8.18} we have
$$
\phi_{f_{\lambda}} (y)=\Gamma_n(a+p;\lambda) |1+y|^{-a-p}J^*_{\lambda} \left(\frac{y-1}{y+1}\right)$$
from which it follows that
$$\phi_{f_{\lambda}}(y^{-1})=|y|^{a+p}(-1)^{|\lambda|}\phi_{f_{\lambda}}(y)$$
and hence, by \eqref{eq_8.17}, that $H_af_{\lambda}=(-1)^{|\lambda|}f_{\lambda}$.
\end{proof}

\underline{Question}: Is it the case that $\langle f_{\lambda},f_{\mu}\rangle _a=0$ if $\lambda \neq \mu$, i.e., is
$$
\int e^{-\tr x} L^{(a)}_{\lambda}(x)L^{(a)}_{\mu}(x)|x|^a \mathrm{d}\mu(x)=0
$$
if $\lambda \neq \mu$?

The answer is yes. From \eqref{eq_8.22} we have, on multiplying both sides by $|y|^aL^{(a)}_{\mu}(y)$ and integrating term by term

\begin{align*}
\int e(-x^{-1},y)L^{(a)}_{\mu} (y)|y|^a \mathrm{d}\mu (y) & =  |x|^{a+p} \sum_{\lambda} \alpha^{|\lambda|}\Omega_{\lambda}(1-x)\int e^{-\tr y} L^{(a)}_{\lambda}(y)L^{(a)}_{\mu}(y)|y|^a \mathrm{d}\mu(y) \\
& =  2^{n(a+p)} |x|^{a+p} \sum_{\lambda} \alpha^{|\lambda|} \Omega_{\lambda}(1-x)\langle f_{\lambda}, f_{\mu}\rangle _a
\tag{1}
\end{align*}

$$\left( \mbox{Since}\mbox{ } \mbox{  }\mbox{  }\langle f_{\lambda}, f_{\mu}\rangle _a=\int e^{-2\tr x} L_{\lambda}^{(a)}(2x)L^{(a)}_{\mu}(2x)|x|^a \mathrm{d}\mu(x) = 2^{-n(a+p)} \int e^{-\tr x} L^{(a)}_{\lambda}(x) L^{(a)}_{\mu}(x)|x|^a \mathrm{d}\mu(x) \right).$$

But on the other hand, by \eqref{eq_8.18}, we have
\begin{equation}
\int e(-x^{-1}, y)  L^{(a)}_{\mu}(y)|y|^a \mathrm{d}\mu (y)=\Gamma_n (a+p;\mu)|x|^{a+p}J^*_{\lambda}(1-x).
\tag{2}
\end{equation}

From (1) and (2) it follows that

\begin{equation}
\langle f_{\lambda}, f_{\mu}\rangle _a=0 \mbox{ }  \mbox{if}\mbox{ } \lambda  \neq  \mu
\label{eq_8.24}
\end{equation}
and
$$
2^{n(a+p)}\alpha^{|\mu|}\Omega_{\mu}|f_{\mu}|^2_a=\Gamma_n(a+p;\mu)J^*_{\mu}
$$
so that

\begin{eqnarray}
\label{eq_8.25}
|f_{\mu}|^2_a &=& 2^{-n(a+p)}\alpha^{-|\mu|}\Gamma_n (a+p;\mu )J^*_{\mu}(1_n)\\ 
&=& 2^{-n(a+p)} \frac{\Gamma_n(a+p;\mu)(nk)_{\mu}}{h_{\mu}(\alpha)h'_{\mu}(\alpha)}
\nonumber
\end{eqnarray}

Next we have
\begin{equation}
e^{\tr (y)}A_a(x,y)=\sum_{\lambda} \frac{\alpha^{|\lambda|}L^{(a)}_{\lambda}(x)\Omega_{\lambda}(y)}{\Gamma_n(a+p;\lambda)}
\label{eq_8.26}
\end{equation}

\begin{proof} We can expand the left hand side of \eqref{eq_8.26} in a series of Laguerre polynomials, say
\begin{equation}
e^{\tr (y)} A_a(x,y)=\sum_{\lambda}L^{(a)}_{\lambda}(x)u_{\lambda}(y)
\tag{$\ast$}
\end{equation}
To determine the coeficients, we use orthogonality \eqref{eq_8.24}: multiply both sides by $e^{-\tr (x)}L^{(a)}_{\mu}(x)|x|^a$ and integrate with respect to $x$.\\
The right hand side of ($\ast$) then becomes
$$
\alpha^{-|\mu|}\Gamma_n(a+p;\mu)J^*_{\mu}(1_n)u_{\mu}(y)
$$
and the left side becomes
$$
e^{\tr y} \int e^{-\tr x}A_a(x,y)L^{(a)}_{\mu}(x)|x|^a\mathrm{d}\mu(x)=J^*_{\mu}(y)
$$
by definition of the Hankel transform. So we have
$$
u_{\mu}(y)=\alpha^{|\mu|}\Omega_{\mu}(y)/\Gamma_n(a+p;\mu)
$$
which proves \eqref{eq_8.26}.
\end{proof}

\begin{flalign}
\sum_{\lambda}\alpha^{|\lambda|}\Omega_{\lambda}(x)L^{(a)}_{\lambda}(y)=|1-x|^{-a-p}e\left(-\frac{x}{1-x},y\right).&&
\label{eq_8.27}
\end{flalign}

\begin{proof} This follows from \eqref{eq_8.22} on replacing $x$ by $1-x$, since
$$
e^{\tr y}\, e\left(-\frac{1}{1-x}, y\right)=e\left(1-\frac{1}{1-x},y\right)=e\left(\frac{-x}{1-x},y\right).
$$
\end{proof}

From \eqref{eq_8.25} we have
$$
\Gamma_n(a+p;\lambda)=\frac{2^{n(a+p)}\alpha^{|\lambda|}|f_{\lambda}|^2_a}{J^*_{\lambda}(1_n)}
$$
and therefore

\begin{equation}
A_a(-x,y)=\sum_{\lambda}\frac{\alpha^{|\lambda|}\Omega_{\lambda}(x)J_{\lambda}^{*}(y)}{\Gamma_n(a+p;\lambda)}=2^{-n(a+p)}\sum_{\lambda}\frac{J_{\lambda}^*(x)J_{\lambda}^{*}(y)}{|f_{\lambda}|^2_a}.
\label{eq_8.28}
\end{equation}
\bigskip

Another approach to the Laguerre polynomials is as eigenfunctions of a differential operator (i.e., as limits of Jacobi polynomials: $$ L^{(a)}_{\lambda}(x;\alpha)=\lim_{b \rightarrow \infty} G^{(a,b)}_{\lambda}\left(\frac{x}{b};\alpha\right).\;)$$
If we replace $x$ by $b^{-1}x$ in the operator $E_{a,b}$, \& let $b\rightarrow \infty$, we get
$$E_af=-|x|^{-a}e(x)\Delta(x)^{-2k}\sum_{i=1}^n D_i(x_i|x|^ae(-x)\Delta(x)^{2k}D_if)$$
which when computed explicitly gives
\begin{eqnarray*}
-E_a
&=& 2U_{12}+2kV_1+(a+1)U_{01}-U_{11} \\
&=& \Box _1+(a+p)U_{01}-U_{11}
\end{eqnarray*}
and hence
$$E_a\Omega_{\lambda}=|\lambda|\Omega_{\lambda}-\sum_{\substack{\mu\subset\lambda\\|\lambda-\mu|=1}} \binom{\lambda}{\mu}(a+p+\rho(\lambda/\mu))\Omega_{\mu}$$
in which $a+p+\rho(\lambda/\mu)=(a+p)_{\lambda}/(a+p)_{\mu}$. \\
So $L^{(a)}_{\lambda}(x;\alpha)$ satisfies
$$E_aL^{(a)}_{\lambda}=|\lambda|L^{(a)}_{\lambda}$$
and if 
$$L_{\lambda}^{(a)}(x)=\sum_{\mu\subset\lambda}(-1)^{|\mu|}u_{\lambda\mu}\Omega_{\mu}$$
we obtain the recurrence relation
$$ |\lambda-\nu|u_{\lambda \nu}=\sum_{\mu}\binom{\mu}{\nu}\frac{(a+p)_{\mu}}{(a+p)_\nu}u_{\lambda\mu}$$
summed over $\mu$ such that $\lambda\supset\mu\supset \nu$ and $|\mu-\nu|=1$.\\
If $|\lambda-\nu|=r$ this gives explicitly (if $u_{\lambda\lambda}=1$)

$$u_{\lambda \nu}=\frac{1}{r!}\sum_{T}\prod^r_{i=1}\binom{\lambda^{(i-1)}}{\lambda^{(i)}}
\cdot
\frac{(a+p)_{\lambda}}{(a+p)_\nu}$$
summed over all standard tableaux
$$T: \quad \lambda=\lambda^{(0)}\supset\lambda^{(1)}\supset\cdots\supset\lambda^{(r)}=\nu.$$
In other words,
$$ u_{\lambda \nu} = \binom{\lambda}{\nu}\frac{(a+p)_{\lambda}}{(a+p)_\nu}$$
and therefore
$$ L^{(a)}_{\lambda}(x;\alpha)=\sum_{\mu\subset\lambda}(-1)^{|\mu|} \binom{\lambda}{\mu} \frac{(a+p)_{\lambda}}{(a+p)_\mu}\Omega_{\mu}(x;\alpha)$$
agreeing with \eqref{eq_8.21}, apart from the normalizing factor $J^{*}_{\lambda}(1_n)$. \\
So we have another proof of orthogonality.\\
\indent
In the 1-variable case we can recover the Jacobi polynomials from the Laguerre polynomials by Laplace transform (i.e., passing from $_1F_1$ to $_2F_1$)
$$G^{(a,b)}_n(y)=\int^{\infty}_0e^{-x}L_n^{(a)}(xy) \cdot \frac{x^{n+a+b}}{(n+a+b)!}\mathrm{d}x.$$
Is there an analogue of this in $n$ variables?
\newpage

\section*{Appendix: proof of \eqref{eq_6.15}}
let $\varepsilon:\Lambda\rightarrow {\mathbb Q}$ be the specialization defined by
$$\varepsilon(p_1)=1, \varepsilon(p_r)=0 \;\mbox{ if }\; r>1.$$
(Thus $\varepsilon(H(t))=\varepsilon(E(t))=e^t$.) 

\begin{unprop} Let $\lambda$ be a partition of length $\leq n$. Then

\begin{equation}
\Omega_{\lambda}(1_n+x)=\sum_{\mu\subset\lambda}\varepsilon(J_{\lambda/\mu})\Omega_{\mu}(x).
\tag{1}
\end{equation}
\end{unprop}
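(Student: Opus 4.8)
My plan is to transport the shift $x\mapsto 1_n+x$ through a reproducing kernel. In the case $\alpha=2$ the statement is immediate from the functional equation $\,_0F_0(1_n+x,y)=e^{\tr y}\,_0F_0(x,y)$ (the symmetric form of \eqref{eq_2.7}): writing $\Omega_\lambda(1_n+x)=\sum_\mu b_{\lambda\mu}\Omega_\mu(x)$, expanding both sides in the linearly independent families $\{\Omega_\mu(x)\}$ and $\{J^*_\lambda(y)\}$, and using $e^{p_1}J^*_\mu=\sum_\lambda\langle J_{\lambda/\mu},e^{p_1}\rangle_\alpha J^*_\lambda$ turns the comparison of the coefficient of $\Omega_\mu(x)J^*_\lambda(y)$ into $\alpha^{|\lambda|}b_{\lambda\mu}=\alpha^{|\mu|}\langle J_{\lambda/\mu},e^{p_1}\rangle_\alpha$, i.e. $b_{\lambda\mu}=\varepsilon(J_{\lambda/\mu})$ since $\langle J_{\lambda/\mu},e^{p_1}\rangle_\alpha=\alpha^{|\lambda-\mu|}\varepsilon(J_{\lambda/\mu})$. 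For general $\alpha$ the exponential kernel $e(x,y)=\,_0F_0(x,y;\alpha)$ has neither a product formula nor an integral representation, so this functional equation is itself equivalent to what we want. The one kernel that survives is the Cauchy kernel $\Pi(x,y)=\prod_{i,j}(1-x_iy_j)^{-k}=\,_1F_0(nk;x,y;\alpha)$ of \eqref{eq_6.10}, because it factorizes under the shift.

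So for general $\alpha$ I would proceed as follows. $(i)$ Expand $\Pi(x,y)=\sum_\lambda\Omega_\lambda(x)\,P_\lambda(1_n)\,Q_\lambda(y)$, using $P_\lambda=J_\lambda/h_\lambda$, $Q_\lambda=J_\lambda/h'_\lambda$ and $\langle P_\lambda,Q_\mu\rangle_\alpha=\delta_{\lambda\mu}$. $(ii)$ Record the elementary factorization coming from $1-(1+x_i)y_j=(1-y_j)\bigl(1-x_iy_j/(1-y_j)\bigr)$,
\[ \Pi(1_n+x,y)=\prod_j(1-y_j)^{-nk}\,\Pi\!\Big(x,\tfrac{y}{1-y}\Big), \]
where $y/(1-y)$ denotes the variables $y_j/(1-y_j)$. $(iii)$ Substitute the expansion of $(i)$ into both sides, write $\Omega_\lambda(1_n+x)=\sum_\mu b_{\lambda\mu}\Omega_\mu(x)$ on the left, and compare coefficients of the linearly independent $\Omega_\mu(x)$. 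This collapses the problem to a single identity purely in $y$, namely that the $Q_\lambda$-coefficient of the translated kernel equals a ratio of principal specializations:
\[ \big[Q_\lambda\big]\Big(\prod_j(1-y_j)^{-nk}\,Q_\mu(\tfrac{y}{1-y})\Big)=\varepsilon(J_{\lambda/\mu})\,\frac{P_\lambda(1_n)}{P_\mu(1_n)}. \]
When $\mu=\varnothing$ this reads $\Pi(1_n,y)=\sum_\lambda P_\lambda(1_n)Q_\lambda(y)$, which is the case $x=1_n$ of $(i)$ (equivalently \eqref{eq_6.8} with $a=nk$ and \eqref{eq_6.9} giving $P_\lambda(1_n)=\alpha^{|\lambda|}(nk)_\lambda/h_\lambda$), so the base case costs nothing.

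The displayed $y$-identity is where the real work sits, and I expect it to be the main obstacle: it is the full $Q$-expansion of the Cauchy kernel after the nonlinear substitution $y_j\mapsto y_j/(1-y_j)$, for which there is no term-by-term shortcut. I would prove it by induction on $|\lambda|-|\mu|$, feeding in the Jack--Pieri rule to pass from $\mu$ to $\mu+\square$: multiplying $Q_\mu(y/(1-y))$ by $k\sum_j y_j/(1-y_j)$ produces the $Q_{\mu+\square}(y/(1-y))$ on the left, while on the right the generalized binomial coefficients $\varepsilon(J_{\lambda/\mu})$ obey the matching branching recursion inherited from $p_1\,J_\mu=\sum_\nu(\cdots)J_\nu$, and the ratios $P_\lambda(1_n)/P_\mu(1_n)$ are controlled by \eqref{eq_6.9}. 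The clean, purely formal ingredient available throughout is the duality relation $e^{p_1}J^*_\mu=\sum_\lambda\langle J_{\lambda/\mu},e^{p_1}\rangle_\alpha J^*_\lambda$ (expand $e^{p_1}J^*_\mu$ in the dual basis and use $\langle J_\lambda,e^{p_1}J^*_\mu\rangle_\alpha=\langle J_{\lambda/\mu},e^{p_1}\rangle_\alpha$); the genuine difficulty is that this relation is adapted to the exponential kernel, whereas the shift could only be pushed through the Cauchy kernel, so that reconciling the two is exactly what the Pieri induction must accomplish.
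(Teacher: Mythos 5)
Your kernel manipulation in steps $(i)$--$(iii)$ is correct as far as it goes: the factorization $1-(1+x_i)y_j=(1-y_j)\bigl(1-x_iy_j/(1-y_j)\bigr)$ does give $\Pi(1_n+x,y)=\prod_j(1-y_j)^{-nk}\,\Pi(x,\tfrac{y}{1-y})$, and extracting coefficients of the linearly independent $\Omega_\mu(x)$ is legitimate. But because the expansion $\Pi(x,y)=\sum_\lambda\Omega_\lambda(x)P_\lambda(1_n)Q_\lambda(y)$ is invertible in both bases, the displayed $y$-identity is \emph{exactly equivalent} to the proposition, not a reduction of it: it is the statement \eqref{eq_6.24} rewritten in the dual variables. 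Indeed the paper itself derives precisely this substitution identity, $|1-y|^N\Omega_\mu(\tfrac{y}{1-y};\alpha)=\sum_{\nu\supset\mu}(-1)^{|\nu-\mu|}\binom{\hat\mu}{\hat\nu}\Omega_\nu(y;\alpha)$ (the computation following \eqref{eq_6.23}), as a \emph{consequence} of assuming \eqref{eq_6.24} --- the implication you need runs the other way. So everything rests on your sketched induction, and that step fails as described: multiplying by $k\sum_j y_j/(1-y_j)$ is multiplication by $k\,p_1\bigl(\tfrac{y}{1-y}\bigr)=k\sum_{m\ge 1}p_m(y)$, so while the Pieri rule controls the left side in the basis $Q_\nu(\tfrac{y}{1-y})$, comparing $Q_\lambda(y)$-coefficients on the right requires the expansion of $p_mQ_\lambda$ for \emph{every} $m\ge 1$ (a signed, border-strip-type rule for Jack polynomials), and you establish no matching recursion among the $\varepsilon(J_{\lambda/\mu})$ and the ratios $P_\lambda(1_n)/P_\mu(1_n)$ under such multipliers. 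The one formal tool you invoke, $e^{p_1}J^*_\mu=\sum_\lambda\langle J_{\lambda/\mu},e^{p_1}\rangle_\alpha J^*_\lambda$, is adapted to $p_1$ alone and does not touch the higher power sums. What you have, then, is a correct reformulation plus an unproven core, with only the trivial base case $\mu=\varnothing$ verified.

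For contrast, the paper's appendix proof avoids kernels entirely: Taylor-expand $\Omega_\lambda(1_n+x)=\sum_{r\ge0}\tfrac{1}{r!}D_n^r\Omega_\lambda(x)$ with $D_n=\sum_i\partial/\partial x_i$, use a chain lemma to reduce to the one-box case $D_n\Omega_\lambda=\sum_{|\lambda-\mu|=1}\varepsilon(J_{\lambda/\mu})\Omega_\mu$, note that the resulting identity (2$''$) is linear in an indeterminate $X$ replacing $n$, and check two values: $X=\infty$, where $D_X/X\to\partial/\partial p_1$ and $\partial P_\lambda/\partial p_1=P_{\lambda/(1)}$, and $X=l(\lambda)$, via $P_\lambda=e_nP_{\lambda_*}$ and the Pieri coefficients $\psi'_{\mu/\lambda_*}$. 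Notably, the manuscript's own verification at $X=n$ ends with the admission that the computation is ``still out by a factor $\alpha$'' --- that one-box comparison of Pieri coefficients against $\varepsilon(J_{\lambda/\mu})$ and principal specializations is exactly the kind of bookkeeping your induction would also have to carry out, so the difficulty you deferred is the same one the paper wrestles with; your route reorganizes it but does not circumvent it.
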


\begin{proof} Let $D_n=\sum^{n}_{i=1}\partial/\partial x_i ,$ then
$$\Omega_{\lambda}(1_n+x)=\sum_{r\geq0}\frac{1}{r!}D^r_n\Omega_{\lambda}(x)$$
so that (1) $\Leftrightarrow$
\begin{equation}
\frac{1}{r!}D_n^r\Omega_{\lambda}=\sum_{\substack{\mu\subset\lambda\\|\lambda-\mu|=r}}\varepsilon(J_{\lambda/\mu})\Omega_{\mu}\quad\quad\quad(r\geq 0).
\tag{1$'$}
\end{equation} 
We shall prove (1$'$) by induction on $r$. For the moment assume that it is true for $r=1$, i.e., that
\begin{equation}
D_n\Omega_{\lambda}=\sum_{\substack{\mu\subset\lambda\\|\lambda-\mu|=1}}\varepsilon(J_{\lambda/\mu})\Omega_{\mu}.
\tag{2}
\end{equation}
From (2) it follows by induction on $r$ that
\begin{equation}
D^r_n\Omega_{\lambda}=\sum\prod^r_{i=1}\varepsilon(J_{\mu^{(i-1)}/\mu^{(i)}})\Omega_{\mu}
\tag{3}
\end{equation}
summed over all chains of partitions
$$\lambda=\mu^{(0)}\supset\mu^{(1)}\supset\cdots\supset\mu^{(r)}=\mu$$
such that $|\mu^{(i-1)}-\mu^{(i)}|=1$ for $1\leq i \leq r$.
\end{proof}

\begin{lemmaA} Let $\mu \subset\lambda, |\lambda - \mu|=r$. Then
\begin{equation}
\varepsilon(J_{\lambda/\mu})=\frac{1}{r!}\sum \prod^r_{i=1}\varepsilon(J_{\mu^{(i-1)}/\mu^{(i)}})
\tag{4}
\end{equation}
summed over chains of partitions as above.
\end{lemmaA}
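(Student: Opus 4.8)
The plan is to realize $\varepsilon(J_{\lambda/\mu})$ by means of a single-box lowering operator on $\Lambda$ and to compute one coefficient of its $r$-th power in two different ways. Recall that for $\tau\subset\sigma$ the skew function $J_{\sigma/\tau}$ is determined by $\langle J_{\sigma/\tau},g\rangle_\alpha=\langle J_\sigma,J_\tau^*g\rangle_\alpha$, and that $\varepsilon(J_{\sigma/\tau})$ is the coefficient of $p_1^{|\sigma-\tau|}$ in $J_{\sigma/\tau}$. Since $p_1=J_{(1)}$ is the only symmetric function of degree $1$, in the single-box case $|\sigma-\tau|=1$ we have $J_{\sigma/\tau}=\varepsilon(J_{\sigma/\tau})\,p_1$, while $J_{(1)}^*=\alpha^{-1}p_1$ because $\langle p_1,p_1\rangle_\alpha=\alpha$. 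Let $D$ be the operator adjoint to multiplication by $J_{(1)}^*$, so that $\langle Df,g\rangle_\alpha=\langle f,J_{(1)}^*g\rangle_\alpha$; this is exactly the operator that skews off one box.

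First I would record the branching rule. The coefficient of $J_\rho$ in $DJ_\nu$ is $\langle J_\nu,J_{(1)}^*J_\rho^*\rangle_\alpha=\langle J_{\nu/\rho},J_{(1)}^*\rangle_\alpha$, which vanishes unless $\rho\subset\nu$ with $|\nu-\rho|=1$ (the skew function is zero otherwise, and the degrees must match), and in that case equals $\varepsilon(J_{\nu/\rho})$ since $J_{\nu/\rho}=\varepsilon(J_{\nu/\rho})J_{(1)}$ and $\langle J_{(1)},J_{(1)}^*\rangle_\alpha=1$. Thus $DJ_\nu=\sum\varepsilon(J_{\nu/\rho})J_\rho$ summed over single-box removals $\rho$ of $\nu$. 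Iterating $r$ times gives $D^rJ_\lambda=\sum\big(\prod_{i=1}^r\varepsilon(J_{\nu^{(i-1)}/\nu^{(i)}})\big)J_{\nu^{(r)}}$, summed over chains $\lambda=\nu^{(0)}\supset\nu^{(1)}\supset\cdots\supset\nu^{(r)}$ with $|\nu^{(i-1)}-\nu^{(i)}|=1$. Extracting the $J_\mu$-component, the coefficient of $J_\mu$ in $D^rJ_\lambda$ equals $\sum\prod_{i=1}^r\varepsilon(J_{\nu^{(i-1)}/\nu^{(i)}})$ over all such chains from $\lambda$ down to $\mu$, which is $r!$ times the right-hand side of (4).

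Second, I would compute the same coefficient directly. As $D^r$ is adjoint to multiplication by $(J_{(1)}^*)^r$, the coefficient of $J_\mu$ in $D^rJ_\lambda$ equals $\langle J_\lambda,(J_{(1)}^*)^rJ_\mu^*\rangle_\alpha=\langle J_{\lambda/\mu},(J_{(1)}^*)^r\rangle_\alpha=\alpha^{-r}\langle J_{\lambda/\mu},p_1^r\rangle_\alpha$. Expanding $J_{\lambda/\mu}$ in the power-sum basis, only its $p_1^r$-term survives the pairing, so this is $\alpha^{-r}\varepsilon(J_{\lambda/\mu})\langle p_1^r,p_1^r\rangle_\alpha=r!\,\varepsilon(J_{\lambda/\mu})$, using $\langle p_1^r,p_1^r\rangle_\alpha=r!\,\alpha^r$. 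Equating the two evaluations of the $J_\mu$-coefficient of $D^rJ_\lambda$ yields $r!\,\varepsilon(J_{\lambda/\mu})=\sum\prod_{i=1}^r\varepsilon(J_{\nu^{(i-1)}/\nu^{(i)}})$, which is (4).

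The step I expect to be the crux is the branching rule: one must be sure that $D$ lowers a partition by exactly one box, so that iterating it generates precisely the single-box chains indexing the sum and nothing coarser. This is forced by the degree-$1$ truncation inherent in pairing against $J_{(1)}^*$, together with the vanishing of $J_{\nu/\rho}$ unless $\rho\subset\nu$; once this is in hand, the only remaining work is the bookkeeping of the factor $r!$ and the powers of $\alpha$, which is routine.
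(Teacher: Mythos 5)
Your proof is correct, but it travels a genuinely different (dual) route from the paper's. The paper specializes $J_{\lambda/\mu}$ to exactly $r$ variables, quotes the several-variable chain expansion $J_{\lambda/\mu}(x_1,\ldots,x_r)=\sum\prod_{i=1}^r J_{\mu^{(i-1)}/\mu^{(i)}}(x_i)$ wholesale, and extracts the coefficient of the squarefree monomial $x_1\cdots x_r$: on the right this picks out exactly the one-box chains with weight $\prod_i\varepsilon(J_{\mu^{(i-1)}/\mu^{(i)}})$, while on the left it equals $r!\,\varepsilon(J_{\lambda/\mu})$ because $x_1\cdots x_r$ occurs only in $p_1^r$ among the $p_\rho$ (equivalently only in $P_{(1^r)}$), with coefficient $r!$. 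You never specialize: you compute the scalar product $\langle J_{\lambda/\mu},p_1^r\rangle_\alpha$ in two ways by introducing the lowering operator $D$ adjoint to multiplication by $J^*_{(1)}=\alpha^{-1}p_1$, deriving the one-box branching rule $DJ_\nu=\sum_\rho\varepsilon(J_{\nu/\rho})J_\rho$ from orthogonality and degree counting, and iterating. These are two faces of the same computation --- for homogeneous $f$ of degree $r$ the coefficient of $x_1\cdots x_r$ is precisely $r!\,\varepsilon(f)=\alpha^{-r}\langle f,p_1^r\rangle_\alpha$, and the paper's multi-variable expansion is exactly the iterated coproduct that your $D^r$ implements --- so it is no surprise both yield the factor $r!$, yours via $\langle p_1^r,p_1^r\rangle_\alpha=r!\,\alpha^r$ and the paper's via the coefficient of $x_1\cdots x_r$ in $p_1^r$. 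What your version buys: it is self-contained at the branching step (you re-derive the single-box rule rather than invoking the known skew expansion), it works entirely inside the scalar-product formalism consistently with the paper's own convention $\langle J_{\lambda/\mu},g\rangle_\alpha=\langle J_\lambda,J_\mu^*g\rangle_\alpha$ from \eqref{eq_4.2}, and it makes the provenance of the constants explicit. What it costs: you must assume the vanishing $J_{\nu/\rho}=0$ unless $\rho\subset\nu$ --- which you correctly flag as the crux --- though the paper's quoted expansion presupposes exactly the same fact, so neither argument is weaker on that score; the paper's proof is simply shorter given that expansion is taken as known.
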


{\begin{proof} We have
$$ J_{\lambda/\mu}(x_1,\ldots,x_r)=\sum\prod^r_{i=1}J_{\mu^{(i-1)}/\mu^{(i)}}(x_i)$$
in which the coefficient of $x_1,\ldots, x_r$ is
$$a=\sum\prod^r_{i=1}\varepsilon(J_{\mu^{(i-1)}/\mu^{(i)}})$$
On the other hand, $\varepsilon(J_{\lambda/\mu})$ is the coefficient of $p_1^r$ in $J_{\lambda/\mu}$, and hence $a=r!\varepsilon(J_{\lambda/\mu})$ (since $x_1\cdots x_r$ occurs in $P_\lambda$ only for $\lambda=(1^r)$).
\end{proof}}

From (3) and (4) we have (1$'$). Hence it remains to prove (2). We rewrite (2) in terms of the $P_\lambda$:
\begin{eqnarray*}
D_n(P_\lambda)&=&\sum_{\substack{\mu\subset\lambda\\|\lambda-\mu|=1}}\varepsilon(J_{\lambda/\mu})\frac{P_\lambda(1_n)}{P_\mu(1_n)}P_k \\
&=& \sum_{\mu\subset\lambda}\varepsilon(P_{\lambda/\mu})\frac{J_\lambda(1_n)}{J_\mu(1_n)}P_\mu 
\end{eqnarray*}
Now
$$J_\lambda(1_n)/J_\mu(1_n)=n+a'(s)\alpha-l'(s)$$
where $\{s\}=\lambda-\mu$, so that

\begin{equation}
D_nP_\lambda=\sum_s\varepsilon(P_{\lambda/\mu})(n+a'(s)\alpha-l'(s))P_\mu
\tag{2$'$}
\end{equation}
summed over the corner squares $s\in\lambda$, with $\lambda-\{s\}=\mu$. 

Now $D_n$ is the derivation of  $\Lambda$ defined by
$$D_nP_r=rP_{r-1}(r\geq2), \quad D_nP_1=n.$$
Both sides of (2$'$) are linear in $n$, \& we may replace $n$ by an indeterminate $X$: define a derivation $D_X$ by
$$D_XP_r=rP_{r-1} \mbox{  } (r\geq2), D_XP_1=X.$$
Then we have to prove that

\begin{equation}
D_X P_\lambda=\sum_s\varepsilon(P_{\lambda/\mu})(X+a'(s)\alpha-l'(s))P_\mu.
\tag{2$''$}
\end{equation}
Both sides are linear in $X$, so it is enough (for fixed $\lambda,\alpha$) to prove that they agree for two values of $X$. We shall do this (a) for $X=\infty$ (b) for $X=l(\lambda)=n$ say. 

From the definition of $D_X$ it is clear that

\begin{equation}
\lim_{X\rightarrow\infty}\frac{D_X}{X}=\frac{\partial}{\partial p_1}
\tag{5}
\end{equation}

\begin{lemmaA} We have
\begin{equation}
\frac{\partial}{\partial p_1}P_\lambda=P_{\lambda/(1)}
\tag{6}
\end{equation}
\end{lemmaA}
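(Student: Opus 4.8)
The plan is to recognize the skew Jack polynomial $P_{\lambda/(1)}$ as the image of $P_\lambda$ under the operator adjoint to multiplication by $Q_{(1)}$, and then to identify that operator explicitly with $\partial/\partial p_1$. Recall from the standard theory of Jack polynomials (Macdonald, Ch.~VI) that the dual bases $(P_\lambda),(Q_\lambda)$ of $\Lambda$ satisfy $\langle P_\lambda,Q_\mu\rangle_\alpha=\delta_{\lambda\mu}$, and that the skew functions are characterized by $\langle P_{\lambda/\mu},Q_\nu\rangle_\alpha=\langle P_\lambda,Q_\mu Q_\nu\rangle_\alpha$ for all $\nu$. Equivalently $P_{\lambda/\mu}=Q_\mu^\perp P_\lambda$, where $g^\perp$ denotes the adjoint of multiplication by $g$ with respect to $\langle\cdot,\cdot\rangle_\alpha$. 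Taking $\mu=(1)$ reduces the lemma to computing the single operator $Q_{(1)}^\perp$.

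Next I would compute the two ingredients. Since $\langle P_{(1)},P_{(1)}\rangle_\alpha=\langle p_1,p_1\rangle_\alpha=\alpha$, we have $Q_{(1)}=\alpha^{-1}P_{(1)}=\alpha^{-1}p_1$, so $Q_{(1)}^\perp=\alpha^{-1}p_1^\perp$. It then remains to identify $p_1^\perp$. Working in the power-sum basis, where $\langle p_\rho,p_\sigma\rangle_\alpha=\delta_{\rho\sigma}\,z_\rho\,\alpha^{\ell(\rho)}$, a direct check on basis elements shows that multiplication by $p_1$ is adjoint to $\alpha\,\partial/\partial p_1$, i.e. $p_1^\perp=\alpha\,\partial/\partial p_1$. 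Concretely, for partitions $\rho,\sigma$ one compares $\langle p_1p_\rho,p_\sigma\rangle_\alpha$ with $\langle p_\rho,\alpha\,\partial p_\sigma/\partial p_1\rangle_\alpha$: both vanish unless $\sigma=\rho\cup(1)$, and in that case both equal $(m_1(\rho)+1)\,z_\rho\,\alpha^{\ell(\rho)+1}$, using $z_{\rho\cup(1)}=(m_1(\rho)+1)\,z_\rho$ and $\partial p_\sigma/\partial p_1=m_1(\sigma)\,p_{\sigma\setminus(1)}$.

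Combining the two computations gives $Q_{(1)}^\perp=\alpha^{-1}\cdot\alpha\,\partial/\partial p_1=\partial/\partial p_1$, whence $P_{\lambda/(1)}=Q_{(1)}^\perp P_\lambda=\partial P_\lambda/\partial p_1$, which is $(6)$. The only real obstacle is bookkeeping the normalization of the scalar product: one must check that the factor $\alpha$ coming from $\langle p_1,p_1\rangle_\alpha=\alpha$ cancels exactly against the factor $\alpha^{-1}$ in $Q_{(1)}=\alpha^{-1}p_1$, so that the two powers of $\alpha$ disappear and leave the unweighted derivation $\partial/\partial p_1$. Beyond this cancellation everything is a routine verification on the power-sum basis, and no special properties of the $P_\lambda$ are needed other than the duality of the bases $(P_\lambda)$ and $(Q_\lambda)$.
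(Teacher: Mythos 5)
Your proposal is correct and takes essentially the same route as the paper: Macdonald likewise proves the adjoint identity $\left\langle \frac{\partial f}{\partial p_1}, g\right\rangle_\alpha = \alpha^{-1}\langle f, p_1 g\rangle_\alpha$ (checking it on monomials in the $p_r$), notes $Q_{(1)}=\alpha^{-1}p_1$, and concludes via $\langle \frac{\partial}{\partial p_1}P_\lambda, g\rangle_\alpha = \langle P_\lambda, Q_{(1)}g\rangle_\alpha = \langle P_{\lambda/(1)}, g\rangle_\alpha$. Your explicit power-sum bookkeeping with $z_{\rho\cup(1)}=(m_1(\rho)+1)z_\rho$ simply fills in the step the paper dismisses as obvious.
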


\begin{proof} We have $(f, g \in \Lambda)$
$$\left\langle \frac{\partial f}{\partial p_1}, g\right\rangle_\alpha = \alpha^{-1}\langle f,p_1 g \rangle_\alpha$$
(by linearity it is enough to check this when $f,g$ are monomials in the $p$'s, \& then it is obvious). Since $\alpha^{-1}p_1=Q_{(1)}$ we have
$$ \left\langle \frac{\partial}{\partial p_1}P_{\lambda}, g \right\rangle_\alpha = \langle P_{\lambda}, Q_{(1)}g\rangle_\alpha = \langle P_{\lambda/(1)},g\rangle$$ 
which proves (6).
\end{proof}

Now let $y$ be a single variable; then
$$P_{\lambda}(x;y)=\sum_{\mu}P_{\mu}(x)P_{\lambda/\mu}(y)=\sum_v P_v(y) P_{\lambda/v}(x).$$
By considering the coefficient of $y$ in these two expressions we have
\begin{equation}
P_{\lambda/(1)}(x)=\sum_{\mu} P_{\mu}(x)P_{\lambda/\mu}(1).
\tag{7}
\end{equation}

From (5), (6) and (7) it follows that the coefficient of $X$ in $D_X P_\lambda$ is equal to
$$\sum_{\substack{\mu\subset\lambda\\|\lambda-\mu|=1}} \quad \varepsilon(P_{\lambda/\mu})P_\mu$$
so that (2$''$) is true for $X=\infty$. 

 It remains to prove (2$''$) for $X=n=l(\lambda)$. Let $\lambda_{*} =(\lambda_{1} -1,\ldots , \lambda_{n} -1)$ so that $P_{\lambda}=e_{n} P_{\lambda_{*}}$ and therefore
\begin{equation}
D_nP_\lambda=D_n(e_n P_{\lambda_{*}}) = e_{n-1}P_{\lambda_{*}}+e_n D_n P_{\lambda_{*}}
\tag{8}
\end{equation}

Now
$$ e_{n-1}P_{\lambda_*}=\sum \psi'_{\mu/\lambda_*}P_\mu$$
summed over $\mu\supset\lambda_*$ such that $\mu-\lambda_*$ is a vertical strip of length $(n-1)$, i.e.,  $\mu\subset\lambda, |\lambda-\mu|=1$. Also
$$\psi'_{\mu/\lambda_*}=b_\mu (\bar R_{\mu/\lambda_*})/b_{\lambda_*}(\bar R_{\mu/\lambda_*}) = b_\mu(R)/b_{\lambda_*}(R),$$
where:  $\lambda-\mu$ is the single square $s=(i,\lambda_i)$, and $R=\{(i,j): 1\leq j \leq \lambda_i -1\}$ is the $i$th row of $\mu$ (or $\lambda_*)$. Let $t=(1,\lambda_i)$ be the leftmost square in the row $R$, then $b_{\lambda_*}(R)=b_\lambda (R)/b_{\lambda}(t)$. 

Hence $$ \psi'_{\mu/\lambda_*}=\frac{b_\mu(R)}{b_\lambda(R)}b_\lambda(t).$$
Moreover (2$''$) now takes the form
\begin{equation}
D_X P_\lambda = \sum_\mu \varepsilon(P_{\lambda/\mu})h_\lambda(t)P_\mu
\tag{2$'''$}
\end{equation}
because with $s=(i,\lambda_i)$ we have
$$n+a'(s)\alpha-l'(s)=n+(\lambda_i -1)\alpha-i+1=a(t)\alpha+l(t)+1.$$

We proceed by induction on $\lambda$:  thus we have to show that
\begin{equation}
\varepsilon(P_{\lambda/\mu})h_\lambda(t)=\frac{b_\mu (R)}{b_\lambda (R)}b_\lambda (t) + \varepsilon(P_{\lambda_*/\mu_*})h_{\lambda_*}(t).
\tag{9}
\end{equation}

Now 
 $$ \varepsilon(P_{\lambda/\mu})=P_{\lambda/\mu}(1)=\psi_{\lambda/\mu}=\frac{b_\mu (R)}{b_\lambda (R)}$$
and
$$ \varepsilon(P_{\lambda_*/\mu_*})=\frac{b_\mu (R-\{t\})}{b_\lambda (R-\{t\})}=\frac{b_\mu (R)}{b_\lambda (R)} \cdot \frac{b_\lambda (t)}{b_\mu (t)};$$
thus (9) is equivalent to
$$h_\lambda (t)=b_\lambda(t)+\frac{b_\lambda(t)}{b_\mu(t)}h_\mu(t)$$
(since $h_{\lambda_*}(t)=h_\mu(t)$), that is to say to 
$$h'_\lambda (t)=1+h'_\mu (t)$$
But $h'_\lambda (t)=\alpha\lambda_i +n-i, \; h'_\mu (t)=\alpha(\lambda_i -1) + n-i$ \\
so that in fact
$$h'_\lambda (t) = \alpha + h'_\mu (t)$$
 -- we are still out by a factor $\alpha$.
\newpage

\section*{Laplace transform}
In the case $\alpha=2$ we defined the Laplace transform by
\begin{equation}
(Lf)(t)=\int_{\Sigma^+}e^{-\tr (st)}f(s)\mathrm{d}s \quad \quad \quad \quad (t\in\Sigma^+)
\nonumber
\tag{1}
\end{equation}
and showed that the Laplace transform of $|s|^{a-p}\Omega_\lambda(s)$ is $\Gamma_n(a;\lambda)|t|^{-a}\Omega_\lambda (t^{-1}).$

To write (1) as an integral over $X\,=\,\mathbb{R}^n_+$ we must introduce $$\int_K e^{-\tr (kxk'y)}\mathrm{d}k=e(-x,y)$$
where we have written $e$ in place of $_0F_0$. The Formula (1) then becomes
\begin{equation}
(Lf)(y)=\left(\int^\infty_0\right)^n e(-x,y)f(x)\mathrm{d}\mu(x)
\nonumber
\tag{2}
\end{equation}
where as usual 
$$\mathrm{d}\mu(x)=c'^{-1}_n |\Delta(x)|.$$
So for general $\alpha$ we take $(2)$ as our definition of the Laplace transform.

\begin{unconjecture}[C]
 The Laplace transform of $|x|^{a-p}\Omega_\lambda(x;\alpha) \mbox{ is } \Gamma_n(a;\lambda;\alpha)|y|^{-a}\Omega_\lambda(y^{-1};\alpha)$.
\end{unconjecture}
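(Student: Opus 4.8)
The plan is to verify the identity first at $y=1_n$, then to promote it to general $y$ by an eigenfunction argument that substitutes for the doubling principle \eqref{eq_1.7} used when $\alpha=2$; the unavailability of that principle for general $\alpha$ is precisely what keeps the statement conjectural, and it is where the real difficulty lies. Write the claimed right-hand side as $R_\lambda(y)=\Gamma_n(a;\lambda;\alpha)\,|y|^{-a}\Omega_\lambda(y^{-1})$ and the left-hand side as $\Phi_\lambda(y)=\int e(-x,y)\,|x|^{a-p}\Omega_\lambda(x)\,\mathrm{d}\mu(x)$. At $y=1_n$ we have $e(-x,1_n)={}_0F_0(-x;\alpha)=e^{-\tr x}$ by \eqref{eq_6.6} and \eqref{eq_6.7}, so $\Phi_\lambda(1_n)=\Gamma_n(a;\lambda;\alpha)$ by \eqref{eq_6.18}, while $R_\lambda(1_n)=\Gamma_n(a;\lambda;\alpha)$ because $\Omega_\lambda(1_n)=1$; thus the two sides agree there. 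Moreover, since $e(-x,y)$ depends on $x,y$ only through the products $x_iy_j$, the rescaling $x\mapsto x/c$ shows that $\Phi_\lambda$ is homogeneous of degree $-(na+|\lambda|)$ in $y$, a degree shared by $R_\lambda$ since $|cy|^{-a}\Omega_\lambda((cy)^{-1})=c^{-na-|\lambda|}|y|^{-a}\Omega_\lambda(y^{-1})$. This reduces matters to identifying two symmetric functions of the same homogeneity, but does not by itself fix the dependence on the eigenvalues of $y$.

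To pin down that dependence I would characterize both $\Phi_\lambda$ and $R_\lambda$ as joint eigenfunctions of the commuting Sekiguchi--Debiard family of operators attached to the Jack polynomials. On the algebraic side, $\Omega_\lambda(y^{-1})$ is such an eigenfunction after the inversion $y\mapsto y^{-1}$ and the gauge twist by $|y|^{-a}$; this is a purely formal fact about Jack polynomials under inversion and determinantal twisting, and it evaluates the eigenvalues in terms of $\lambda$ and $a$. On the analytic side, I would apply the corresponding operator in $y$ under the integral defining $\Phi_\lambda$ and transfer it, by integration by parts against the weight $|x|^{a-p}\,|\Delta(x)|^{2k}$, onto the self-dual kernel $e(-x,y)=\sum_\mu(-\alpha)^{|\mu|}\Omega_\mu(x)J_\mu^*(y)$ and thence onto $\Omega_\lambda(x)$, where the eigen-relation for $\Omega_\lambda$ in the $x$-variables reproduces the same eigenvalue. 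One-dimensionality of the relevant eigenspace within functions of the prescribed homogeneity then forces $\Phi_\lambda=c\,R_\lambda$, and the base case gives $c=1$.

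The hard part is entirely analytic and is exactly what the warnings after \eqref{eq_5.5} and \eqref{eq_5.8} anticipate: $\Phi_\lambda(y)$ cannot be evaluated term by term, since integrating the expansion of $e(-x,y)$ against $\Omega_\lambda(x)|x|^{a-p}$ produces the divergent moments $\int\Omega_\mu(x)\Omega_\lambda(x)|x|^{a-p}\,\mathrm{d}\mu(x)$. One must therefore first prove that $\Phi_\lambda(y)$ converges at all, i.e. that $e(-x,y)$ decays as $x\to\infty$ for $y>0$ --- immediate from $\int_K e^{-\tr(kxk'y)}\mathrm{d}k$ when $\alpha=2$, but requiring a direct estimate for general $\alpha$ --- and then justify the integration by parts with vanishing boundary contributions at $0$ and at $\infty$. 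An alternative route would derive the general-$y$ case from the functional equation \eqref{eq_6.29}, which is itself the general-$\alpha$ surrogate for doubling and rests on the same conjectural inputs \eqref{eq_6.24} and \eqref{eq_6.25}. In either approach the genuinely new content beyond the $y=1_n$ identity \eqref{eq_6.18} is the replacement of compact-group averaging by an operator-theoretic or functional-equation argument, and that is the step I expect to be the principal obstacle.
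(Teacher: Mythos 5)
Your outline reproduces, in essence, the paper's own conditional argument for \eqref{conjC}: Macdonald likewise fixes the constant at $y=1_n$ via \eqref{eq_6.18}, applies the operator $Ef=\Delta(x)^{-2k}\sum_{i=1}^n x_iD_i(\Delta(x)^{2k}x_iD_if)$ in the $y$-variables under the integral (using $E\Omega_\lambda=\langle\lambda,\lambda+2k\delta\rangle\Omega_\lambda$ and the symmetry of the kernel $e(-x,y)$ so that $E_y$ can be traded for $E_x$), transfers it by two integrations by parts --- with exactly the caveat you raise about decay of $e(-x,y)$ and boundary terms --- onto an operator $E'$ acting on $|x|^{a-p}\Omega_\lambda(x)$, and matches eigenvalues: $E'\bigl(|x|^{a-p}\Omega_\lambda\bigr)=(\langle\mu,\mu+2k\delta\rangle+2|\mu|+np)\,|x|^{a-p}\Omega_\lambda$ with $\mu_i=a-p+\lambda_i$, which he identifies with $\langle\nu,\nu+2k\delta\rangle$ for $\nu_i=-a-\lambda_{n+1-i}$, the eigenvalue belonging to the target $|y|^{-a}\Omega_\lambda(y^{-1};\alpha)$. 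Up to your invoking the whole Sekiguchi--Debiard family where the paper uses the single operator $E$, this is the same route, and your recognition that the statement is conjectural because the doubling principle is missing for general $\alpha$ is accurate.

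The genuine gap is your uniqueness step. You assert ``one-dimensionality of the relevant eigenspace within functions of the prescribed homogeneity,'' but homogeneity cannot cut the eigenspace down to a line: the eigenvalue system attached to $\nu$ admits, on the open chamber, a generically $n!$-dimensional space of (joint) eigenfunctions --- the asymptotically free solutions with leading exponents the permutations $w\nu$ --- and every one of them has the same total degree $|\nu|=-(na+|\lambda|)$, hence the same homogeneity as $\Phi_\lambda$. So homogeneity plus the eigen-equations still leaves an $n!$-parameter family, and one needs an a priori structural fact about $\Phi_\lambda$ to select the right member. The paper is explicit about what that input is: the auxiliary conjecture (C$'$), which asserts that the transform is of the form $|y|^{-a}$ times a \emph{polynomial} in $y^{-1}$ with leading term $y^\lambda$; granted this, triangularity of $E$ on such twisted polynomials with generically distinct eigenvalues forces proportionality to $|y|^{-a}\Omega_\lambda(y^{-1};\alpha)$, and $y=1_n$ gives the constant. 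Your sketch silently assumes the content of (C$'$) inside the uniqueness claim; without it you would instead have to prove that the smooth symmetric eigenfunction regular across the walls $y_i=y_j$ and with the correct behavior at infinity is unique --- a genuine analytic theorem, not a formality. (This is consistent with the status of the statement: the paper proves \eqref{conjC} outright only for $\alpha=\infty$, takes it as known for $\alpha=2,1,\frac{1}{2}$, and otherwise argues conditionally on (C$'$) and on the legitimacy of the integration by parts.)
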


Assuming this we have
$$
\left(\int^\infty_0\right)^n e(-x,y^{-1})\mbox{}_pF_q(\underbar{$a$};\underbar{$b$};x)|x|^{a-p}\mathrm{d}\mu(x)=|y|^a\Gamma_n(a;\alpha)\mbox{ }_{p+1}F_q(\underbar{$a$}^+;b;y)$$
with the same notation as in \S 3.\footnote{See Additional observation1 at the end of this section.}

Conjecture (C) is true for $\alpha = 2,1,\frac{1}{2}$, surely; also for $\alpha=\infty$:--

\begin{unprop} We have

\begin{equation}
e(x,y;\infty)=\frac{1}{n!}\sum_{\omega\in S_n} e^{\langle x,\omega y\rangle}
\nonumber
\end{equation}
where $\langle x,y\rangle = \sum_{i=1}^n x_i y_i$        and $\omega y = (y_{\omega(1)},\ldots ,y_{\omega(n)}).$
\end{unprop}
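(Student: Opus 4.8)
The plan is to evaluate the left-hand side directly from its Jack-polynomial series, pass to the limit $\alpha\to\infty$, and then match the outcome against the symmetrised exponential on the right. Starting from \eqref{eq_6.5} with empty parameter strings we have $e(x,y;\alpha)={}_0F_0(x,y;\alpha)=\sum_\lambda \alpha^{|\lambda|}\,J_\lambda^*(x)J_\lambda^*(y)/J_\lambda^*(1_n)$. Substituting the relation $J_\lambda^*=P_\lambda/h'_\lambda(\alpha)$ recorded in the Duality section, the two factors of $h'_\lambda$ in the numerator and the one in the denominator collapse to a single $h'_\lambda$, giving the symmetric closed form
$$
e(x,y;\alpha)=\sum_\lambda \frac{\alpha^{|\lambda|}}{h'_\lambda(\alpha)}\,\frac{P_\lambda(x;\alpha)\,P_\lambda(y;\alpha)}{P_\lambda(1_n;\alpha)}.
$$
This is the expression whose $\alpha\to\infty$ behaviour I would analyse factor by factor.

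Next I would read off the limits. It is standard that $P_\lambda(x;\alpha)\to m_\lambda(x)$ coefficientwise as $\alpha\to\infty$, where $m_\lambda$ is the monomial symmetric function; hence $P_\lambda(x;\alpha)\to m_\lambda(x)$ and $P_\lambda(1_n;\alpha)\to m_\lambda(1_n)$. For the prefactor, $h'_\lambda(\alpha)=\prod_{s\in\lambda}\bigl(\alpha(a(s)+1)+l(s)\bigr)$ has leading term $\alpha^{|\lambda|}\prod_{s\in\lambda}(a(s)+1)=\alpha^{|\lambda|}\prod_i \lambda_i!$, since $a(s)+1$ runs over $\lambda_i,\lambda_i-1,\dots,1$ along the $i$th row. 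Thus $\alpha^{|\lambda|}/h'_\lambda(\alpha)\to (\prod_i\lambda_i!)^{-1}$, and taking the limit term by term yields
$$
e(x,y;\infty)=\sum_\lambda \frac{1}{\prod_i\lambda_i!}\,\frac{m_\lambda(x)\,m_\lambda(y)}{m_\lambda(1_n)}.
$$

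Finally I would expand the target and compare coefficients. Writing $e^{\langle x,\omega y\rangle}=\prod_i\sum_{k_i\ge 0}(x_iy_{\omega(i)})^{k_i}/k_i!$ and summing over $\omega$, the coefficient of a monomial $x^a y^b$ is nonzero only when $b$ is a rearrangement of $a$; for such $a,b$ the number of $\omega$ realising $b_{\omega(i)}=a_i$ is the stabiliser order $\prod_v m_v(a)!$, where $m_v(a)$ is the number of coordinates of $a$ equal to $v$ (including $v=0$), and the weight carried is $1/\prod_i a_i!$. Hence the coefficient of $x^a y^b$ in $\tfrac1{n!}\sum_{\omega} e^{\langle x,\omega y\rangle}$ is $\prod_v m_v(a)!/(n!\prod_i a_i!)$. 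In the series just obtained only $\lambda=\operatorname{sort}(a)$ contributes, with coefficient $1/\bigl((\prod_i\lambda_i!)\,m_\lambda(1_n)\bigr)$; since $\prod_i\lambda_i!=\prod_i a_i!$ and $m_\lambda(1_n)=n!/\prod_v m_v(\lambda)!$, this equals the same $\prod_v m_v(a)!/(n!\prod_i a_i!)$. The two series therefore agree coefficientwise, which is the assertion.

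The only genuinely delicate point is the interchange of the limit $\alpha\to\infty$ with the infinite summation. I would handle this either by dominated convergence on compact sets in $(x,y)$ (using that each $e(\cdot,\cdot;\alpha)$ is an entire kernel with uniformly controlled tails), or, more cleanly, by simply taking the displayed $m_\lambda$-series as the definition of $e(x,y;\infty)$ — in which case no analytic interchange is needed and the statement reduces entirely to the coefficient bookkeeping of the last paragraph. That combinatorial matching is routine once the stabiliser count and the value $m_\lambda(1_n)=n!/\prod_v m_v(\lambda)!$ are in hand.
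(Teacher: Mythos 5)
Your proof is correct and follows essentially the same route as the paper: both pass to the limit via $P_\lambda(x;\alpha)\to m_\lambda(x)$ and $\alpha^{|\lambda|}/h'_\lambda(\alpha)=\prod_{s\in\lambda}\bigl(a(s)+1+\alpha^{-1}l(s)\bigr)^{-1}\to \bigl(\prod_i\lambda_i!\bigr)^{-1}$, arriving at the series $\sum_\lambda m_\lambda(x)\,m_\lambda(y)\big/\bigl(\lambda!\,m_\lambda(1_n)\bigr)$. The only cosmetic difference is in the final bookkeeping --- the paper resums directly by reindexing over compositions in $\mathbb{N}^n$ and interchanging the two sums, while you compare coefficients of $x^a y^b$ using the stabiliser count $\prod_v m_v(a)!$ and $m_\lambda(1_n)=n!/\prod_v m_v(\lambda)!$, which is the same computation.
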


\begin{proof} We have $P_\lambda (x;\infty)=m_\lambda (x),$ and
\begin{eqnarray*}
\alpha^{|\lambda|}J^*_\lambda (x;\alpha) &=& \frac{\alpha^{|\lambda|}}{h'_\lambda (\alpha)}P_\lambda (x) \\
&=& P_\lambda (x) \prod_{s\in \lambda} (a(s)+1+\alpha^{-1} l(s))^{-1}
\end{eqnarray*}
which is equal to $m_\lambda (x)/|\lambda|!$ when $\alpha = \infty$.

Hence
\begin{eqnarray*}
e(x,y;\infty) &=& \sum_{l(\lambda)\leq n}\quad \frac{m_\lambda (x)}{m_\lambda (1_n)}\cdot \frac{m_\lambda (y)}{\lambda!} \\
&=& \frac{1}{n!} \sum_{\alpha \in \mathbb{N}^n}\frac{y^d}{\alpha!}\sum_{\omega \in S_n} x^{\omega\alpha} \\
&=& \frac{1}{n!} \sum_{\omega \in S_n} \sum_{\alpha \in \mathbb{N}^n} \frac{x^{\omega\alpha}y^\alpha}{\alpha!} \\
&=& \frac{1}{n!} \sum_{\omega \in S_n} e^{\langle \omega x,y\rangle}.
\end{eqnarray*}
\end{proof}

Since $k=0$ we have $\mathrm{d}\mu(x) = \mathrm{d}x$ and therefore (as $p=1$)
\begin{eqnarray*}
\left(\int^\infty_0 \right)^n e(-x,y)|x|^{a-1}x^\beta \mathrm{d}\mu(x) &=& \frac{1}{n!}\sum_{\omega\in S_n}\prod^n_{i=1}\int^\infty_0 e^{-x_i y_{\omega(i)}} x_i^{a+\beta_i}\frac{\mathrm{d}x_i}{x_i} \\
&=& \frac{1}{n!}\sum_{\omega\in S_n}\prod^n_{i=1} y^{-(a+\beta_i)}_{\omega(i)}\Gamma(a+\beta_i) \\
&=& \Gamma_n(a;\beta)|y|^{-a}  \frac{1}{n!}\sum_{\omega\in S_n} y^{-\omega\alpha}.
\end{eqnarray*}

Hence the Laplace transform of $|x|^a m_\lambda (x)$ is
$$ \Gamma_n (a;\lambda) |y|^{-a}m_\lambda (y^{-1})$$
as required for Conjecture (C).

\underline{Remark} The series $e(x,y)$ converges absolutely for all $x,y$. For if $|x_i|\leq X$ for $1\leq i\leq n$, then we have $$|P_\lambda(x;\alpha)|\leq X^{|\lambda|}P_\lambda(1_n;\alpha)$$
because the coefficients of the monomials in $P_\lambda(x;\alpha)$ are all positive (we are assuming $\alpha \geq 0$). Hence $e(x,y;\alpha)$ is dominated (for $x \in [-X, X]^n$) by
$$\sum_\lambda X^{|\lambda|}\alpha^{|\lambda|} J_\lambda^* (y;\alpha) = e^{X\, {\rm trace}\,y}.$$
We should go on and show that (for $x \geq 0$) $e(-x,y)$ decreases exponentially for fixed $y$, so that we can confidently integrate against it.

From (C) it would follow, in particular, that
$$\left(\int^\infty_0\right)^n e(-x,y) e(x,z) |x|^{a-p}\mathrm{d}\mu(x) = |y|^{-a}\Gamma_n (a;\alpha)\mbox{ }_1 F_0 (a;y^{-1}, z)$$
\bigskip

As to Conjecture (C), assume the weaker 

\begin{unconjecture}[C$'$]  $\left( \int^\infty_0 \right)^n e(-x,y)|x|^{a-p}x^\lambda \mathrm{d}\mu(x)$ is of the form $|y|^{-a} \times$ polynomial in $y^{-1}$ with leading term $y^\lambda$.
\end{unconjecture}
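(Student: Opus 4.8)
The plan is to treat the integral $I_\lambda(y)=\int e(-x,y)\,|x|^{a-p}x^\lambda\,\mathrm d\mu(x)$ by first making the integrand symmetric and then transporting the monomial $x^\lambda$ from the $x$-variable into a differential operator in $y$. Since $e(-x,y)$, $|x|^{a-p}$ and $\mathrm d\mu(x)$ are all $S_n$-invariant in $x$, I would replace $x^\lambda$ by its symmetrization, so that up to a positive combinatorial constant it suffices to evaluate $I_\lambda^{\mathrm{sym}}(y)=\int e(-x,y)|x|^{a-p}m_\lambda(x)\,\mathrm d\mu(x)$. A scalar substitution $x=cu$ (using $e(-cu,y)=e(-u,cy)$ and $p=k(n-1)+1$) gives $I_\lambda^{\mathrm{sym}}(cy)=c^{-na-|\lambda|}I_\lambda^{\mathrm{sym}}(y)$, so $I_\lambda^{\mathrm{sym}}$ is homogeneous of degree $-(na+|\lambda|)$ — exactly the degree of the target $|y|^{-a}\times(\text{degree }|\lambda|\text{ polynomial in }y^{-1})$. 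Hence once polynomiality in $y^{-1}$ is known, homogeneity forces the polynomial to have pure degree $|\lambda|$, matching the $\alpha=\infty$ evaluation just carried out, where it is $m_\lambda(y^{-1})$.

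The device for moving $m_\lambda(x)$ across the kernel is the family of Dunkl operators $T_1,\dots,T_n$ in the variable $y$. For a symmetric polynomial $q$ the invariant combination $q(T^{(y)})$ restricts to a genuine differential operator on symmetric functions — these are the commuting operators already implicit in this manuscript, $p_1(T)=D_n=\sum_i\partial/\partial y_i$ and, essentially, the operator $\Box_1$ of \S8 for $p_2(T)$ — and $e(-x,y)$ is their joint eigenfunction: $q(T^{(y)})\,e(-x,y)=q(-x)\,e(-x,y)$. Taking $q=(-1)^{|\lambda|}m_\lambda$ and moving the operator outside the integral (legitimate for $y>0$ and $\mathrm{Re}(a)$ large, granted the exponential decay of $e(-x,y)$ in $x$ that still must be proved) yields $I_\lambda^{\mathrm{sym}}(y)=(-1)^{|\lambda|}\,m_\lambda(T^{(y)})\,I_0(y)$, where $I_0(y)=\int e(-x,y)|x|^{a-p}\,\mathrm d\mu(x)$. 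This reduces the whole statement to the base case $I_0$ together with the action of $m_\lambda(T^{(y)})$.

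The polynomial form then follows from a stability lemma that I would prove directly: the space $|y|^{-a}\,\mathbb Q[y_1^{-1},\dots,y_n^{-1}]$ is stable under every $T_i$. Indeed $\partial_{y_i}$ preserves it, and because $|y|^{-a}$ is symmetric the reflection part acts as $|y|^{-a}\,\tfrac{g-s_{ij}g}{y_i-y_j}$, where the one-line identity $\tfrac{y_i^{-m}-y_j^{-m}}{y_i-y_j}=-\sum_{l=0}^{m-1}y_i^{l-m}y_j^{-1-l}\in\mathbb Q[y^{-1}]$ shows the divided difference again lands in $\mathbb Q[y^{-1}]$. Thus $m_\lambda(T^{(y)})I_0$ lies in $|y|^{-a}\mathbb Q[y^{-1}]$, and the dominance-leading monomial is $y^{-\lambda}$ since the top part of $m_\lambda(T)$ is $m_\lambda(\partial_y)$, whose action on $|y|^{-a}$ reproduces the $\alpha=\infty$ leading term, the reflection corrections contributing only monomials $y^{-\mu}$ with $\mu<\lambda$.

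The genuinely hard input, and the step I expect to be the main obstacle, is the base case $\lambda=0$, namely $I_0(y)=\Gamma_n(a;\alpha)|y|^{-a}$, which is the $\lambda=0$ instance of \eqref{conjC} itself. At $y=1_n$ it is precisely the Selberg-type evaluation \eqref{eq_6.17}, since $e(-x,1_n)=e^{-\tr x}$; the difficulty is entirely in propagating from $y=1_n$ to general $y$. For $\alpha\in\{\tfrac12,1,2\}$ one may write $e(-x,y)=\int_K e^{-\tr(kxk'y)}\,\mathrm dk$ and invoke \eqref{eq_6.17}, but for general $\alpha$ there is no such group integral, and one cannot expand $e(-x,y)$ term by term because each resulting $\int \Omega_\mu(x)|x|^{a-p}\,\mathrm d\mu(x)$ diverges — the decay survives only in the oscillatory sum. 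I would attack this either by producing a first-order holonomic system in $y$ satisfied by both $I_0$ and $|y|^{-a}$ and appealing to uniqueness, or by the Macdonald-style route of showing the coefficients are rational of bounded degree in $k=\alpha^{-1}$ and pinning them down at the known values $\alpha\in\{\tfrac12,1,2,\infty\}$; securing the requisite degree bound is the crux.
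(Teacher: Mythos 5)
There is no paper proof to compare against here: in the manuscript, (C$'$) is itself an unproved conjecture, introduced only as a weaker hypothesis from which Conjecture \eqref{conjC} is then derived via the eigenfunction argument for the operator $E_y$; the only instance actually established in the text is $\alpha=\infty$, where $e(x,y;\infty)=\frac{1}{n!}\sum_{\omega\in S_n}e^{\langle x,\omega y\rangle}$ reduces everything to one-variable Gamma integrals (with $\alpha=2,1,\frac12$ asserted via the group-integral representation of the kernel). Your proposal therefore attempts something the paper does not do, and its reductive content is genuine: the symmetrization and the homogeneity count are correct (with $p=k(n-1)+1$ the integral is homogeneous of degree $-(na+|\lambda|)$); the stability lemma is correct as stated, since the identity $\frac{y_i^{-m}-y_j^{-m}}{y_i-y_j}=-\sum_{l=0}^{m-1}y_i^{l-m}y_j^{-1-l}$ does show that each Dunkl operator $T_i$ preserves $|y|^{-a}\,\mathbb{Q}[y_1^{-1},\dots,y_n^{-1}]$; and, granting the kernel property $q(T^{(y)})e(-x,y)=q(-x)e(-x,y)$ for symmetric $q$, you correctly reduce (C$'$) for all $\lambda$ to the single case $\lambda=0$, with the leading-monomial bookkeeping at least plausible (the coefficient $\prod_i(a)_{\lambda_i}$ of $y^{-\lambda}$ is nonzero for generic $a$).

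The gaps, which you partly flag yourself, are nevertheless real. First and foremost, the base case $\int e(-x,y)|x|^{a-p}\,\mathrm{d}\mu(x)=\mathrm{const}\cdot|y|^{-a}$ is exactly the $\lambda=0$ instance of \eqref{conjC}, equivalently of (C$'$) itself; symmetry plus homogeneity do not force it --- $p_1(y)^{-na}$ has the same invariance and the same degree --- so your argument reduces the conjecture to itself at $\lambda=0$, and the two attacks you sketch (a holonomic system in $y$, or rationality in $k$ interpolated through $\alpha\in\{\frac12,1,2,\infty\}$) are programs rather than proofs. Nor can the paper supply this step: its $E_y$-computation pins the transform down only modulo the polynomial-form hypothesis, i.e.\ modulo (C$'$), so combining the two arguments does not close the circle. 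Second, the joint-eigenfunction property of $e(-x,y)$ under all symmetric polynomials in the $T_i$ is itself a substantial theorem not available in this manuscript (and anachronistic to it): the text establishes only the first-order identity $D_x\,e(x,y)=p_1(y)e(x,y)$, and even the adjacent first-order statement \eqref{eq_6.15} underlying Lemma~1(ii) of the differential-equations section is left incomplete in the appendix, whose computation ends ``out by a factor $\alpha$.'' Finally, pulling $m_\lambda(T^{(y)})$ outside the integral requires the exponential decay of $e(-x,y)$ for $x\geq 0$, which the paper explicitly lists as unproved. In sum: a correct and useful skeleton that genuinely reduces (C$'$) to its $\lambda=0$ case, but the statement remains unproved --- consistent with its status in the paper as a conjecture.
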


Let $E$ be the operator
$$ Ef = \Delta(x)^{-2k}\sum^n_{i=1}x_i D_i (\Delta(x)^{2k}x_i D_i f).$$
Then (cf.\ ch.\ VI, \S10)
$$E\Omega_\lambda (x;\alpha) = \langle\lambda, \lambda +2k\delta \rangle\Omega_\lambda (x;\alpha)$$
where as usual $\delta = (n-1, n-2,\ldots,1,0).$ 

$\big[$From loc.\ cit., the eigenvalue is
\begin{eqnarray*}
& & 2\alpha^{-1}\left(\alpha n(\lambda')-n(\lambda)+\left((n-1)+\tfrac{1}{2}\alpha\right)|\lambda|\right) \\
&=& 2n(\lambda')-2kn(\lambda)+2k(n-1)|\lambda|+|\lambda| \\
&=& \Sigma\lambda_i (\lambda_i -1-2k(i-1)+2k(n-1)+1) \\
&=& \Sigma\lambda_i(\lambda_i+2k(n-i)).\big]
\end{eqnarray*}

Consider
\begin{eqnarray*}
E_y \int e(-x,y)|x|^{a-p}\Omega_\lambda(x;\alpha)\mathrm{d}\mu(x) &=& \int E_x(e(-x,y))|x|^{a-p}\Omega_\lambda(x;\alpha)\mathrm{d}\mu(x) \\
&=& \sum^n_{i=1} \int x_i D_i (\Delta(x)^{2k} x_i D_i e(-x,y)) |x|^{a-p}\Omega_\lambda (x)\mathrm{d}x.
\end{eqnarray*}

If integration by parts is o.k.\ in these circumstances -- we need to have $D_i(e(-x,y))$ vanishing more strongly than any polynomial in $x_i$ as $x_i\rightarrow \infty$ -- we can replace this by\footnote{See Additional observation 2 at the end of this section.}
$$
 - \sum^n_{i=1} \int x_iD_i(e(-x,y)) D_i (x_i |x|^{a-p}\Omega_\lambda(x)) \Delta(x)^{2k}\mathrm{d}x
$$

and then, integrating by parts again, we obtain
$$
\sum^n_{i=1} \int e(-x,y) D_i(x_i\Delta(x)^{2k}D_i(x_i|x|^{a-p}\Omega_\lambda(x)))\mathrm{d}x
$$
or, if we introduce the operator $E'$ defined by
$$E'f=\Delta(x)^{-2k}\sum^n_{i=1}D_i(x_i\Delta(x)^{2k}D_i(x_i f)),$$
we have
$$E_y \int e(-x,y)|x|^{a-p} \Omega_\lambda (x;\alpha)\mathrm{d}\mu(x) = \int e(-x,y) E'(|x|^{a-p}\Omega_\lambda(x))\mathrm{d}\mu(x).$$

Now
\begin{eqnarray*}
Ef &=& \sum^n_{i=1} x_i(2kD_i(\log \Delta)x_i D_i f+D_i f+x_i D_i^2 f) \\
&=& \sum^n_{i=1}(x_i^2 D_i^2 f+x_i D_i f)+2k\sum_{i\neq j}\frac{x_i^2}{x_i-x_j}D_i f
\end{eqnarray*}
and
\begin{eqnarray*}
E'f &=& \sum^n_{i=1}(D_i(x_i f)+2kx_i (\log \Delta)D_i(x_i f) +x_iD^2_i (x_i f)) \\
&=& nf + \sum^n_{i=1} (x^2_i D^2_i f + 3x_i D_i f) +2k\sum_{i\neq j}\frac{x_i}{x_i-x_j}(f+x_iD_i f) \\
&=& (np+2\sum^n_{i=1}x_iD_i)f+Ef \quad\quad\quad \left(\mbox{since }\sum_{i\neq j}\frac{x_i}{x_i-x_j} = \frac{1}{2}n(n-1)\right)
\end{eqnarray*}
so that
$$E'(|x|^{a-p}\Omega_\lambda (x))=(\langle\mu, \mu+2k\delta\rangle +2|\mu|+np)|x|^{a-p}\Omega_\lambda(x),$$
where $\mu_i = a-p+\lambda_i$. From this it follows that the Laplace transform 
$$\int e(-x,y)|x|^{a-p}\Omega_\lambda(x;\alpha)\mathrm{d}\mu(x)$$
is an eigenfunction of $E_y$ with eigenvalue
$$\langle\mu,\mu+2k\delta\rangle+2|\mu|+np.$$
We want to express this in terms of $\nu$, where
$$\nu_i=-a-\lambda_{n+1-i}=-p-\mu_{n+1-i}.$$
So we have
$$\mu_i=-p-\nu_{n+1-i}$$
and
\begin{eqnarray*}
\langle \nu, \nu+2k\delta\rangle &=& \sum^n_{i=1}\nu_{n+1-i} (\nu_{n+1-i}+2k(i-1)) \\
&=& \sum^n_{i=1}(\mu_i+p)(\mu_i+p-2k(i-1)).
\end{eqnarray*}
Now $p=(n-1)k+1$, so that
$$ 2p-2k(i-1)=2k(n-i)+2$$
and hence
\begin{eqnarray*}
\langle \nu, \nu+2k\delta\rangle &=& \sum^n_{i=1}(\mu^2_i +2\mu_i(k(n-i)+1))+p\sum^n_{i=1}(p-2k(i-1)) \\
&=& \langle\mu, \mu +2k\delta\rangle+2|\mu|+np
\end{eqnarray*}
(since $p-2k(i-1) = k(n+1-2i)+1$).

So finally we have
$$E_y \int e(-x,y)|x|^{a-p}\Omega_\lambda(x;\alpha)\mathrm{d}\mu(x) = \langle \nu, \nu +2k\delta\rangle \int e(-x,y)|x|^{a-p}\Omega_\lambda(x;\alpha)\mathrm{d}\mu(x)$$
which in view of (C$'$) shows that the integral must be a scalar multiple of 
$$|y|^{-a}\Omega_\lambda(y^{-1};\alpha).$$
Finally, by setting $y=1$ we get the scalar constant.
\bigskip

\subsection*{Additional observation 1}

\eqref{conjC} means that for any $F(x,y)$ we have 
$$\int e^{-\mathrm{tr}(x)}F(x,y)|x|^{a-p}\mathrm{d}\mu(x) = |y|^{-a}\int e(-x,y)F(x,1)|x|^{a-p}\mathrm{d}\mu(x)$$
---i.e., the two versions of Laplace transform agree.
\bigskip

\subsection*{Additional observation 2}

When $\alpha = \infty$ (i.e., $k=0$) we have
$$\alpha^{|\lambda|}J_{\lambda}^*(x;\alpha) = \frac{m_{\lambda}(x)}{\lambda!}$$
$$\Gamma_n (a+p;\lambda;\alpha) = (a+\lambda)!$$
and therefore
\begin{eqnarray*}
A_a(x;\infty) &=& \sum_{\lambda}\frac{(-1)^{|\lambda|}m_{\lambda}(x)}{\lambda!(a+\lambda)!} \\
&=& \sum_{\alpha\in\mathbb{N}^n}\frac{(-1)^{|\alpha|}x^{\alpha}}{\alpha!(a+\alpha)!} \\
&=& \prod_{i=1}^n\mbox{}_0F_1(a;-x_i)
\end{eqnarray*}
and likewise
$$A_a(x;y;\infty) = \frac{1}{n!}\sum_{\omega\in S_n}\prod_{i=1}^n\mbox{}_0F_1(a;-x_iy_{\omega (i)}).$$
\newpage

\section*{Fourier transform} 
Let $F=\mathbb{R}, \mathbb{C}$ or $\mathbb{H}$ and let
\begin{eqnarray*}
k &=& \tfrac{1}{2}(F:\mathbb{R})\mbox{  }(=\tfrac{1}{2}, 1 \mbox{ or } 2) \\
p &=& k(n-1)+1 \mbox{  } (=\tfrac{1}{2}(n+1),n \mbox{ or } 2n-1).
\end{eqnarray*}
Let $\Sigma = \Sigma_n(F)$ denote the space of $n\times n$ hermitian matrices with entries in $F$, so that
$${\rm dim}_{\mathbb{R}}\Sigma=n+kn(n-1)=np.$$
If $s=(s_{ij})\in\Sigma$ we have $s_{ii} \in \mathbb{R}$ and $s_{ji} = \bar s_{ij}\mbox{  }(i<j).$ \\
We write $$ s_{ij}=\sum_{\alpha}s_{ij\alpha}e_{\alpha}\quad\quad(i<j)$$
where $(e_\alpha)_{0\leq\alpha\leq2k-1}$ is the standard basis of $F$ over $\mathbb{R}$ (with $e_o=1$). We take the functions $s_{ii}(1\leq i\leq n)$,  $s_{ij\alpha}(i<j, 0\leq\alpha\leq2k-1)$ as coordinate functions on $\Sigma$ and define
$$\mathrm{d}s=\pi^{-kn(n-1)/2}\left(\prod^n_{i=1}\mathrm{d}s_{ii}\right)\left(\prod_{i<j}\prod_\alpha \mathrm{d}s_{ij\alpha}\right)$$
If $x\in M_n(\mathbb{H})$ we define
$${\rm trace}(x)={\rm Re}\left(\sum^n_{i=1}x_{ii}\right)$$
(so the trace is a real number; we have ${\rm trace}(x)=\frac{1}{2}{\rm trace}\, \omega (x)$ where $\omega: M_n(\mathbb{H})\rightarrow M_{2n}(\mathbb{C})$ is the standard embedding, induced by $\omega(a+bi+cj+dk)=\left(\begin{matrix}a+bi&c+di\\-c+di&a-bi\end{matrix}\right)$.)

Then if $s,t\in\Sigma$ we have
\begin{eqnarray*}
{\rm trace}(st) &=& {\rm Re}\sum^n_{i,j=1} s_{ij}t_{ji} = {\rm Re}\sum_{i,j}s_{ij}\bar t_{ij} \\
&=& \sum^n_{i=1}s_{ii}t_{ii}+\sum_{i<j}{\rm Re}(s_{ij}\bar t_{ij} + \bar s_{ij}t_{ij}) \\
&=& \sum^n_{i=1}s_{ii}t_{ii}+2\sum_{i<j}\sum_\alpha s_{ij\alpha}t_{ij\alpha}.
\end{eqnarray*}

The \underline{Fourier transform} of a function $f$ on $\Sigma$ is
$$\hat f(t)=\int_{\Sigma}e^{-i \,\tr (st)}f(s)\mathrm{d}s$$
and we shall have 
$$\hat{\hat{f}}(s)=c_nf(-s)$$
for some constant $c_n$. To evaluate the constant we may take $f(s)=e^{-\frac{1}{2}\tr (s^2)}$, so that
\begin{eqnarray*}
\hat f(t) &=& \int_\Sigma {\rm exp}-\frac{1}{2}\tr (s^2 + 2ist)\mathrm{d}s \\
&=& e^{-\frac{1}{2}\tr t^2}\int_\Sigma {\rm exp}-\frac{1}{2} \tr (s+it)^2 \mathrm{d}s
\end{eqnarray*}
Now (by Cauchy's theorem) this integral is equal to
$$\int_\Sigma {\rm exp}-\frac{1}{2}\tr (s^2)\mathrm{d}s = \left(\prod^n_{i=1}\int_{\mathbb{R}}e^{-s_{ii}^2/2}\mathrm{d}s_{ii}\right)\cdot\left(\prod_{i<j}\prod_\alpha\int_{\mathbb{R}}e^{-s^2_{ij\alpha}}\mathrm{d}s_{ij\alpha}\right)\pi^{-kn(n-1)/2}$$
Since $$\int_{\mathbb{R}}e^{-x^2}\mathrm{d}x = \pi^{1/2},\quad \int_{\mathbb{R}}e^{-x^2/2}\mathrm{d}x = (2\pi)^{1/2}$$
it follows that 
$$ \hat f (t) = (2\pi)^{n/2}f(t)$$
and therefore $\hat{\hat f}(t) = (2\pi)^n f(t)$, i.e., $c_n=(2\pi)^n$:
$$\boxed{\hat{\hat f}(s) = (2\pi)^n f(-s).}$$

Now let $$K=U_n(F)=\{k\in M_n(\mathbb{F}):\;  k\bar k' =1\}$$
and suppose that $f$ is $K$-invariant: $f(ksk^{-1}) = f(s)$.

Since $d(ksk^{-1})=ds$, we have
\begin{eqnarray*}
\hat f(t) &=& \int_\Sigma e^{-i\,\tr (ksk^{-1}t)}f(s)\mathrm{d}s \\
&=& \int_\Sigma e(-is,t)f(s)\mathrm{d}s,
\end{eqnarray*}
where $$e(s,t)=\int_K e^{\tr (ksk^{-1}t)}\mathrm{d}k$$
($\mathrm{d}k$ being normalized Haar measure on the compact group $K$).

We can write this in the form $$\hat f(y)=\int_{\mathbb{R}^n}e(-ix,y)f(x)\mathrm{d}\sigma(x).$$
where $$\mathrm{d}\sigma(x)=c'_n|\Delta(x)|^{2k}\mathrm{d}x$$
$(\Delta(x)=\prod_{i<j}(x_i-x_j) ;\;  \mathrm{d}x=\mathrm{d}x_1\cdots \mathrm{d}x_n)$ and $c_n$ is some constant. So for $k=2,1,\frac{1}{2}$ we have the \underline{Fourier transform} (of suitable functions $f$)
\begin{equation}
\hat f(y) = \int_{\mathbb{R}^n} e(-ix, y) f(x)\mathrm{d}\sigma(x)
\nonumber
\tag{F1}
\end{equation}
and \underline{Fourier reciprocity}
\begin{equation}
\hat{\hat f}(x) = (2\pi)^n f(-x).
\nonumber
\tag{F2}
\end{equation}
Now the \underline{exponential kernel} $e(x,y)$ is defined for all $k$, namely if $C_\lambda(x)=C_\lambda(x,k)$ is the multiple of Jack's symmetric function $J_\lambda(x;k^{-1})$ such that 
$$\sum_{|\lambda|=m} C_\lambda(x)=p_1(x)^m$$
then
$$e(x,y)=\sum_\lambda\frac{C_\lambda(x)C_\lambda(y)}{C_\lambda(1_n)|\lambda|!}.$$
So in particular
$$e(x,1_n)=\sum_{m\geq 0}\frac{p_1(x)^m}{m!}=e^{\sum x_i}$$
We may therefore define the Fourier transform (F1) for all $k$. When $k=2,1$ or $\frac{1}{2}$ the reciprocity formula (F2) will be true, for the reasons given above.
[Another value of $k$ for which it holds is $k=0$. For then (p.\ L2)
$$e(x,y)=\frac{1}{n!}\sum_{\omega \in S_n}e^{\langle x,\omega y\rangle}$$ 
and (F1) gives 
\begin{eqnarray*}
\hat f(y) &=& \frac{1}{n!}\sum_{\omega \in S_n}\int_{\mathbb{R}^n}e^{-i\langle x,\omega y\rangle}f(x)\mathrm{d}x \\
&=& \int_{\mathbb{R}^n} e^{-i\langle x,y\rangle}f(x)\mathrm{d}x
\end{eqnarray*}
by symmetry, i.e., $\hat f(y)$ is the usual Fourier transform on $\mathbb{R}^n$.
So in this case (F2) is valid.]

Coming back to the cases $k=2,1,\frac{1}{2}$ we had ($x,y$ real diagonal matrices)
$$e(x,y)=\int_K e^{\tr (xkyk^{-1})}\mathrm{d}k$$
(double use of the letter $k  !$) so that
$$\frac{\partial}{\partial x_i} e(x,y)=\int_K (kyk^{-1})_i e^{\tr(xkyk^{-1})}\mathrm{d}k$$
and therefore, if we put
\begin{eqnarray*}
D_x &=& \sum^n_{i=1}\frac{\partial}{\partial x_i}, \\
D_x \, e(x,y) &=&\int_K {\rm trace}(kyk^{-1})\cdot e^{\tr (xkyk^{-1})}\mathrm{d}k \\
&=& {\rm trace}(y)\cdot e(x,y)
\end{eqnarray*}
or equivalently
$$\boxed{D_x e(x,y) = p_1(y)e(x,y)}$$
This is probably the fundamental property of the exponential kernel $e(x,y)$  ;  it is true for all values of $\alpha(=k^{-1})$. So it is indicated that the Fourier reciprocity theorem (F2) should be a consequence of this:

\underline{Example} of Fourier transform. Let
$$
f(x)=\left\{\begin{tabular}{ll}
$|x|^{a-p}|1-x|^{b-p}$, & if $x\in[0,1]^n$ \\
$0$, & otherwise
\end{tabular}
\right.
$$

Then 
\begin{eqnarray*}
\hat f(y) &=& \int_{\mathbb{R}^n}e(-ix,y)f(x)\mathrm{d}\sigma(x) \\
&=& \int_{[0,1]^n}e(-ix,y)|x|^{a-p}|1-x|^{b-p}\mathrm{d}\sigma(x) \\
&=& \sum_\lambda \frac{C_\lambda (y)}{|\lambda|!}(-i)^{|\lambda|}\int_{[0,1]^n}\Omega_\lambda(x) |x|^{a-p} |1-x|^{b-p}\mathrm{d}\sigma(x) \\
&=& \frac{\Gamma_n(a)\Gamma_n(b)}{\Gamma_n(a+b)}\sum_\lambda \frac{(a)_\lambda}{(a+b)_\lambda}\cdot \frac{C_\lambda(-iy)}{|\lambda|!}
\end{eqnarray*}
by Kadell's integral, see \eqref{eq_6.19}, p.26.

In other words
$$\hat f(y) = \frac{\Gamma_n(a)\Gamma_n(b)}{\Gamma_n(a+b)}  \mbox{ }_1F_1(a;a+b;-iy)$$
is the Fourier transform.

\medskip

But  to compute $\hat{\hat f}$ we cannot again integrate term by term.

\bigskip

\underline{Laplace transform}
\smallskip
\smallskip

This is
$$(Lf)(y)=\left(\int^\infty_0\right)^n e(-x,y)f(x)\mathrm{d}\sigma(x)$$
so that $Lf(iy)$ is the Fourier transform of $f(x)\chi(x)$ where $\chi$ is the characteristic function of the positive octant in $\mathbb{R}^n$. Hence
$$(2\pi)^nf(x)\chi(x) = \int_{\mathbb{R}^n}e(x,iy)Lf(iy)\mathrm{d}\sigma(y)$$
which is a version of the inverse Laplace transform (for suitable $f$).
\newpage

\section*{Differential equations for hypergeometric functions} 
Introduce the differential operators
$$\delta_r=\sum^n_{i=1}x^r_i D^2_{x_i}+2k\sum_{i\neq j}\frac{x^r_i D_{x_i}}{x_i-x_j}$$ 
$$\varepsilon_r=\sum^n_{i=1}x_i^{r-1}D_{x_i} \quad \quad \quad \quad (r\geq1)$$
where $D_{x_i}=\partial/\partial x_i$. 
For a partition $\lambda$, let
$$ \rho(\lambda)=n(\lambda')-kn(\lambda)$$
and for $\lambda \supset \mu$ Let
$$\rho(\lambda/\mu)=\rho(\lambda)-\rho(\mu).$$
If $\lambda-\mu$ is a single square $(i,j)$, then
$$\rho(\lambda/\mu)=j-1-k(i-1)=\mu_i-k(i-1)$$
and hence
$$(a)_\lambda/(a)_\mu=a+\rho(\lambda/\mu)$$
in this case.

\begin{lemmaB} 
Let
$$\Box_r=\frac{1}{r}\delta_r-k(n-1)\varepsilon_r \quad (r=1,2)$$
Then
\begin{itemize} 
\item[(i)]    $\Box_2\Omega_\lambda \mbox{  }=\mbox{  } \rho(\lambda)\Omega_\lambda \mbox{  },\mbox{  }\varepsilon_2\Omega_\lambda\mbox{  }=\mbox{  }|\lambda|\Omega_\lambda$  
\item[(ii)]    $\varepsilon_1\Omega_\lambda\mbox{  }=\mbox{  }\sum_\mu\binom{\lambda}{\mu}\Omega_\mu$ 
\item[(iii)]   $\Box_1\Omega_\lambda\mbox{  }=\mbox{  }\sum_\mu \rho(\lambda/\mu)\binom{\lambda}{\mu}\Omega_\mu$ 
\item[(iv)]   $[\Box_1,\Box_2]\mbox{  }=\mbox{  }\sum_\mu \rho(\lambda/\mu)^2\binom{\lambda}{\mu}\Omega_\mu$ 
\end{itemize}
The sums in {\rm  (ii)--(iv)} are over $\mu\subset\lambda$ such that $|\lambda-\mu|=1$. Here $\binom{\lambda}{\mu}=\varepsilon(J_{\lambda/\mu})$,   $\varepsilon$ the specialization defined by $\varepsilon(p_r)=\delta_{1r}$ (so   $\varepsilon(E(t))=\varepsilon(H(t))=e^t.$) 
\end{lemmaB}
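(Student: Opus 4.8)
The plan is to treat the four assertions in the order (i), (ii), (iii), (iv), reducing the genuinely new content to a single commutator identity among the operators.

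For (i) I would first note that $\varepsilon_2=\sum_i x_iD_{x_i}$ is the Euler operator, so $\varepsilon_2\Omega_\lambda=|\lambda|\Omega_\lambda$ because $\Omega_\lambda$ is homogeneous of degree $|\lambda|$. For the $\delta_2$-eigenvalue I would invoke the second-order operator $Ef=\Delta^{-2k}\sum_i x_iD_i(\Delta^{2k}x_iD_if)$ of the Laplace-transform section: expanding the weight derivatives gives $E=\delta_2+\varepsilon_2$, and since (Ch.~VI, \S10) $E\Omega_\lambda=\langle\lambda,\lambda+2k\delta\rangle\Omega_\lambda=\sum_i\lambda_i(\lambda_i+2k(n-i))\Omega_\lambda$, we obtain $\delta_2\Omega_\lambda=\sum_i\lambda_i(\lambda_i-1+2k(n-i))\Omega_\lambda$. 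Feeding this into $\Box_2=\tfrac12\delta_2-k(n-1)\varepsilon_2$ and using $k\lambda_i(n-i)-k(n-1)\lambda_i=-k(i-1)\lambda_i$ collapses the eigenvalue to $\sum_i\big(\tfrac12\lambda_i(\lambda_i-1)-k(i-1)\lambda_i\big)=n(\lambda')-kn(\lambda)=\rho(\lambda)$, which is (i). Assertion (ii) is exactly \eqref{eq_6.15} in disguise: since $\varepsilon_1=\sum_iD_{x_i}=D_n$ and $\binom{\lambda}{\mu}=\varepsilon(J_{\lambda/\mu})$, the degree-lowering part of $\Omega_\lambda(1_n+x)=\sum_\mu\binom{\lambda}{\mu}\Omega_\mu(x)$ reads $\varepsilon_1\Omega_\lambda=\sum_{|\lambda-\mu|=1}\binom{\lambda}{\mu}\Omega_\mu$, and I would simply cite it.

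The heart of the lemma is (iii), and the plan is to prove the operator identity $\Box_1=[\varepsilon_1,\Box_2]$ and then read off the expansion from (i) and (ii). To establish it I would compute the two commutators directly from $[D_a,x_b]=\delta_{ab}$. The first gives $[\varepsilon_1,\varepsilon_2]=\sum_a D_{x_a}=\varepsilon_1$. For the second, the polynomial part contributes $[\varepsilon_1,\sum_i x_i^2D_i^2]=2\sum_i x_iD_i^2$, while the rational part rests on the cancellation
\[ \Big(D_{x_i}+D_{x_j}\Big)\frac{x_i^2}{x_i-x_j}=\frac{x_i^2-2x_ix_j}{(x_i-x_j)^2}+\frac{x_i^2}{(x_i-x_j)^2}=\frac{2x_i}{x_i-x_j}, \]
giving $[\varepsilon_1,2k\sum_{i\neq j}\tfrac{x_i^2}{x_i-x_j}D_i]=2\cdot 2k\sum_{i\neq j}\tfrac{x_i}{x_i-x_j}D_i$; summing the two pieces yields $[\varepsilon_1,\delta_2]=2\delta_1$. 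Hence $[\varepsilon_1,\Box_2]=\tfrac12[\varepsilon_1,\delta_2]-k(n-1)[\varepsilon_1,\varepsilon_2]=\delta_1-k(n-1)\varepsilon_1=\Box_1$. Applying this to $\Omega_\lambda$ and using (i), (ii),
\[ \Box_1\Omega_\lambda=\varepsilon_1\Box_2\Omega_\lambda-\Box_2\varepsilon_1\Omega_\lambda=\sum_\mu\binom{\lambda}{\mu}\big(\rho(\lambda)-\rho(\mu)\big)\Omega_\mu=\sum_\mu\rho(\lambda/\mu)\binom{\lambda}{\mu}\Omega_\mu, \]
the sum running over $\mu$ with $|\lambda-\mu|=1$ since $\varepsilon_1\Omega_\lambda$ only involves such $\mu$; this is (iii).

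Finally (iv) is a purely formal consequence of (i) and (iii): as $\Box_2$ is diagonal on the $\Omega$'s,
\[ [\Box_1,\Box_2]\Omega_\lambda=\rho(\lambda)\Box_1\Omega_\lambda-\Box_2\Box_1\Omega_\lambda=\sum_\mu\binom{\lambda}{\mu}\rho(\lambda/\mu)\big(\rho(\lambda)-\rho(\mu)\big)\Omega_\mu=\sum_\mu\rho(\lambda/\mu)^2\binom{\lambda}{\mu}\Omega_\mu. \]
I expect the only real computation, and hence the main obstacle, to be the identity $[\varepsilon_1,\delta_2]=2\delta_1$ — specifically checking that the derivatives of the rational coefficients $x_i^2/(x_i-x_j)$ collapse as displayed; everything else is either quoted (the $\delta_2$-eigenvalue and \eqref{eq_6.15}) or formal bookkeeping. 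A secondary caveat worth flagging is that (iii) and (iv) inherit the dependence of (ii) on \eqref{eq_6.15}, which is established only in the appendix.
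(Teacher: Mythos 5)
Your proposal is correct and follows the paper's proof exactly: (i) as the Laplace--Beltrami eigenfunction property (you derive it from the operator $E=\delta_2+\varepsilon_2$ of the Laplace-transform section, which is the same fact), (ii) by citing \eqref{eq_6.15}, (iii) via the identity $\Box_1=[\varepsilon_1,\Box_2]$, and (iv) formally from (i) and (iii). The only difference is that you verify the commutator identity $[\varepsilon_1,\delta_2]=2\delta_1$ explicitly (correctly), where the paper merely asserts ``the fact that $\Box_1=[\varepsilon_1,\Box_2]$''; your caveat about the dependence on \eqref{eq_6.15} is also apt, since the appendix proof of that formula is itself left with an unresolved factor of $\alpha$.
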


\begin{proof} 
\mbox{}
\begin{itemize}
\item[(i)]     is a restatement of the fact that $\Omega_\lambda$ is an eigenfunction of the Laplace-Beltrami operator. 
\item[(ii)]    is proved elsewhere.
\item[(iii)]   follows from (i), (ii) and the fact that $\Box_1 = [\varepsilon_1 , \Box_2]$.
\item[(iv)]   follows from (i), (iii).
\end{itemize}
\end{proof}

\underline{Remark}.  More generally, we have
$$\sum_\mu \binom{\lambda}{\mu}(\rho(\lambda/\mu))^r \Omega_\mu = (-1)^r(({\rm ad}\, \Box_2)^r\varepsilon_1)\Omega_\lambda$$
for all $r\geq0$.

\begin{lemmaB}
\mbox{}
\begin{itemize}
\item[(i)]    $\varepsilon_1 e^{p_1}=ne^{p_1},\mbox{    }\varepsilon_2 e^{p_1} = p_1 e^{p_1}.$ 
\item[(ii)]    $\Box_1 e^{p_1}=p_1e^{p_1},\mbox{    }\Box_2 e^{p_1} = \frac{1}{2}p_2 e^{p_1}.$ 
\item[(iii)]   $[\Box_1,\Box_2]e^{p_1} = (p_1(k(n-1)+1)+p_2)e^{p_1}.$
\end{itemize}
\end{lemmaB}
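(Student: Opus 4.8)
The plan is to verify all three parts by direct differentiation, the only inputs being $D_{x_i}e^{p_1}=e^{p_1}$ (because $\partial p_1/\partial x_i=1$, whence also $D_{x_i}^2 e^{p_1}=e^{p_1}$) together with two symmetric sums: $\sum_{i\neq j}\frac{x_i}{x_i-x_j}=\tfrac12 n(n-1)$, already recorded earlier in the excerpt, and $\sum_{i\neq j}\frac{x_i^2}{x_i-x_j}=(n-1)p_1$, which follows by pairing the $(i,j)$ and $(j,i)$ terms, since $\frac{x_i^2}{x_i-x_j}+\frac{x_j^2}{x_j-x_i}=x_i+x_j$. Part (i) is then immediate: $\varepsilon_1=\sum_i D_{x_i}$ and $\varepsilon_2=\sum_i x_i D_{x_i}$ send $e^{p_1}$ to $ne^{p_1}$ and $p_1 e^{p_1}$ respectively, with no cross terms.

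For part (ii) I would first compute $\delta_1 e^{p_1}=(p_1+kn(n-1))e^{p_1}$ and $\delta_2 e^{p_1}=(p_2+2k(n-1)p_1)e^{p_1}$ using the two sums above, and then assemble $\Box_r=\tfrac1r\delta_r-k(n-1)\varepsilon_r$. The point is that the $k(n-1)$ contributions coming from the pole part of $\delta_r$ cancel exactly against those coming from $\varepsilon_r$ (using part (i)), leaving $\Box_1 e^{p_1}=p_1 e^{p_1}$ and $\Box_2 e^{p_1}=\tfrac12 p_2 e^{p_1}$.

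For part (iii) I would reduce to the previous parts by writing $[\Box_1,\Box_2]e^{p_1}=\Box_1\bigl(\tfrac12 p_2 e^{p_1}\bigr)-\Box_2\bigl(p_1 e^{p_1}\bigr)$. To evaluate these I would establish a Leibniz rule for a polynomial factor $g$: from $D_{x_i}(ge^{p_1})=(D_{x_i}g+g)e^{p_1}$ and $D_{x_i}^2(ge^{p_1})=(D_{x_i}^2 g+2D_{x_i}g+g)e^{p_1}$ one obtains
\[
\Box_1(ge^{p_1})=\bigl(\Box_1 g+2\varepsilon_2 g+p_1 g\bigr)e^{p_1},\qquad
\Box_2(ge^{p_1})=\bigl(\Box_2 g+\varepsilon_3 g+\tfrac12 p_2 g\bigr)e^{p_1},
\]
where once more the $k(n-1)$ terms cancel. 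Substituting $g=\tfrac12 p_2$ and $g=p_1$ and using the routine values $\Box_1 p_2=2(k(n-1)+1)p_1$, $\Box_2 p_1=0$, $\varepsilon_2 p_2=2p_2$, $\varepsilon_3 p_1=p_2$, the two cross terms $\tfrac12 p_1 p_2$ cancel and the difference collapses to $\bigl(p_1(k(n-1)+1)+p_2\bigr)e^{p_1}$. I expect the main obstacle to be precisely this bookkeeping in part (iii): keeping the non-polynomial pole terms $\sum_{i\neq j}x_i^r/(x_i-x_j)$ under control inside the Leibniz rule, and verifying that the two cross terms $\tfrac12 p_1 p_2$ cancel exactly so that only $p_1(k(n-1)+1)+p_2$ survives.
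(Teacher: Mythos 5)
Your proposal is correct and follows essentially the same route as the paper's own proof: (i) directly from $D_{x_i}e^{p_1}=e^{p_1}$; (ii) by the same cancellation of the $k(n-1)$ contributions (the paper absorbs them into the antisymmetric kernels $k\sum_{i\neq j}\frac{x_i+x_j}{x_i-x_j}D_{x_i}$ and $k\sum_{i\neq j}\frac{x_ix_j}{x_i-x_j}D_{x_i}$, which is equivalent to your two symmetric-sum identities); and (iii) via the identical decomposition $[\Box_1,\Box_2]e^{p_1}=\Box_1\bigl(\tfrac12 p_2e^{p_1}\bigr)-\Box_2\bigl(p_1e^{p_1}\bigr)$ followed by direct expansion. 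Your general Leibniz rules $\Box_1(ge^{p_1})=(\Box_1 g+2\varepsilon_2 g+p_1 g)e^{p_1}$ and $\Box_2(ge^{p_1})=(\Box_2 g+\varepsilon_3 g+\tfrac12 p_2 g)e^{p_1}$ are merely a tidier packaging of the paper's inline computations such as $D_{x_i}^2(p_2e^{p_1})=(2+4x_i+p_2)e^{p_1}$, with the same final cancellation of the cross terms $\tfrac12 p_1p_2$.
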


\begin{proof} (i) is clear since $D_{x_i}e^{p_1}=e^{p_1}(1\leq i \leq n)$ \\
(ii) we have
\begin{eqnarray*}
\Box_1 &=& \sum^n_{i=1}x_i D_{x_i}^2 + 2k\sum_{i\neq j} \frac{x_i}{x_i - x_j}D_i - k(n-1)\sum^n_{i=1}D_i  \\
&=& \sum^n_{i=1} x_i D^2_{x_i}+ k\sum_{i\neq j}\frac{x_i+x_j}{x_i-x_j}D_i
\end{eqnarray*}
so that $$ \Box_1 e^{p_1} = \left(p_1 + k\sum_{i\neq j}\frac{x_i+x_j}{x_i-x_j}\right)e^{p_1}=p_1 e^{p_1} .$$
Likewise, 
\begin{eqnarray*}
\Box_2 &=& \frac{1}{2}\sum x_i^2D_{x_i}^2 + k\sum_{i\neq j}\frac{x_i^2}{x_i-x_j}D_{x_i}-k(n-1)\sum x_iD_i \\
&=& \frac{1}{2}\sum x_i^2 D_{x_i}^2 + k\sum_{i\neq j}\frac{x_i x_j}{x_i-x_j}D_{x_i}
\end{eqnarray*}
so that
$$ \Box_2 e^{p_1}=\left(\frac{1}{2}p_2+k\sum_{i\neq j} \frac{x_ix_j}{x_i-x_j}\right)e^{p_1}=\frac{1}{2}p_2e^{p_1}.$$
(iii) We have
$$[\Box_1,\Box_2]e^{p_1}=\Box_1\left(\frac{1}{2}p_2e^{p_1}\right)-\Box_2(p_1 e^{p_1}).$$
Now $$ D_{x_i}^2(p_2e^{p_1}) = (2 + 4x_i + p_2) e^{p_1}$$
so that
\begin{eqnarray*}
\Box_1(\frac{1}{2}p_2e^{p_1}) &=&\frac{1}{2}\sum(2x_i+4x_i^2+x_i p_2)e^{p_1}+\frac{1}{2}k\sum_{i\neq j}\frac{x_i+x_j}{x_i-x_j}(2x_i+p_2)e^{p_1} \\
&=& \left(p_1+2p_2+\frac{1}{2}p_1 p_2 +k\sum_{i\neq j}\frac{x_i^2+x_ix_j}{x_i-x_j}\right)e^{p_1} \\
&=& \left(p_1(1+k(n-1))+2p_2+\frac{1}{2}p_1 p_2\right)e^{p_1}.
\end{eqnarray*}
Next, we have
\begin{eqnarray*}
\Box_2 (p_1e^{p_1}) &=& \frac{1}{2}\sum x_i^2(2+p_1)e^{p_1}+k\sum_{i\neq j}\frac{x_ix_j(1+p_1)}{x_i-x_j}e^{p_1} \\
&=& \left(p_2+ \frac{1}{2}p_1 p_2\right)e^{p_1}
\end{eqnarray*}
so that finally
$$[\Box_1,\Box_2]e^{p_1}=(p_1(1+k(n-1))+p_2)e^{p_1}. $$
\end{proof}

\begin{lemmaB} 
\mbox{}
\begin{itemize}
\item[(i)]    ${\displaystyle \sum_\lambda} \binom{\lambda}{\mu} J^*_\lambda (1)=nkJ^*_\mu (1)$ 
\item[(ii)]   ${\displaystyle \sum_\lambda} \binom{\lambda}{\mu}\rho(\lambda/\mu)J^*_\lambda (1) = |\mu|kJ^*_\mu (1)$ 
\item[(iii)]   ${\displaystyle \sum_\lambda} \binom{\lambda}{\mu} \rho(\lambda/\mu)^2 J^*_\lambda (1) = ((1+(n-1)k)|\mu|+2\rho(\mu))kJ^*_\mu(1)$, 
\end{itemize}
summed in each case over $\lambda\supset\mu$ such that $|\lambda-\mu|=1$.
\end{lemmaB}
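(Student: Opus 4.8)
The engine of all three identities is the single generating function
$$G(x) \;=\; \sum_\lambda J^*_\lambda(x) \;=\; \sum_\lambda J^*_\lambda(1_n)\,\Omega_\lambda(x),$$
the second equality being just $J^*_\lambda = J^*_\lambda(1_n)\,\Omega_\lambda$. First I would identify $G=e^{kp_1}$: by \eqref{eq_6.7} and \eqref{eq_6.4} one has $e^{p_1}=\sum_\lambda \alpha^{|\lambda|}J^*_\lambda$, and replacing $x$ by $kx$ (so $p_1\mapsto kp_1$ and $J^*_\lambda\mapsto k^{|\lambda|}J^*_\lambda$, while $\alpha k=1$) gives $e^{kp_1}=\sum_\lambda J^*_\lambda$. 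The plan is then to apply each of $\varepsilon_1$, $\Box_1$, $[\Box_1,\Box_2]$ to $G$ and compute the coefficient of $\Omega_\mu$ in two ways. On one hand, the lowering relations already established for $\Omega_\lambda$, namely $\varepsilon_1\Omega_\lambda=\sum_\mu\binom{\lambda}{\mu}\Omega_\mu$, $\Box_1\Omega_\lambda=\sum_\mu\rho(\lambda/\mu)\binom{\lambda}{\mu}\Omega_\mu$ and $[\Box_1,\Box_2]\Omega_\lambda=\sum_\mu\rho(\lambda/\mu)^2\binom{\lambda}{\mu}\Omega_\mu$, applied termwise to $G=\sum_\lambda J^*_\lambda(1_n)\Omega_\lambda$ produce, as coefficient of $\Omega_\mu$, exactly the three left-hand sides $\sum_{\lambda\supset\mu}\binom{\lambda}{\mu}\rho(\lambda/\mu)^r J^*_\lambda(1_n)$ for $r=0,1,2$. (Each operator lowers degree by one, so at a fixed degree only finitely many $\lambda$ with $|\lambda|=|\mu|+1$ contribute and the termwise computation is legitimate.)

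On the other hand, I would compute these same three functions directly on $e^{kp_1}$. Since $D_i e^{kp_1}=k\,e^{kp_1}$, and using the simplified forms $\Box_1=\sum_i x_iD_i^2+k\sum_{i\ne j}\frac{x_i+x_j}{x_i-x_j}D_i$ and $\Box_2=\tfrac12\sum_i x_i^2D_i^2+k\sum_{i\ne j}\frac{x_ix_j}{x_i-x_j}D_i$ obtained above, the antisymmetric off-diagonal sums vanish and one reads off immediately
$$\varepsilon_1 e^{kp_1}=nk\,e^{kp_1},\qquad \Box_1 e^{kp_1}=k^2p_1\,e^{kp_1},\qquad \Box_2 e^{kp_1}=\tfrac12 k^2p_2\,e^{kp_1}.$$
For the commutator I would avoid a direct double differentiation by rescaling. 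Writing $S_c\colon f(x)\mapsto f(cx)$, homogeneity of the operators gives $S_c^{-1}\Box_1S_c=c\Box_1$ and $S_c^{-1}\Box_2S_c=\Box_2$, hence $S_c^{-1}[\Box_1,\Box_2]S_c=c[\Box_1,\Box_2]$. Applying this with $c=k$ to the already-proven identity $[\Box_1,\Box_2]e^{p_1}=\big((1+(n-1)k)p_1+p_2\big)e^{p_1}$ yields $[\Box_1,\Box_2]e^{kp_1}=\big(k^2(1+(n-1)k)p_1+k^3p_2\big)e^{kp_1}$.

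It then remains to re-expand $p_1 e^{kp_1}$ and $p_2 e^{kp_1}$ in the $\Omega$-basis, which is where the eigen-relations $\varepsilon_2\Omega_\mu=|\mu|\Omega_\mu$ and $\Box_2\Omega_\mu=\rho(\mu)\Omega_\mu$ enter: from $\varepsilon_2 e^{kp_1}=kp_1e^{kp_1}$ I get $p_1 G=k^{-1}\sum_\mu|\mu|J^*_\mu(1_n)\Omega_\mu$, and from $\Box_2 e^{kp_1}=\tfrac12k^2p_2e^{kp_1}$ I get $p_2 G=2k^{-2}\sum_\mu\rho(\mu)J^*_\mu(1_n)\Omega_\mu$. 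The coefficient of $\Omega_\mu$ is thus $nk\,J^*_\mu(1_n)$ for $\varepsilon_1 G$, then $k|\mu|J^*_\mu(1_n)$ for $\Box_1 G=k^2p_1G$, and finally $k\big((1+(n-1)k)|\mu|+2\rho(\mu)\big)J^*_\mu(1_n)$ for $[\Box_1,\Box_2]G$; equating these with the termwise computation gives (i), (ii), (iii) respectively. The only genuinely delicate step is the commutator in (iii), and the scaling relation $S_k^{-1}[\Box_1,\Box_2]S_k=k[\Box_1,\Box_2]$ is precisely what lets me import the previously computed action of $[\Box_1,\Box_2]$ on $e^{p_1}$ instead of recomputing it, so that is the point to get exactly right.
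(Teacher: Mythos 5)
Your proposal is correct and is essentially the paper's own proof: both apply $\varepsilon_1$, $\Box_1$, $[\Box_1,\Box_2]$ to the exponential generating function, use the lowering relations of Lemma 1 termwise on one side and the computed action on the exponential (Lemma 2) on the other, and equate coefficients in the $\Omega_\mu$ (equivalently $J^*_\mu$) basis. Your rescaling $x\mapsto kx$, which trades the paper's $\alpha^{|\lambda|}$ weights in $e^{p_1}=\sum_\lambda\alpha^{|\lambda|}J^*_\lambda$ for powers of $k$ via $e^{kp_1}=\sum_\lambda J^*_\lambda$, and your dilation-conjugation identity $S_k^{-1}[\Box_1,\Box_2]S_k=k[\Box_1,\Box_2]$ (valid since $\Box_1$ lowers degree by one and $\Box_2$ preserves it) are a correct cosmetic repackaging of the same argument, with the same re-expansion of $p_1$ and $p_2$ times the exponential through $\varepsilon_2$ and $\Box_2$.
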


\begin{proof} Operate on $e^{p_1}=\sum_\lambda \alpha^{|\lambda|}J^*_\lambda$ with (i) $\varepsilon_1$, (ii) $\Box_1$, (iii) $(\Box_1 , \Box_2)$. \\
(i) We obtain
\begin{eqnarray*}
n\sum_\mu \alpha^{|\mu|}J^*_\mu &=& \sum_\lambda \alpha^{|\lambda|} \varepsilon_1(J^*_\lambda) \\
&=& \sum_\lambda \alpha^{|\lambda|}J^*_\lambda (1)\sum_{\mu\subset\lambda}\binom{\lambda}{\mu}J^*_\mu/J^*_\mu (1)
\end{eqnarray*}
by Lemma 1 (ii). Now equate coefficients of $J^*_\mu$:
$$nJ^*_\mu (1) = \alpha\sum_\lambda\binom{\lambda}{\mu}J^*_\lambda (1).$$
(ii) Likewise, since $$\Box_1e^{p_1}=p_1e^{p_1}=\varepsilon_2e^{p_1}=\sum_\mu|\mu|\alpha^{|\mu|}J^*_\mu ,$$
we have
\begin{eqnarray*}
\sum_\mu |\mu|\alpha^{|\mu|}J^*_\mu &=& \sum_\lambda \alpha^{|\lambda|}\Box_1 (J^*_\lambda) \\
&=&\sum_\lambda \alpha^{|\lambda|}J^*_\lambda (1)\sum_{\mu\subset\lambda}\binom{\lambda}{\mu}\rho(\lambda/\mu)J^*_\mu/J^*_\mu (1)
\end{eqnarray*}
by Lemma 1(iii). So we obtain $$|\mu|J^*_\mu (1) = \alpha \sum_\lambda \binom{\lambda}{\mu}\rho(\lambda/\mu)J^*_\lambda (1).$$
(iii) Finally, we have from Lemma 2.
$$[\Box_1 , \Box_2]e^{p_1}=(1+k(n-1))\varepsilon_2 e^{p_1}+2\Box_2 e^{p_1}$$
and therefore
$$
\sum_\mu ((1+k(n-1))|\mu|+2\rho(\mu))\alpha^{|\mu|}J^*_\mu 
= \sum_\lambda \alpha^{|\lambda|} J^*_\lambda (1)\sum_{\mu\subset\lambda}\binom{\lambda}{\mu}\rho(\lambda/\mu)^2J^*_\mu/J^*_\mu (1)
$$
using Lemma 1 (iv); hence
$$((1+k(n-1))|\mu|+2\rho(\mu))J^*_\mu (1) = \alpha \sum_{\lambda\supset\mu}\binom{\lambda}{\mu}\rho(\lambda/\mu)^2 J^*_\lambda (1).$$
\end{proof}

Now consider
$$ _2 F_1 (a,b;c;x)=\sum_\lambda \frac{(a)_\lambda (b)_\lambda}{(c)_\lambda}\alpha^{|\lambda|}J^*_\lambda.$$
Since
\begin{eqnarray*}
(\Box_1 + c\varepsilon_1)\Omega_\lambda &=& \sum_\mu \binom{\lambda}{\mu}(\rho(\lambda/\mu)+c)\Omega_\mu \\
&=& \sum_\mu \binom{\lambda}{\mu} \frac{(c)_\lambda}{(c)_\mu}\Omega_\mu
\end{eqnarray*}
it follows that
\begin{eqnarray*}
(\Box_1 +c\varepsilon_1)\cdot \mbox{}_2F_1 &=& \sum_{\substack{\lambda,\mu\\ \lambda\supset\mu}} \frac{(a_\lambda)(b_\lambda)}{(c)_\mu} \binom{\lambda}{\mu}\alpha^{|\lambda|}J^*_\lambda(1)\Omega_\mu\\
&=& \sum_\mu \frac{(a)_\mu (b)_\mu}{(c)_\mu}\Omega_\mu \cdot \sum_{\lambda\supset\mu}(a+\rho(\lambda/\mu))(b+\rho(\lambda/\mu))\binom{\lambda}{\mu}\alpha^{|\lambda|}J^*_\lambda(1).
\end{eqnarray*}
Now from Lemma 3 we have
$$
\sum_{\lambda\supset\mu}(a+\rho(\lambda/\mu))(b+\rho(\lambda/\mu))\binom{\lambda}{\mu}J^*_\lambda (1) 
= k(abn+(a+b)|\mu|+((n-1)k+1)|\mu|+2\rho(\mu))J^*_\mu (1)
$$
and therefore
\begin{eqnarray*}
(\Box_1 + c\varepsilon_1)\cdot\mbox{}_2 F_1 &=& \sum_\mu \frac{(a)_\mu (b)_\mu}{(c)_\mu}\alpha^{|\mu|} (abn+(a+b+1+(n-1)k)|\mu|+2\rho(\mu))J^*_\mu \\
&=& (abn+(a+b+1+(n-1)k)\varepsilon_2 + 2\Box_2)\,\mbox{}_2F_1 \\
&=& (abn+(a+b+1-(n-1)k)\varepsilon_2+\delta_2)\,\mbox{}_2 F_1.
\end{eqnarray*}
Hence the differential equation satisfied by $f=\mbox{}_2F_1$  is
$$_2 \Phi_1 (f)=0$$
where
\begin{eqnarray*}
_2 \Phi_1 &=& \delta_2 - \delta_1 + (a+b+1-(n-1)k)\varepsilon_2 -(c-(n-1)k)\varepsilon_1 + abn \\
&=& \sum^n_{i=1}\left\{(x_i^2-x_i)D_{x_i}^2+(a+b+1)x_iD_i-cD_i+ab\right\} + (n-1)k\sum^n_{i=1}(1-x_i)D_{x_i}+2k\sum_{i\neq j} \frac{x^2_i -x_i}{x_i-x_j}D_{x_i}
\end{eqnarray*}
i.e.,
$$_2\Phi_1 =\sum^n_{i=1}\left\{(x_i D_{x_i}+a)(x_iD_{x_i}+b)-D_{x_i}(x_iD_{x_i}+c-1)\right\} + k\sum_{i\neq j}(x_i -1)\frac{x_i+x_j}{x_i -x_j} D_{x_i}$$
The ``error term'' in the 2nd line is a derivation.

Likewise, for $_1F_1 (a;c;x)$ the corresponding operator is
\begin{eqnarray*}
_1\Phi_1 &=& \delta_1 + (c-(n-1)k)\varepsilon_1 - \varepsilon_2 - na \\
&=& \sum^n_{i=1}\left\{D_{x_i}(x_i D_{x_i}+c-1)-(x_i D_{x_i}+a)\right\}+2k\sum_{i\neq j}\frac{x_i}{x_i - x_j}D_{x_i}-(n-1)k\sum D_i \\
&=& \sum^n_{i=1}\left\{D_{x_i}(x_i D_{x_i}+c-1)-(x_i D_{x_i}+a)\right\}+k\sum_{i\neq j}\frac{x_i+x_j}{x_i-x_j}D_{x_i}.
\end{eqnarray*}
For $_0F_1(c;x)$ it is
\begin{eqnarray*}
_0\Phi_1 &=& \delta_1+(c-(n-1)k)\varepsilon_1-n \\
&=& \sum^n_{i=1}(D_{x_i}(x_i D_{x_i}+c-1)-1)+k\sum_{i\neq j}\frac{x_i+x_j}{x_i-x_j}D_{x_i}.
\end{eqnarray*}

For the hypergeometric kernels $_pF_q(x,y)$ the differential operators have been calculated\footnote{Editorial note: Constantine and Muirhead calculate the differential operators only for ${}_2F_1$ and
degenerate cases, not for general ${}_pF_q$.
For ${}_3F_2$ this was done by Fujikoshi \cite{Fujikoshi75}.} by Constantine \& Muirhead \cite{ConstantineMuirhead72} in the case $k=\frac{1}{2}$, presumably they are as follows for arbitrary $k$:--

\bigskip
\indent\indent
\begin{tabular}{lll}
$_0F_0(x,y)$ &\phantom{XXX} &$\delta_{1,x}-\varepsilon_{3,y}-k(n-1)p_1(y)$ \\
$_1F_0(x,y)$ & &$\delta_{1,x}-\delta_{3,y}-(a+1-k(n-1))\varepsilon_{3,y}-ak(n-1)p_1(y)$ \\
$_0F_1(x,y)$ & &$\delta_{1,x}+(c-k(n-1))\varepsilon_{1,x}-p_1(y)$ \\
$_1F_1(x,y)$ & &$\delta_{1,x}+(c-k(n-1))\varepsilon_{1,x}-\varepsilon_{3,y}-ap_1(y)$ \\
$_2F_1(x,y)$ & &$\delta_{1,x}+(c-k(n-1))\varepsilon_{1,x}-(a+b)\varepsilon_{3,y}-\delta_{3,y}-abp_1(y)$,
\end{tabular}

\bigskip
\noindent
where 
\begin{eqnarray*}
& & \delta_{3,x}=\sum x_i^3 D^3_{x_i}+2k\sum_{i\neq j}\frac{x^3_i}{x_i-x_j}D_{x_i} \\
& & \varepsilon_{3,x}=\sum x^2_i D_{x_i}.
\end{eqnarray*}
\newpage

\section{Jacobi polynomials}
(For the cases $\alpha = 2,1$ see James \& Constantine  \cite{JamesConstantine74}.)

Let $a,b>0$ \& with our usual notation define

\begin{equation} 
u_{a.b}(x) = u(x) = |x|^a|1-x|^b\Delta(x)^{2k}
\label{eq_9.1}
\end{equation}
and a scalar product
\begin{equation} 
\langle f,g \rangle_{a,b} = \left(\int^1_0\right)^n f(x)g(x)u(x)\mathrm{d}x.
\label{eq_9.2}
\end{equation}
Let $E=E_{a,b}$ be the differential operator defined by
\begin{equation} 
Ef=-u(x)^{-1}\sum^n_{i=1}D_i(x_i(1-x_i)u(x)D_if)
\label{eq_9.3}
\end{equation}
where $D_i=\partial/\partial x_i$.

\begin{flalign}
\label{eq_9.4}
\mbox{$E$ is self-adjoint for the scalar product \eqref{eq_9.2}, i.e.,}&& 
\end{flalign}
$$\langle Ef,g \rangle = \langle f, Eg \rangle.$$

\begin{proof} By definition we have
\begin{eqnarray*}
\langle Ef,g \rangle &=& -\left( \int^1_0 \right)^n \sum^n_{i=1}D_i(x_i(1-x_i)u(x)D_if(x))g(x)\mathrm{d}x \\
&=& + \left( \int^1_0 \right)^n \sum^n_{i=1}x_i(1-x_i)u(x)D_if(x)D_ig(x)\mathrm{d}x
\end{eqnarray*}
on integrating by parts. Since this expression is symmetrical in $f$ and $g$, \eqref{eq_9.4} is proved.
\end{proof}

We need to calculate $E$ more explicitly. For this purpose we introduce the notation
\begin{equation} 
U_{r,s} = \frac{1}{s!}\sum^n_{i=1} x_i^rD_i^s, \quad V_r = \sum_{i\neq j}\frac{x_iD_i}{x_i-x_j}.
\label{eq_9.5}
\end{equation}
Then
\begin{equation} 
E_{a,b} = (a+1)U_{0,1} + (a+b+2)U_{1,1} + 2(U_{2,2} - U_{2,2}) + 2k(V_2-V_1).
\label{eq_9.6}
\end{equation}

\begin{proof} From the definition in \eqref{eq_9.3} we have
\begin{equation*}
Ef = -\sum^n_{i=1}x_i(1-x_i)D_i^2f-\sum(1-2x_i)D_if - \sum_{i=1}^nx_i(1-x_i)D_i(\log u(x))D_if
\end{equation*}
and
\begin{equation*}
D_i(\log u(x)) = \frac{a}{x_i} - \frac{b}{1-x_i} + 2k \sum_{j\neq i}\frac{1}{x_i-x_j}
\end{equation*}
\eqref{eq_9.6} follows easily from these.
\end{proof}

Recall that if
\begin{eqnarray*}
\Box_2 &=& U_{22}+kV_2-k(n-1)U_{11} \\
\Box_1 &=& 2(U_{12}+kV_{1})-k(n-1)U_{01}
\end{eqnarray*}
then
\begin{eqnarray}
\nonumber
\Box_2\Omega_{\lambda} &=& (n(\lambda')-kn(\lambda))\Omega_{\lambda} \\
 &=& \rho(\lambda;\alpha)\Omega_{\lambda} \quad \mbox{say}
\label{eq_9.7}
\end{eqnarray}
and
\begin{equation}
\left\{
\begin{array}{lll}
\displaystyle
\Box_1\Omega_{\lambda} &=&  \displaystyle\sum_{\mu\subset\lambda}\binom{\lambda}{\mu}\rho(\lambda/\mu;\alpha)\Omega_{\mu} \\
\\
U_{01}\Omega_{\lambda} &=&\displaystyle \sum_{\mu\subset\lambda}\binom{\lambda}{\mu}\Omega_{\mu}
\end{array}
\right.
\label{eq_9.8}
\end{equation}
summed in both cases over $\mu\subset\lambda$ such that $|\lambda-\mu|=1$.

\bigskip

From \eqref{eq_9.6} we can rewrite $E_{a,b}$ in the form
\begin{equation}
E_{a,b} = -(a+p)U_{01} + (a+b+2p)U_{11}-\Box_1 + 2\Box_2
\label{eq_9.9}
\end{equation}
where as usual $p=(n-1)k+1$.

On occasion it will be convenient to change the parameters.\footnote{(9.10) in the manuscript reads: ``$A \; \alpha\!\!\!/=a+p,\;\,\, C\; \gamma\!\!\!/=a+b+2p$.'' The $\alpha$ and $\gamma$ are crossed over and replaced by $A$ and $C$, but these changes are not fully propagated in the remainder of the section. We typeset everything with $A$ and $C$ as we believe it to be correct and refer the reader to the manuscript for details should there be confusion with these parameters.}
\begin{equation}
A=a+p,\quad C=a+b+2p
\label{eq_9.10}
\end{equation}
(according with the usage of James \& Constantine) in this notation \eqref{eq_9.9} takes the form
\begin{equation}
E_{a,b} = 2\Box_2 - \Box_1 + CU_{11} - AU_{01}.
\label{eq_9.9p}
\tag{\ref{eq_9.9}$'$}
\end{equation} 

From \eqref{eq_9.7}, \eqref{eq_9.8}, \eqref{eq_9.9p} we have
\begin{equation}
E\Omega_{\lambda} = (C|\lambda| + 2\rho(\lambda))\Omega_{\lambda} - \sum_{\substack{\mu\subset\lambda\\|\lambda-\mu|=1}}(A+\rho(\lambda/\mu))\binom{\lambda}{\mu}\Omega_{\mu}
\label{eq_9.11}
\end{equation}

\underline{Remark}. By comparison, the hypergeometric differential operator is
\begin{equation*}
\Phi_{a,b;c} = 2\Box_2 - \Box_1 + (a+b+p)U_{11} - cU_{10} + abn.
\end{equation*}
So it is not very different from $E_{a,b}$.

We now define \underline{Jacobi polynomials} $G_{\lambda}^{(a,b)}(x;\alpha)$---at present only up to a scalar factor---to be of the form
\begin{equation}
G_{\lambda}^{(a,b)} = \sum_{\mu\subset\lambda}u_{\lambda\mu}\Omega_{\mu}
\label{eq_9.12}
\end{equation}
and eigenfunctions of $E_{a,b}$, 
\begin{equation}
E_{a,b}G_{\lambda}^{(a,b)} = ((a+b+2p)|\lambda|+2\rho(\lambda))G_{\lambda}^{(a,b)}.
\label{eq_9.13}
\end{equation}

The usual argument will show that
\begin{equation}
\langle G_{\lambda},G_{\mu}\rangle_{a,b} = 0 \quad \mbox{if} \quad \lambda\neq\mu
\label{eq_9.14}
\end{equation}
i.e., they are pairwise orthogonal for the scalar product \eqref{eq_9.2}.

We write $G_{\lambda}$ in the form
\begin{equation}
\widetilde{G}_{\lambda}^{(a,b)} = \sum_{\mu\subset\lambda}\frac{(-1)^{|\mu|}}{(a+p)_{\mu}}c_{\lambda/\mu}(a,b)\Omega_{\mu}
\label{eq_9.15}
\end{equation}

Then \eqref{eq_9.11}, \eqref{eq_9.13} give a recursion formula for the coefficients $c_{\lambda/\mu}$. For by operating with $E_{a,b}$ on either side of \eqref{eq_9.15} we obtain, in the notation \eqref{eq_9.10}
\begin{multline*}
(C^{|\lambda|}+2\rho(\lambda))\sum_{\mu\subset\lambda}\frac{(-1)^{|\mu|}}{(A)_{\mu}}c_{\lambda/\mu}\Omega_{\mu} \\
= \sum_{\mu\subset\lambda}\frac{(-1)^{|\mu|}}{(A)_{\mu}}c_{\lambda/\mu}(C|\mu|+2\rho(\mu))\Omega_{\mu} 
 -  \sum_{\substack{\nu\subset\mu\subset\lambda\\|\mu-\nu|=1}}\frac{(-1)^{|\mu|}}{(A)_{\mu}}c_{\lambda/\mu}(A + \rho(\mu/\nu))\binom{\mu}{\nu}\Omega_{\nu}
\end{multline*}
and therefore, equating coefficients of $\Omega_{\mu}$,
\begin{equation*}
(C|\lambda - \mu| + 2\rho(\lambda/\mu))c_{\lambda/\mu} =  \sum_{\substack{\nu\\\mu\subset\nu\subset\lambda\\|\nu-\mu|=1}}(A + \rho(\nu/\mu))\frac{(A)_{\mu}}{(A)_{\nu}}\binom{\nu}{\mu}c_{\lambda/\nu}
\end{equation*}

One checks that $(A)_{\nu}/(A)_{\mu} = A + \rho(\nu/\mu)$, so that we obtain the \underline{recursion} formula
\begin{equation}
(C|\lambda - \mu| + 2\rho(\lambda/\mu))c_{\lambda/\mu} = \sum_{\nu}\binom{\nu}{\mu}c_{\lambda/\nu}\quad\quad (\lambda\neq\mu)
\label{eq_9.16}
\end{equation}
summed over $\nu$ such that $\lambda\supset\nu\supset\mu$ and $|\nu-\mu|=1$.

This shows that $c_{\lambda/\mu}$ is a rational function of $C$ (and $k$): it does not depend on $A$. Normalize it by $c_{\lambda/\lambda} = 1$. The polynomial $\widetilde{G}_{\lambda}^{(a,b)}(1-x)$ must be proportional to $\widetilde{G}_{\lambda}^{(b,a)}(x)$.

 We have
\begin{equation*}
\widetilde{G}_{\lambda}^{(a,b)}(1-x) = \sum_{\mu\subset\lambda}(-1)^{|\mu|}\frac{c_{\lambda/\mu}}{(a+p)_{\mu}}\Omega_{\mu}(1-x)
\end{equation*}
in which the coefficient of $\Omega_{\lambda}(x)$ is $1/(a+p)_{\lambda}$. On the other hand, the coefficient of $\Omega_{\lambda}(x)$ in $\widetilde{G}_{\lambda}^{(b,a)}(x)$ is clearly $(-1)^{|\lambda|}/(b+p)_{\lambda}$, and it follows that
\begin{equation}
(a+p)_{\lambda}\,\widetilde{G}_{\lambda}^{(a,b)}(1-x)=(-1)^{|\lambda|}(b+p)_{\lambda}\,\widetilde{G}_{\lambda}^{(b,a)}(x)
\label{eq_9.17}
\end{equation}
so we define
\begin{equation}
G_{\lambda}^{(a,b)}(x) = (a+p)_{\lambda}\,\widetilde{G}^{(a,b)}_{\lambda}(x)
\label{eq_9.18}
\end{equation}
and then we have symmetry:
\begin{equation}
G_{\lambda}^{(a,b)}(1-x) = (-1)^{|\lambda|} \, G_{\lambda}^{(b,a)}(x).
\label{eq_9.19}
\end{equation}

$\big[$One should probably normalize further:\footnote{See Additional observation at the end of this section.} as at present defined, the constant term in $G_{\lambda}^{(a,b)}(x)$ is $$(a+p)_{\lambda}\, c_{\lambda/0}$$ whereas in the case $n=1$ it is $((\lambda)=r)\binom{a+r}{r}=\frac{(a+1)_r}{r!}.\big]$

Notice that \eqref{eq_9.17} implies the recurrence formula
\begin{equation}
\frac{(b+p)_{\lambda}}{(b+p)_{\nu}}c_{\lambda/\nu} = \sum_{\mu}(-1)^{|\lambda|-|\mu|}\binom{\mu}{\nu}\frac{(a+p)_{\lambda}}{(a+p)_{\mu}}c_{\lambda/\mu}
\label{eq_9.20}
\end{equation}
(and likewise with $a,b$ interchanged ; the $c$'s are functions of $a+b$).

\bigskip

\noindent\underline{Special cases}
\nopagebreak
\smallskip

\noindent (1) $\lambda = (r)$. If $\mu = (s), s\leq r$, then the recurrence \eqref{eq_9.16} gives
\begin{equation*}
(C(r-s) + r(r-1)- s(s-1))c_{r/s} = \binom{s+1}{s}c_{r/(s+1)}
\end{equation*}
i.e.,
\begin{equation*}
(r-s)(C+r+s-1)c_{r/s} = (s+1)c_{r/(s+1)}
\end{equation*}
or
\begin{equation*}
c_{r/s} = \frac{s+1}{(r-s)(C+r+s-1)}c_{r/(s+1)}
\end{equation*}
giving
\begin{equation*}
c_{r/s} = \binom{r}{r-s}\prod_{i=1}^{r-s}\frac{1}{C+r+s+i-2}
\end{equation*}
and hence
\begin{equation}
G_{(r)}^{(a,b)}(x) = \sum^r_{s=0}\frac{(A)_r}{(A)_s}\binom{r}{r-s}\frac{1}{(C+r+s-1)_{r-s}}\Omega_{(s)}
\label{eq_9.21}
\end{equation}

From \eqref{eq_9.24} below it follows that
\begin{eqnarray*}
c_{(1^r)/(1^s)} &=& -\alpha^{r-s}c_{(r)/(s)}(-\alpha C;\alpha^{-1}) \\
&=& \binom{r}{s}\prod_{i=1}^{r-s}\frac{-\alpha}{-\alpha C+ (r+s+i-2)} \\
&=& \binom{r}{s} \prod_{i=1}^{r-s} \frac{1}{C-k(r+s+i-2)}
\end{eqnarray*}
giving
\begin{equation}
G^{(a,b)}_{(1^r)}(x;\alpha) = \sum^s_{r=0}(-1)^s\frac{(A)_{(1^r)}}{(A)_{(1^s)}}\binom{r}{s}\prod^{r-s}_{i=1}(C-k(r+s+i-2)^{-1}\Omega_{(1^s)}
\label{eq_9.21p}
\tag{\ref{eq_9.21}$'$}
\end{equation}

\underline{Duality}
\smallskip

We have
\begin{equation*}
G^{(a,b)}_{\lambda}(x;\alpha) = \sum_{\mu\subset\lambda}(-1)^{|\mu|}\frac{(A;\alpha)_{\lambda}}{(A;\alpha)_\mu}c_{\lambda/\mu}(C;\alpha)\Omega_{\mu}(x;\alpha)
\end{equation*}
and
\begin{eqnarray*}
\Omega_{\mu}(x;\alpha) &=& J_{\mu}(x;\alpha)/J_{\mu}(1_n;\alpha) \\
&=& J_{\mu}(x;\alpha)/\alpha^{|\mu|}(n\alpha^{-1};\alpha)_{\mu}
\end{eqnarray*}
so that
\begin{equation}
G_{\lambda}^{(a,b)}(x;\alpha) = \sum_{\mu\subset\lambda}(-1)^{|\mu|}\frac{(A;\alpha)_{\lambda}}{(A;\alpha)_{\mu}}\cdot \frac{c_{\lambda/\mu}(C;\alpha)}{(n\alpha^{-1};\alpha)_{\mu}}\alpha^{-|\mu|}J_{\mu}(x;\alpha)
\label{eq_9.22}
\end{equation}
and therefore
\begin{equation}
\omega_{\alpha}G_{\lambda}^{(a,b)}(x;\alpha) = \sum_{\mu\subset\lambda}(-1)^{|\mu|}\frac{(A;\alpha)_{\lambda}}{(A;\alpha)_{\mu}}\cdot\frac{c_{\lambda/\mu}(C;\alpha)}{(n\alpha^{-1};\alpha)_{\mu}}J_{\mu'}(x,\alpha^{-1}).
\label{eq_9.23}
\end{equation}
We must therefore express the coefficients in terms of $\lambda'$, $\mu'$.

First of all, we have
\begin{eqnarray*}
(A;\alpha)_{\mu} &=& \prod_{s\in \mu}(A+a'(s) - \alpha^{-1}l'(s)) \\
&=& (-\alpha)^{-|\mu|}\prod_{s\in\mu'}(-A\alpha+a'(s)-\alpha l'(s)) \\
&=& (-\alpha)^{-|\mu|}(-A\alpha;\alpha^{-1})_{\mu'}
\end{eqnarray*}
and likewise
\begin{eqnarray*}
(A;\alpha)_{\lambda} &=& (-\alpha)^{-|\lambda|}(-A\alpha;\alpha^{-1})_{\lambda'}, \\
(n\alpha^{-1};\alpha)_{\mu} &=& (-\alpha)^{-|\mu|}(-n;\alpha^{-1})_{\mu'}.
\end{eqnarray*}

Next, consider $c_{\lambda/\mu} = c_{\lambda/\mu}(C;\alpha)$, which satisfies the recursion
$$ (C|\lambda-\mu|+2\rho(\lambda/\mu;\alpha))c_{\lambda/\mu}(C,\alpha) = \sum_{\nu}\binom{\nu}{\mu}c_{\lambda/\nu}(C,\alpha)$$
summed over $\nu$ such that $\lambda\supset\nu\supset\mu$ and $|\nu-\mu|=1$. Here we have
\begin{eqnarray*}
\rho(\lambda;\alpha) &=& n(\lambda')-\alpha^{-1}n(\lambda) \\
&=& -\alpha^{-1}(n(\lambda)-\alpha n(\lambda')) \\
&=& -\alpha^{-1}\rho(\lambda';\alpha^{-1})
\end{eqnarray*}
so that
$$ C|\lambda-\mu|+2\rho(\lambda/\mu;\alpha) = -\alpha^{-1}(-\alpha C|\lambda'-\mu'|+2\rho(\lambda'/ \mu';\alpha^{-1})).$$
Since moreover $\binom{\nu}{\mu} = \binom{\nu'}{\mu'}$ it follows that 
$$-\alpha^{-1}(-\alpha C|\lambda-\mu|+2\rho(\lambda'/\mu',\alpha^{-1}))c_{\lambda/\mu}(C;\alpha) = \sum_{\nu}\binom{\nu'}{\mu'}c_{\lambda/\nu}(C;\alpha).$$

By comparing this with
$$(-\alpha C|\lambda-\mu|+2\rho(\lambda'/\mu';\alpha^{-1}))c_{\lambda'/\mu'}(-\alpha C,\alpha^{-1}) = \sum_{\nu}\binom{\nu'}{\mu'}c_{\lambda'/\nu'}(-\alpha C;\alpha^{-1})$$
we see that
$$(-\alpha)^{-|\lambda|+|\mu|}c_{\lambda/\mu}(C;\alpha) = c_{\lambda'/\mu'}(-\alpha C;\alpha^{-1})$$
i.e., that
\begin{equation}
c_{\lambda/\mu}(C;\alpha) = (-\alpha)^{|\lambda-\mu|}c_{\lambda'/\mu'}(-\alpha C;\alpha^{-1}).
\label{eq_9.24}
\end{equation}

From these calculations we have $$\frac{(A;\alpha)_{\lambda}}{(A;\alpha)_{\mu}}\cdot\frac{c_{\lambda/\mu}(C;\alpha)}{(n\alpha^{-1};\alpha)_{\mu}} = \frac{(-A\alpha ;\alpha^{-1})_{\lambda'}}{(-A\alpha ;\alpha^{-1})_{\mu'}}\cdot \frac{c_{\lambda'/\mu'}(-C\alpha;\alpha^{-1})}{(-\alpha)^{-|\mu|}(-n;\alpha^{-1})}$$
and therefore from \eqref{eq_9.23}
\begin{eqnarray*}
\omega_{\alpha}G_{\lambda}^{(a,b)}(x;\alpha) 
& = &\sum_{\mu\subset\lambda}(-1)^{|\mu'|} \frac{(-A\alpha ;\alpha^{-1})_{\lambda'}}{(-A\alpha ;\alpha^{-1})_{\mu'}}\cdot 
\frac{c_{\lambda'/\mu'}(-C\alpha;\alpha^{-1})}{(-n;\alpha^{-1})_{\mu'}}\alpha^{|\mu|}J_{\mu'}(-x,\alpha^{-1}) 
\\
&=& G_{\lambda'}^{(a',b')}(-x;\alpha^{-1}),
\end{eqnarray*}
where
$$A'=-A\alpha,\quad C'=-C\alpha,\quad n'\alpha = -n$$
and therefore
$$p' = (n'-1)\alpha+1 = -n-\alpha+1 = -\alpha(\alpha^{-1}(n-1)+1)$$
i.e.,
$$p' = -\alpha p$$
and therefore
$$a' = -\alpha a,\quad b' = -\alpha b.$$
So the duality relation is finally

\begin{equation}
\omega_{\alpha}G_{\lambda,n}^{(a,b)}(x;\alpha) = G_{\lambda',n'}^{(a',b')}(-\alpha,\alpha^{-1}),
\label{eq_9.25}
\end{equation}
where
\begin{equation}
(a',b',n') = (-\alpha a, -\alpha b, -\alpha^{-1}n).
\label{eq_9.26}
\end{equation}

The case $k=0$  $(\alpha = \infty)$.

Here $ u_{a,b}(x) = \prod_{i=1}^n x_i^a (1-x_i)^b;\; p=1$; and
$$
\Omega_{\lambda}(x) = m_{\lambda}(x)/m_{\lambda}(1_n).
$$

For each $\alpha\in \mathbb{N}^n$ define
$$
U_{\alpha}^{(a,b)}(x) = \prod_{i=1}^n G_{\alpha_i}^{(a,b)}(x_i),
$$
a product of Jacobi polynomials of 1 variable. The $U$'s will be pairwise orthogonal for the measure $u_{a,b}(x)\mathrm{d}x$ on $[0,1]^n$, but not symmetric; and $U_{\alpha}^{(a,b)}(x)$ has leading term $(a+1)_{\alpha}\,x^{\alpha}$.

It follows that
$$G_{\lambda}^{(a,b)}(x;\infty) = \frac{1}{n!}\sum_{\omega\in S_n}U_{\omega\lambda}^{(a,b)}(x)$$
with leading term $(a+1)_{\lambda}\,\Omega_{\lambda}(x;\infty)$.

From Rodrigues' formula it follows that
$$
U_{\alpha}^{(a,b)}(x) = u_{a,b}(x)^{-1}D^{\alpha}(x^{\alpha}(1-x)^{\alpha}u_{a,b}(x)),
$$
where $D^{\alpha} = \prod^n_{i=1}D_i^{\alpha_i}$, $D_i = \partial/\partial x_i$,
and hence that
$$
G_{\lambda}^{(a,b)}(x;\infty) = u_{a,b}(x)^{-1}\frac{1}{n!}\sum_{\omega\in S_n}D^{\omega\lambda}(x^{\omega\lambda}(1-x)^{\omega\lambda}u_{a,b}(x)).
$$
This suggests that in general we should define a differential operator $\Phi_{\lambda}$ as follows : if
$$\Omega_{\lambda}(x) = \sum_{\alpha}a_{\lambda\alpha}\,x^{\alpha}$$
then $$\Phi_{\lambda} = \sum_{\alpha}a_{\lambda\alpha}D^{\alpha}\circ(x^{\alpha}(1-x)^{\alpha})$$
i.e., $$\Phi_{\lambda}(f) = \sum_{\alpha}a_{\lambda\alpha}D^{\alpha}(x^{\alpha}(1-x)^{\alpha}f)$$
and a conjectured generalization of Rodrigues' formula:
$$
G_{\lambda}^{(a,b)}(x;\alpha) = u_{a,b}(x)^{-1}\Phi_{\lambda}(u_{a,b}(x)).
$$
This is not the right definition of $\Phi_{\lambda}$. We must replace the $\Omega_{\lambda}$ by the appropriate dual bases to make things work.
\bigskip

\subsection*{Additional observation}

For each standard tableau $T$ of shape $\lambda/\mu$: say
$$T:\quad \lambda = \lambda_0^{(0)}\supset\lambda_1^{(1)}\supset\cdots\supset\lambda_r^{(r)} = \mu \quad\quad (r=|\lambda - \mu|)$$
define
$$f_T(C) = \prod_{i=1}^r \left\{ {\left.\binom{\lambda^{(i-1)}}{\lambda^(i)}\right/\big(iC+2\rho(\lambda/\lambda^{(i)}})\big)\right\}.$$
Then
$$c_{\lambda/\mu}(C) = \sum_Tf_T(C),$$
summed over all standard tableaux $T$ of shape $\lambda/\mu.$

We have then
$$G_{\lambda}^{(a,b)}(x) = \sum_{\mu\subset\lambda}(-1)^{|\mu|}\frac{(a+p)_{\lambda}}{(a+p)_{\mu}}c_{\lambda/\mu}(a+b+2p)\Omega_{\mu}(x).$$
\newpage

\section*{Hermite polynomials} 

Here the measure is
$$e^{-p_2(x)}\Delta(x)^{2k}\mathrm{d}x = e(-x^2)\mathrm{d}\mu(x)$$
on $\mathbb{R}^n$, and the Hermite polynomials $H_{\lambda}(x;\alpha)$ will be eigenfunctions of the differential operator $E$ defined by
$$Ef = e(x^2)\Delta(x)^{-2k}\sum^n_{i=1}D_i(e(-x)^2\Delta(x)^{2k}D_if).$$
Explicitly we find
$$Ef = -2\sum^n_{i=1}x_iD_if+\sum^n_{i=1}D_i^2f + 2k\sum_{i\neq j}\frac{D_if}{x_i-x_j}$$
so that the eigenvalue is $-2|\lambda|$, i.e.,
$$EH_{\lambda} = -2|\lambda|H_{\lambda}$$
Let
$$\Box_2 = \frac{1}{2}\sum^n_{i=1}x^2_iD^2_i + k\sum_{i\neq j}\frac{x_i^2D_i}{x_i-x_j} - k(n-1)\sum x_i D_i$$
$$\varepsilon = \sum^n_{i=1}D_i$$
Then $$E = [\varepsilon,[\varepsilon,\Box_2]] - 2\sum x_iD_i$$
and hence
\begin{eqnarray*}
E\Omega_{\lambda} &=& (\varepsilon^2\Box_2 - 2\varepsilon\Box_2\varepsilon + \Box_2\varepsilon^2 - 2|\lambda|)\Omega_{\lambda} \\
&=& \sum_{\nu\subset\mu\subset\lambda}\binom{\lambda}{\mu}\binom{\mu}{\nu}(\rho(\lambda/\mu)-\rho(\mu/\nu))\Omega_{\nu} - 2|\lambda|\Omega_{\lambda}
\end{eqnarray*}
summed over $\nu\subset\mu\subset\lambda,\;\; |\lambda - \mu| = |\mu - \nu| = 1.$

So if $$H_{\lambda} = \sum_{\mu\subset\lambda}a_{\lambda\mu}\Omega_{\mu}$$
we have the recurrence relation for the coefficients $a_{\lambda\pi}$ :
$$-2|\lambda - \pi|a_{\lambda\pi} = \sum_{\mu\supset\nu\supset\pi}a_{\lambda\mu}\binom{\mu}{\nu}\binom{\nu}{\pi}(\rho(\mu/\nu) - \rho(\nu/\pi))$$
summed over $\mu,\nu$ such that $\lambda\supset\mu\supset\nu\supset\pi$ and $|\mu - \nu| = |\nu - \pi| = 1$.

We may take $a_{\lambda\lambda} = 1$, \& then it is clear from this recurrence relation that
\begin{equation}
a_{\lambda\pi} = 0 \mbox{   unless   } |\lambda - \pi| \mbox{   is \underline{even}}.
\tag{$\ast$}
\label{star}
\end{equation}

For $\alpha=2$ this is in James \cite{James75}.

We have then
$$\int_{\mathbb{R}^n}H_{\lambda}(x)H_{\mu}(x)e(-x^2)\mathrm{d}\mu(x) = 0$$
if $\lambda\neq\mu$, but the value of this integral when $\lambda=\mu$ still remains to be calculated (as in the case of the Jacobi polynomials).

From \eqref{star} it follows that
$$H_{\lambda}(-x) = (-1)^{|\lambda|}H_{\lambda}(x).$$

Let
$$ c_n = \int_{\mathbb{R}^n}e(-y^2)\mathrm{d}\mu(y) = \pi^{n/2}/2^{kn(n-1)/2}$$
from Selberg's integral (\& Stirling's formula). Put
$$F_{\lambda}(x;\alpha)=c_n^{-1}\int e(-y^2)\Omega_{\lambda}(x+iy;\alpha)\mathrm{d}\mu(y)$$
the range of integration being $\mathbb{R}^n$. Then the generating function for the polynomials $F_{\lambda}$ is
$$F(x,z) = \sum_{\lambda}(2\alpha)^{|\lambda|}F_{\lambda}(x)J_{\lambda}^*(z) = c_n^{-1}\int e(-y^2)e(2(x+iy),z)\mathrm{d}\mu(y).$$

Suppose that $\alpha = 2$ (or $1$, or $\frac{1}{2}$...). Since $F(x,z) = F(x,kzk')$,   $(k\in K= O(n))$, we have
\begin{eqnarray*}
F(x,z) &=& c_n^{-1}\int_{\Sigma}e^{-\tr(t^2)}\int_K e^{\tr(2(x+it)kzk')}\mathrm{d}k\mathrm{d}t \\
&=& c_n^{-1}\int_{\Sigma \times K} \exp -\ \tr((t-ikzk')^2+z^2-2xkzk')\mathrm{d}k\mathrm{d}t.
\end{eqnarray*}

If we put $s=t-ikzk'$ and integrate first with respect to $s$, we obtain
$$F(x,z) = \int_K \exp - \tr(z^2)\cdot \exp \tr(2xkzk')\mathrm{d}k$$
i.e.,
$$\boxed{F(x,z) = e(-z^2)e(2x,z).}$$
\smallskip

Since this holds for $\alpha = 2,1,\frac{1}{2}$ we may hope that it holds for all $\alpha$, i.e. that
$$\boxed{\sum_{\lambda}\alpha^{|\lambda|}F_{\lambda}(x)J_{\lambda}^*(z) = e(-z^2)e(k,z).}$$
Next consider
$$\int e(-x^2)F(x,z_1)F(x,z_2)\mathrm{d}\mu(x) = \int e(-x^2)e(-z_1^2)e(-z_2^2)e(2x,z_1)e(2x,z_2)\mathrm{d}\mu(x).$$

Again, when $\alpha = 2$ this is equal to
$$\int_{\Sigma} \exp - \tr(s^2+z_1^2+z_2^2)\int_{K\times K} \exp \tr (2sk_1z_1k_1'+2sk_2z_2k_2')\mathrm{d}k_1\mathrm{d}k_2\mathrm{d}s$$
in which the exponent is
$$-\tr(s^2+z^2_1+z^2_2-2s(k_1z_1k_1'+k_2z_2k_2')) = -\tr((s-k_1z_1k_1' - k_2z_2k_2')^2)+2\tr k_1z_1k_1'k_2z_2k_2'.$$
Integrate first w.r.t. $t=s-k_1z_1k'_1 - k_2z_2k_2'$; we obtain
$$c_n\int_{K\times K}e^{2\tr(z_1k_1'k_2z_2'k_1)}\mathrm{d}k_1\mathrm{d}k_2 = c_n e(2z_1,z_2)$$
so we have (when $\alpha = 2, 1,\frac{1}{2}$)
$$\boxed{\int e(-x^2)F(x,z_1)F(x,z_2)\mathrm{d}\mu(x) = c_n e(2z_1,z_2).}$$

Again let us hope that this formula is valid for all $\alpha$. Then we obtain
$$\sum_{\lambda,\mu}(2\alpha)^{|\lambda|+|\mu|}\left(\int F_{\lambda}(x)F_{\mu}(x)e(-x^2)\mathrm{d}\mu(x)\right) \cdot J_{\lambda}^*(z_1)J_{\mu}^*(z_2) = c_n\sum_{\lambda}(2\alpha)^{|\lambda|}\frac{J_{\lambda}^*(z_1)J_{\lambda}^*(z_2)}{J_{\lambda}^*(1_n)}$$
and hence that
$$\int F_{\lambda}(x)F_\mu(x)e(-x^2)\mathrm{d}\mu(x) = \frac{\delta_{\lambda\mu}c_n}{(2\alpha)^{|\lambda|}J_{\lambda}^*(1_n)}.$$

In other words the $F_{\lambda}$ are, up to a scalar factor, the Hermite polynomials. We normalize them as follows (for compatibility with the case $n=1$)
$$H_{\lambda}(x;\alpha) = 2^{|\lambda|}F_{\lambda}(x;\alpha) = \frac{2^{|\lambda|}}{c_n}\int\Omega_{\lambda}(x+iy)e(-y^2)\mathrm{d}\mu(y)$$
so that we have
$$\int H_{\lambda}(x)H_{\mu}(x)e(-x^2)\mathrm{d}\mu(x) = \frac{\delta_{\lambda\mu}c_n2^{|\lambda|}}{\alpha^{|\lambda|}J_{\lambda}^*(1_n)}$$
in which

\begin{eqnarray*}
\alpha^{|\lambda|}J_{\lambda}^*(1_n)
 &=& \frac{\alpha^{|\lambda|}J_{\lambda}(1_n)}{\langle J_{\lambda},J_{\lambda}\rangle _{\alpha}} \\
&=& \alpha^{|\lambda|}\prod_{s\in\lambda}\frac{n+\alpha a'(s) - l'(s)}{(\alpha a(s)+l(s)+1)(\alpha a(s)+l(s)+\alpha)}
\end{eqnarray*}

The generating function for the $H_{\lambda}$ is
$$
\sum_{\lambda}\alpha^{|\lambda|}H_{\lambda}(x)J_{\lambda}^*(y) = F(x,y) = e(2x,y)e(-y^2).
$$
When $\alpha = 0$, we get
$$\sum_{\lambda}\alpha^{|\lambda|}H_{\lambda}(0)J_{\lambda}^*(y) = e(-y^2)$$
so that $H_{\lambda}(0) = 0$ if $|\lambda|$ is odd, and
$$\sum_{|\lambda|=2m}\alpha^{|\lambda|}H_{\lambda}(0)J_{\lambda}^*(y) = \frac{(-1)^mp_2(y)^m}{m!}$$
from which it follows that
\begin{eqnarray*}
H_{\lambda}(0) &=& \frac{(-1)^m}{m!\alpha^{2m}}\langle J_{\lambda},p_2^m\rangle \\
&=& \frac{(-1)^m2^m}{\alpha^m} \times \mbox{coefficient of $p_2^m$ in $J_{\lambda}.$}
\end{eqnarray*}

But also
$$H_{\lambda}(0) = \frac{2^{|\lambda|}}{c_n}\int e(-y^2)\Omega_{\lambda}(iy)\mathrm{d}\mu(y)$$
so that
\begin{equation*}
\frac{1}{c_n}\int e(-y^2)\Omega_{\lambda}(y)\mathrm{d}\mu(y) =
\left\{
\begin{array}{lll}
\displaystyle
0, \mbox{ if $|\lambda|$ is odd}, \\
\\
(2\alpha)^{-m} \mbox{ (coeffs of $p_2^m$ in $J_{\lambda}$), if $|\lambda| = 2m.$}
\end{array}
\right.
\end{equation*}

So far, all this is proved only for $\alpha=2,1,\frac{1}{2}$.

\noindent\underline{Fourier transform}
\nopagebreak
\smallskip

Consider the integral
$$
J(y,z) = \int e(ix,y)e(-x^2/2)e(2x,z)e(-z^2)\mathrm{d}\mu(x).
$$
Again let us assume that $\alpha = 2$( or 1, or 1/2...). Then this integral is
$$\int_{\Sigma}\int_K\int_K \exp \tr(ixk_1yk_1' - \frac{1}{2}x^2+2xk_2zk_2'-z^2)\mathrm{d}x\,\mathrm{d}k_1\mathrm{d}k_2$$
in which the exponent is
$$-\frac{1}{2} \tr (x^2 - 2ixk_1yk_1' - 4xk_2zk_2' + 2z^2) = -\frac{1}{2} \tr \{ (x-ikyk_1' - 2k_2zk_2')^2 + y^2 - 2z^2 - 4ik_1yk_1'k_2zk_2'\}.$$
Hence
\begin{eqnarray*}
J(y,z) &=& e^{-\frac{1}{2}\tr y^2 + \tr z^2}\int_{K\times K}\left(\int_{\Sigma}e^{-\frac{1}{2}\tr (x-ik_1yk_1'-2k_2zk_2')^2}\mathrm{d}x\right) e^{2i\, \tr yk_1'k_2zk_2'k_1}\mathrm{d}k_1\mathrm{d}k_2 \\
&=& e(-y^2/2)e(z^2)\left(\int e(-x^2/2)\mathrm{d}\mu(x)\right)e(2iy,z)
\end{eqnarray*}
and 
\begin{eqnarray*}
\int e(-x^2/2)\mathrm{d}\mu(x) &=& 2^{np/2}c_n \\
&=& 2^{np/2}\pi^{n/2}/2^{kn(n-1)/2} \\
&=& (2\pi)^{n/2}
\end{eqnarray*}
so that we have
$$
J(y,z) = (2\pi)^{n/2} e(-y^2/2) e(2iy,z)e(z^2).
$$
Since
$$
e(2x,z)e(-z^2) = \sum_{\lambda}\alpha^{|\lambda|}H_{\lambda}(x)J_{\lambda}^*(z),
$$
it follows that
$$\sum_{\lambda}\alpha^{|\lambda|}\left(\int e(ix,y)e(-x^2/2)H_{\lambda}(x)\mathrm{d}\mu(x)\right)J_{\lambda}^*(z) = (2\pi)^{n/2}e(-y^2/2)\sum_{\lambda}\alpha^{|\lambda|}H_{\lambda}(y)J_{\lambda}^*(iz)$$
and hence we have the \underline{Fourier transform formula}
$$\boxed{\int e(ix,y)e(-x^2/2)H_{\lambda}(x)\mathrm{d}\mu(x) = i^{|\lambda|}(2\pi)^{n/2}e(-y^2/2)H_{\lambda}(y).}$$
Again one hopes that this will be true for all values of $\alpha$.

(Since this is linear in $H_{\lambda}$, it will hold for all symmetric polynomials.)

\newpage
\bibliographystyle{plain}
\bibliography{mhbiblio}

\end{document}